\numberwithin{equation}{section}
\numberwithin{figure}{section}
\theoremstyle{plain}
\newtheorem{thm}{\protect\theoremname}[section]
\theoremstyle{definition}
\newtheorem{defn}[thm]{\protect\definitionname}
\theoremstyle{plain}
\newtheorem{lem}[thm]{\protect\lemmaname}
\theoremstyle{plain}
\newtheorem{prop}[thm]{\protect\propositionname}
\theoremstyle{plain}
\newtheorem*{claim*}{\protect\claimname}
\theoremstyle{remark}
\newtheorem{step}{Step}
\newtheorem{remark}[thm]{\protect\remarkname}
\DeclareMathOperator*{\weaklim}{weak\,lim}
\newcommand{\NN}{\mathbb{N}} 
\newcommand{\RR}{\mathbb{R}} 
\newcommand{\HHH}{\mathcal{H}}
\newcommand{\eps}{\varepsilon}
\newcommand{\indic}{1\!\!1} 
\definecolor{pgreen}{rgb}{0.0, 0.5, 0.0}
\title{Scattering for critical radial Neumann waves outside a ball}
\author{Thomas Duyckaerts and David Lafontaine}
\address{Thomas Duyckaerts, LAGA, UMR 7539, Institut Galil\'{e}e, Universit\'{e} Sorbonne Paris Nord, 93430 - Villetaneuse, France}
\email{duyckaer@math.univ-paris13.fr}
\address{David Lafontaine, Department of Mathematical Sciences, University of Bath, Bath, BA2 7AY, United Kingdom}
\email{d.lafontaine@bath.ac.uk}
\thanks{T. Duyckaerts is supported by the Institut Universitaire de France and partially supported by the Labex MME-DII. D. Lafontaine acknowledges support
from EPSRC Grant EP/1025995/1.}
\keywords{Wave equation. Exterior domain. Scattering}
\subjclass[2010]{Primary: 35L70; Secondary: 34B15, 34D05.}
\providecommand{\definitionname}{Definition}
\providecommand{\lemmaname}{Lemma}
\providecommand{\propositionname}{Proposition}
\providecommand{\theoremname}{Theorem}
\providecommand{\remarkname}{Remark}
\providecommand{\claimname}{Claim}
\begin{document}
\maketitle

\begin{abstract}
We show that the solutions of the three-dimensional critical defocusing nonlinear wave equation with Neumann
boundary conditions outside a ball and
 radial initial data scatter. This is to our knowledge the first result of scattering for a nonlinear wave equation with Neumann
boundary conditions. Our proof uses the scheme of concentration-compactness/rigidity introduced by Kenig and
Merle, extending it to our setup, together with 
the so-called channels of energy method  to rule out compact-flow solutions. We also obtain, for the focusing equation,
the same exact scattering/blow-up dichotomy below the energy of the ground-state as in $\mathbb R^3$.
\end{abstract}

\tableofcontents{}

\section{Introduction}
This work concerns the energy-critical wave equation outside an obstacle of $\RR^3$ 
with Neumann boundary condition:
\begin{align}
 \label{NLW}
\partial_t^2u-\Delta u+\iota u^5&=0,\quad \text{in }\Omega\\
\label{NBD}
\partial_nu&=0,\quad \text{in }\partial\Omega\\
\label{ID}
\vec{u}_{\restriction t=0}&=(u_0,u_1)\in \dot{H}^1(\Omega)\times L^2(\Omega),
\end{align}
where $\Omega=\RR^3\setminus K$, $K$ is a compact subset of $\RR^3$ with smooth boundary, $\partial_n u$ is the normal derivative of $u$ on the boundary $\partial \Omega$ of $\Omega$, {$\vec{u}$ denotes $(u, \partial_t u)$,}\ednote{\color{pgreen}à ce stade $\vec{u}$ n'a pas été defini. Comme on l'utilise plusieurs fois dans la suite de l'introduction, j'ai rajouté sa def ici plutôt que de changer dans l'equation} and $\iota\in \{\pm 1\}$. In our main result we will treat the case where $K$ is the unit ball of $\RR^3$ and the initial data $(u_0,u_1)$ is assumed to be radial. 

The equation \eqref{NLW}, \eqref{NBD}, \eqref{ID} is locally well-posed (see \cite{BurqPlanchon09}). The energy
$$\mathscr{E}(\vec{u}(t))=\frac{1}{2}\int_{\Omega} |\nabla u(t,x)|^2dx+\frac{1}{2}\int_{\Omega} |\partial_t u(t,x)|^2dx+\frac{\iota}{6}\int_{\Omega}  u^6(t,x)dx$$
is conserved. When $\iota=1$ (defocusing case), the energy yields a uniform bound of the norm of the solution in $\dot{H}^1\times L^2$ and solutions are expected to be global and to scatter to linear solutions (see definition below). When $\iota=-1$ (focusing case), one can easily construct, using the differential equation $u''=u^5$ and finite-speed of propagation, solutions with initial data in $\dot{H}^1\times L^2$ that blow up in finite time. 

We first consider the defocusing case $\iota=1$. When there is no obstacle ($\Omega=\RR^3$), global existence was obtained for smooth radial data by Struwe \cite{struwe88}, and extended to smooth non-radial data by Grillakis \cite{Grillakis90}. Global existence for data in the energy space was then proved by Shatah and Struwe \cite{ShatahStruwe94}. Bahouri and Shatah \cite{BahouriShatah98} have shown that any solution $u$ to the defocusing equation scatters to a linear solution, i.e. there exists a solution $u_L$ of the free wave equation
\begin{equation}
\label{LW}
\partial_t^2u_L-\Delta u_L=0 
\end{equation} 
on $\RR\times \RR^3$ such that 
$$\lim_{t\to +\infty} \|\vec{u}(t)-\vec{u}_L(t)\|_{\dot{H}\times L^2}=0.$$
The scattering is proved as a consequence of the fact that the $L^6$ norm of the solution goes to $0$, which is obtained by multipliers techniques involving integration by parts on the wave cone $\{|x|\leq |t|\}$.

The equation \eqref{NLW} with Dirichlet boundary condition:
\begin{equation}
\label{DBD}
u_{\restriction \partial \Omega}=0,\quad 
\vec{u}_{\restriction t=0}=(u_0,u_1)\in \dot{H}^1_0(\Omega)\times L^2(\Omega),
\end{equation}
where $\dot{H}^1_0(\Omega)=\{f\in H^1(\Omega),\; f_{\restriction \partial \Omega}=0\}$,
 was studied in several articles. The global well-posedness is proved in \cite{BurqLebeauPlanchon08}. The local well-posedness follows from a local-in-time Strichartz estimate, which is a direct consequence of a spectral projector estimate of Smith and Sogge \cite{SmithSogge07}. The global well-posedness is obtained by the same arguments as in the case without obstacle, observing that the boundary term appearing in the integration by parts can be dealt with a commutator estimate.

 The asymptotic behaviour of equations \eqref{NLW} and \eqref{LW} with Dirichlet boundary conditions \eqref{DBD} is not known in general, and depends on geometrical assumptions on the obstacle. When $K$ is non-trapping, for the linear equation \eqref{LW}, \cite{MorawetzRalston77} proved the exponential decay of the local energy in odd dimensions, polynomial in even dimensions, for compactly supported initial data. A related estimate is the integrability of the local energy, introduced in \cite{MR2001179} 
\begin{equation}
 \label{energy_bound}
\|(\chi u,\chi \partial_t u)\|_{L^2(\RR,H^1\times L^2)}\lesssim_{\chi} \|(u_0,u_1)\|_{\dot{H}^1\times L^2},
\end{equation} 
where $\chi$ is an arbitrary smooth compactly supported function.
In odd space dimensions, the exponential decay of the local energy was first used by \cite{SmithSoggeNonTrapping} to show global-in-time Strichartz estimates. This result was then extended independently to all space dimensions by \cite{MR2001179} and \cite{Metcalfe}. The general argument of Burq \cite{MR2001179} shows that \eqref{energy_bound}, together with the local-in-time Strichartz estimates, imply global Strichartz estimates. When the obstacle $K$ is moreover assumed to be star-shaped, the same computation as in the article of Bahouri and Shatah \cite{BahouriShatah98} yields that any solution scatter to a linear solution. The only difference with the case without obstacle is that boundary terms appear in the integration by parts. The key point is that when $\Omega$ is star-shaped and $u$ satisfies Dirichlet boundary conditions, these boundary terms come with a good sign, so that the proof is still valid in this case. This argument can be extended to illuminated obstacles, that are generalisations of star-shaped obstacles, as done in \cite{Farah1, Farah2} adapting the multiplier so that the boundary term as the right sign, and in \cite{WavesDavid} showing that it decays to zero. However, the case of a general non-trapping obstacle seems at the moment out of hand due to the rigidity of the Morawetz multiplier arguments used for now. 

Much less is known in the case of Neumann boundary conditions. Note that these boundary conditions are more challenging than the Dirichlet boundary conditions, as they do not make sense in the energy space. Also, the strong Huygens principle is lost in this case (see Proposition \ref{prop:linear_group} below).

Local-in-time Strichartz inequalities for the linear wave equation {with Neumann boundary condition} were obtained by Blair, Smith and Sogge \cite{MR2566711}, and global existence for equation \eqref{NLW}-\eqref{NBD} with $\iota=1$ by Burq and Planchon \cite{BurqPlanchon09}. Exponential decay of the local energy in the three-dimensional case was shown by \cite{Morawetz75}. Combined with the local-in-time Strichartz estimates \cite{MR2566711}, this should lead to global in time Strichartz estimates by the arguments of \cite{SmithSoggeNonTrapping}. We give a direct proof of \eqref{energy_bound} (see Proposition \ref{prop:loc_decay}) when the obstacle is the unit ball and the solution is radial, which implies global Strichartz estimates by the main result of
\cite{MR2001179}.

The asymptotic behaviour of the solutions\ednote{Thomas: pour homog\'en\'eiser nos notations, j'ai remplac\'e ``non-linear'' par ``nonlinear'' partout (on utilisait les deux avec un peu plus de ``nonlinear''} of the nonlinear equation \eqref{NLW}-\eqref{ID} was to our knowledge  not {previously} investigated. 
Assuming the global Strichartz estimates for the linear wave equation, the proof of scattering in \cite{BahouriShatah98} does not work anymore since the boundary terms appearing in the integration by parts do not seem to have any specific signs and cannot be controlled.

The main result of this article  is that the scattering to a linear solution holds for the defocusing wave equation with Neumann boundary conditions, when $K$ is the unit ball of $\RR^3$ and $(u_0,u_1)$ is radially symmetric. We thus consider the equation
\begin{align}
 \label{NLWrad}
\partial_t^2u-\Delta u+u^5&=0,\quad \text{in }\RR^3\setminus B(0,1)\\
\label{NBDrad}
\partial_ru&=0,\quad \text{for }r=1\\
\label{IDrad}
\vec{u}_{\restriction t=0}&=(u_0,u_1)\in \mathcal{H}(B^c),
\end{align}
where $B(0,1)$ is the unit ball of $\RR^3$ and $\mathcal{H}(B^c)$ is the space of radial functions in $(\dot{H}^1\times L^2)\left( \RR^3\setminus B(0,1) \right)$, and the corresponding linear wave equation:
\begin{align}
 \label{LWrad}
\partial_t^2u_L-\Delta u_L&=0,\quad \text{in }\RR^3\setminus B(0,1).
\end{align}
with the boundary condition \eqref{NBDrad}.

\begin{thm}
\label{thm:main}
 Let $u$ be a solution of \eqref{NLWrad} with Neumann boundary condition \eqref{NBDrad} and initial data \eqref{IDrad}. Then $u$ is global and there exists a solution $u_L$ of \eqref{LWrad}, \eqref{NBDrad}, with initial data in $\HHH {(B^c)}$, such that 
$$\lim_{t\to +\infty} \|\vec{u}(t)-\vec{u}_L(t)\|_{\HHH {(B^c)}}=0.$$ 
\end{thm}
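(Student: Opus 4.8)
The plan is to follow the Kenig--Merle concentration-compactness/rigidity scheme, adapted to the exterior-of-a-ball setting with Neumann boundary conditions. First I would establish the necessary linear theory on $\Omega = \RR^3\setminus B(0,1)$ for radial data: global Strichartz estimates for \eqref{LWrad}, \eqref{NBDrad}. As the excerpt indicates, these follow by combining the local-in-time Strichartz inequalities of Blair--Smith--Sogge with the local energy decay \eqref{energy_bound} (Proposition~\ref{prop:loc_decay}) via Burq's argument from \cite{MR2001179}. With Strichartz estimates in hand, the Cauchy theory gives local well-posedness, a small-data scattering result, a perturbation/long-time stability lemma, and a characterization of the maximal time of existence in terms of a scattering norm (e.g.\ an $L^5_tL^{10}_x$-type norm); these are the standard ingredients and should transfer with only notational changes, the main care being that the Neumann condition does not sit naturally in the energy space, so one works with the flow of the Neumann Laplacian directly.

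Next, assuming Theorem~\ref{thm:main} fails, the concentration-compactness step produces a \emph{critical element}: a nonzero global solution $u_c$ whose trajectory $\{\vec u_c(t): t\in\RR\}$ is precompact in $\HHH(B^c)$ (after modulation by the scaling symmetry — note that spatial translations are not available here because of the obstacle, which actually simplifies the modulation analysis). This requires a profile decomposition adapted to the Neumann-exterior linear group. The key structural point is that any sequence of radial linear solutions, when concentrating at scales $\lambda_n\to 0$ or at points escaping to infinity, looks (after rescaling) like a free wave on all of $\RR^3$, because the obstacle disappears in the limit; the profiles living "at the obstacle scale" are the genuinely new objects. One then runs the usual Bahouri--Gérard orthogonality argument together with the nonlinear profile decomposition and a Palais--Smale-type extraction to obtain $u_c$, using the perturbation lemma to propagate approximations.

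The rigidity step — showing $u_c\equiv 0$, a contradiction — is where I expect the main difficulty, and where the paper departs from the Morawetz-multiplier approach that fails for Neumann conditions (as noted in the introduction). The plan is to use the channels of energy method: for the free radial wave equation outside a ball, one shows that a nonzero solution must send a fixed fraction of its energy to infinity through the exterior light cone $\{|x|\geq R+|t|\}$ in either the forward or backward time direction — an exterior energy lower bound analogous to the $\RR^3$ estimates of Duyckaerts--Kenig--Merle. Here the radial reduction $w(t,r)=ru(t,r)$ turns the equation into a $1$D wave equation on $r\geq 1$ with the Neumann condition becoming a Robin-type condition $\partial_r w - w/r = 0$ at $r=1$; one analyzes the explicit solution formula (d'Alembert plus reflected wave), showing the reflection coefficient does not allow cancellation of the outgoing channel for generic data. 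A compact-flow solution, by contrast, cannot radiate energy to infinity, so matching it against this exterior lower bound forces $\vec u_c(0)=0$. The delicate points will be (i) proving the channels-of-energy estimate uniformly despite the Neumann/Robin reflection at $r=1$ and the loss of the strong Huygens principle, and (ii) handling the one exceptional case where the exterior estimate degenerates (the analogue of the $(0,-\frac{1}{r})$-type obstruction in the $\RR^3$ theory), which must be excluded separately using the compactness of the flow and the conservation of energy.

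Finally, once scattering as $t\to+\infty$ is established for all data, one records that the same argument applies as $t\to-\infty$, and that the defocusing sign $\iota=1$ was used only through the a priori energy bound on $\|\vec u(t)\|_{\HHH}$, which is what makes the concentration-compactness machinery closed; the focusing sub-ground-state dichotomy mentioned in the abstract would follow from the same scheme with the variational characterization of the ground state replacing the global energy bound.
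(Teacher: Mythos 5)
Your overall scheme (global Strichartz from local energy decay plus Blair--Smith--Sogge via Burq's argument, profile decomposition, extraction of a critical element, rigidity via channels of energy) is the scheme of the paper, and the first two steps match it, up to two slips worth flagging: in the radial exterior setting the profiles that converge to free $\RR^3$ waves are the \emph{dilating} ones ($\lambda_n\to\infty$), while concentrating profiles ($\lambda_n\to 0$) would have to sit at the origin, inside the obstacle, and are in fact absent (this is Lemma \ref{lem:el_conc_0}); and since scaling is broken by the obstacle and the dilating profiles are eliminated using the known scattering of the defocusing quintic equation on $\RR^3$ (a fact you never invoke explicitly but which is essential), the critical element is precompact in $\HHH(B^c)$ with \emph{no} scaling modulation.

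The genuine gap is in your rigidity step. First, you propose to prove an exterior energy lower bound for the Neumann flow itself by analyzing the Robin-type reflection of $w=ru$ at $r=1$; the paper never needs this, because for $R\geq 1$ the region $\{r\geq R+|t|\}$ never meets the boundary, so by finite speed of propagation one can compare $u_c$ there with an auxiliary solution of the equation on all of $\RR^3$ (data truncated at $r=R$ and extended by the constant $u_0(R)$) and apply the free radial channel estimate (Proposition \ref{prop:channels}); your reflection analysis is at best unnecessary and at worst leads you into regions where the lower bound is unclear. Second, and more seriously, the channel estimate does \emph{not} ``force $\vec u_c(0)=0$'', and the degenerate direction cannot be ``excluded separately using the compactness of the flow and the conservation of energy''. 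The degenerate data are exactly the tails $u_0\sim \ell/r$ (for which $\partial_r(ru_0)=0$), and these are genuinely realized by solutions: the singular radial stationary solutions $Z_{\ell}$ of the \emph{defocusing} elliptic equation $\Delta Z=Z^5$ on exterior regions (Proposition \ref{prop:Z}). The paper's argument therefore has to continue past the channel estimate: a dyadic bootstrap gives $|ru_0(r)-\ell|\lesssim r^{-2}$; if $\ell=0$ one concludes $u_c\equiv 0$, while if $\ell\neq 0$ one shows $(u_0,u_1)$ coincides with $(Z_{\ell},0)$ for large $r$, propagates this inward, and reaches a contradiction either with the blow-up of $Z_{\ell}$ at $r=\mathfrak{z}\ell^2$ or with the Neumann boundary condition at $r=1$, since $Z_{\ell}'(1)\neq 0$. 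Without this elliptic comparison step --- i.e.\ without using the non-existence of nonzero radial $\dot H^1$ stationary solutions with Neumann boundary condition --- the rigidity argument does not close, because energy conservation and compactness alone cannot rule out the $\ell/r$ behaviour.
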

Our proof uses {and extends} the by now standard compactness/rigidity scheme introduced by Kenig and Merle in \cite{KeMe06}, \cite{KeMe08} to study the focusing energy-critical Schr\"odinger and wave equations on $\RR^N$. 
The compactness step consists in constructing, in a contradiction argument, a global nonzero solution $u_c$ of \eqref{NLWrad}, \eqref{NBDrad} such that 
$$\{\vec{u}_c(t),\; t\in \RR\}$$
has compact closure in $\HHH$. The essential tool of this construction is a profile decomposition (in the spirit of the one introduced by Bahouri and G\'erard \cite{BahouriGerard99} on the whole space), describing the defect of compactness of the Strichartz inequality $\|u_L\|_{L^5(\RR,L^{10})}\lesssim \|(u_0,u_1)\|_{\HHH}$ for solutions of \eqref{LWrad}, \eqref{NBDrad}. We construct this profile decomposition, which is new for the wave equation with Neumann boundary conditions,  in Section \ref{section:profiles}. In this decomposition, the linear wave equation on the whole space appears as a limiting equation for dilating profiles, as shown in Section \ref{section:dilating}. The knowledge of the fact that any solution of the defocusing equation on the whole space scatters is essential to rule out these profiles and obtain the critical solution $u_c$, constructed in Section \ref{section:critical}.

The second step of the proof (the rigidity argument), carried out in Section \ref{section:rigidity} consists in ruling out the existence of the critical solution. Since no monotonicity formula is available due to the Neumann boundary condition, we use the channels of energy method introduced in \cite{DuKeMe11a}, \cite{DuKeMe13} to classify solutions of the focusing energy-critical wave equation on $\RR^3$. Using this method, we prove that $u_c$ must be independent of time, a contradiction with the well-known fact that there is no stationary solution of \eqref{NLWrad} with boundary conditions \eqref{NBDrad} in $\dot{H}^1$. This idea was first used in the context of the supercritical wave equation in \cite{DuKeMe12c}. 

Our method also gives scattering for solutions of the focusing wave equation:
\begin{equation}
 \label{NLWrad_foc}
\partial_t^2u-\Delta u-u^5=0,\quad \text{in }\RR^3\setminus B(0,1),
\end{equation}
with Neumann boundary condition \eqref{NBDrad}
 below a natural energy threshold. Let $W$ be the ground state of the equation on $\RR^3$:
\begin{equation*}
 W=\left( 1+|x|^2/3 \right)^{-1/2}
\end{equation*}
and recall that $W$ is (up to scaling and sign change), the unique radial, stationary solution of $-\Delta W=W^5$ \cite{Pohozaev65, GiNiNi81}. Denote by $\mathscr{E}_{\RR^3}(W,0)$ the energy\ednote{Thomas: dans la suite j'ai not\'e toutes les \'energies non-lin\'eaires avec un $\mathscr{E}$ pour \^etre coh\'erent avec la notation du chapitre $5$. Je n'ai pas indiqu\'e ces changements} of the solution $(W,0)$ on the whole space $\RR^3$:
\begin{equation*}
\mathscr{E}_{\RR^3}(W,0):=\frac{1}{2}\int_{\RR^3}|\nabla W|^2-\frac{1}{6}\int_{\RR^3} W^6. 
\end{equation*} 
Then we have the following:
\begin{thm}
\label{thm_foc1}
 Let $u$ be a solution of \eqref{NLWrad_foc}, \eqref{NBDrad} with initial data \eqref{IDrad}. Assume
$$ \mathscr{E}(u_0,u_1)<\mathscr{E}_{\RR^3}(W,0),\quad \int_{\RR^3\setminus B(0,1)} |\nabla u_0(x)|^2dx<\int_{\RR^3} |\nabla W(x)|^2dx.$$
Then $u$ is global,
$$\forall t\in \RR, \quad \int_{\RR^3\setminus B(0,1)}  |\nabla u(t,x)|^2dx<\int_{\RR^3} |\nabla W(x)|^2dx,$$
and $u$ scatters to a linear solution.
\end{thm}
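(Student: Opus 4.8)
The plan is to deduce Theorem~\ref{thm_foc1} from the defocusing result (Theorem~\ref{thm:main}) together with the ingredients already assembled in the paper, essentially by rerunning the concentration-compactness/rigidity scheme in the focusing, sub-ground-state regime. First I would establish the variational characterization of the energy threshold: by the sharp Sobolev inequality on $\RR^3$ (whose extremizers are the rescalings of $W$), for radial $\dot H^1$ functions on $B^c$, extended by reflection or simply viewed inside $\dot H^1(\RR^3)$, one has that if $\int_{B^c}|\nabla u_0|^2<\int_{\RR^3}|\nabla W|^2$ and $\mathscr E(u_0,u_1)<\mathscr E_{\RR^3}(W,0)$ then the ``energy trapping'' holds: the sign of $\int |\nabla u_0|^2-\int u_0^6$ is preserved, the nonlinear energy is comparable to $\|(u_0,u_1)\|_{\HHH}^2$, and in particular $\mathscr E\ge 0$ on this set. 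The key point is that the obstacle only removes mass, so the Sobolev constant on $B^c$ is no better than on $\RR^3$ and the strict inequality is stable; this is exactly the Kenig--Merle dichotomy argument of \cite{KeMe08}, localized to $B^c$, and it gives the uniform $\dot H^1\times L^2$ bound that makes the problem behave like the defocusing one.

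Next I would run the concentration-compactness argument verbatim from Sections~\ref{section:profiles}--\ref{section:critical}: the profile decomposition for the linear Neumann flow, the nonlinear perturbation/stability theory, and the extraction of a critical element. The only change from the defocusing proof is bookkeeping with the focusing nonlinearity, but since we stay strictly below $\mathscr E_{\RR^3}(W,0)$ in energy and below $\|\nabla W\|_{L^2}^2$ in gradient, every profile (in particular the dilating profiles, whose limiting equation is the wave equation on all of $\RR^3$ from Section~\ref{section:dilating}) inherits the strict sub-threshold condition, so the known scattering for the focusing equation below the ground state energy on $\RR^3$ (Kenig--Merle \cite{KeMe08}) rules out those profiles just as defocusing-$\RR^3$ scattering did before. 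This produces, if scattering fails, a nonzero global solution $u_c$ with precompact trajectory in $\HHH$ satisfying the same sub-threshold bounds.

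Finally I would invoke the rigidity step of Section~\ref{section:rigidity}: the channels-of-energy argument there shows any such compact-flow solution is independent of time, hence a stationary radial $\dot H^1$ solution of $-\Delta u=\iota u^5$ on $B^c$ with $\partial_r u|_{r=1}=0$; I should check that the argument in Section~\ref{section:rigidity} is written (or can be trivially adapted) for both signs $\iota=\pm1$, which is plausible since the channels-of-energy exterior estimates for the free wave equation are sign-blind and the nonlinear term is treated perturbatively on the compact trajectory. A Pohozaev/integration-by-parts identity on $B^c$ then forces $u\equiv0$: multiplying $\Delta u=-u^5$ (focusing) by $x\cdot\nabla u$ and integrating on $\{1<|x|<R\}$, the boundary term at $r=1$ is controlled by the Neumann condition and radiality, and the strict inequality $\|\nabla u\|_{L^2(B^c)}^2<\|\nabla W\|_{L^2(\RR^3)}^2$ excludes a nonzero solution exactly as in the rigidity for $\RR^3$, giving the contradiction. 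Global existence and the preserved gradient bound then follow from the energy trapping established in the first step.

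I expect the main obstacle to be the variational/energy-trapping step on the exterior domain with Neumann conditions: one must be careful that restricting $W$-type extremizers to $B^c$ and comparing Sobolev constants really does preserve strict inequalities, that the set $\{\|\nabla u_0\|_{L^2(B^c)}^2<\|\nabla W\|_{L^2(\RR^3)}^2\}$ is invariant under the focusing flow (which uses continuity of the flow in $\HHH$ plus the energy inequality, but the Neumann boundary term needs to not interfere), and that the nonexistence of nonzero stationary solutions in the rigidity step survives the sub-threshold constraint rather than being an unconditional statement as in the defocusing case. The concentration-compactness machinery itself should transfer with only cosmetic changes once the threshold is correctly set up.
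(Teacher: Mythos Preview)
Your proposal is essentially correct and follows the paper's strategy: Proposition~\ref{prop:trapping} gives the energy trapping via the extension operator $\vec{\mathcal P}$ to $\RR^3$ (your ``simply viewed inside $\dot H^1(\RR^3)$'' is exactly this), global existence then follows from the radial Sobolev bound (Remark~\ref{R:global}), and the concentration-compactness machinery of Sections~\ref{section:dilating}--\ref{section:critical} carries over with Kenig--Merle's $\RR^3$ scattering result \cite{KeMe08} replacing defocusing $\RR^3$ scattering for the dilating profiles.

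The one substantive difference is in the final rigidity step. The paper's rigidity (Theorem~\ref{thm:rigidity_foc}) is \emph{unconditional}: it reruns the channels-of-energy argument of Section~\ref{section:rigidity} with the explicit family $W_\ell(x)=\tfrac{\sqrt3}{\ell}W(3x/\ell^2)$ in place of $Z_\ell$ in Steps~\ref{step:Zell}--\ref{step:contradiction}, identifies the compact solution as $W_\ell$ on all of $B^c$, and then simply checks from the formula that $\partial_r W_\ell(1)\neq0$, so the Neumann condition is violated. Your proposed ending---rule out the stationary solution by a Pohozaev/variational argument under the sub-threshold bound---also works in principle (multiplying $-\Delta u=u^5$ by $u$ and using sharp Sobolev on the extension forces $\|\nabla u\|_{L^2(B^c)}^2\geq\|\nabla W\|_{L^2(\RR^3)}^2$ for $u\not\equiv0$), but two cautions: first, to even conclude ``$u$ is stationary'' you must still carry out the comparison with $W_\ell$ in the channels argument, at which point the paper's direct boundary check is shorter; second, the genuine Pohozaev identity is degenerate at the critical exponent, so it is the Nehari-type identity $\int_{B^c}|\nabla u|^2=\int_{B^c}u^6$ combined with Sobolev that does the work, not Pohozaev. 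The paper's route buys the stronger unconditional rigidity statement; yours is sufficient for Theorem~\ref{thm_foc1} but not more.
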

Finally, we have exactly the same dichotomy as in $\RR^3$ for the solutions below the energy threshold $\mathscr{E}_{\RR^3}(W,0)$, indeed, with the same proof as in \cite{KeMe08}, one obtains:
\begin{thm}
\label{thm_foc2}
 Let $u$ be a solution of \eqref{NLWrad_foc}, \eqref{NBDrad} with initial data \eqref{IDrad}. Assume
$$ \mathscr{E}(u_0,u_1)<\mathscr{E}_{\RR^3}(W,0),\quad \int_{\RR^3\setminus B(0,1)} |\nabla u_0|^2dx>\int_{\RR^3} |\nabla W|^2dx.$$
Then $u$ blows up in finite time.
\end{thm}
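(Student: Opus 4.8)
The statement is the blow-up counterpart of Theorem~\ref{thm_foc1}, and --- as the authors announce --- the plan is to transcribe, essentially verbatim, the concavity argument that Kenig and Merle use on $\RR^3$ in \cite{KeMe08}. It rests on two ingredients: the variational (``trapping'') estimates near the ground state $W$, and a convexity argument of Levine type for a virial-type quantity. The point worth stressing is that, in contrast with the rigidity step for the defocusing problem, the Neumann condition \eqref{NBDrad} is here \emph{harmless}: the only boundary term produced by the integration by parts below is $\int_{\{r=1\}}u\,\partial_r u\,d\sigma$, which vanishes identically by \eqref{NBDrad} --- it is not merely of a favourable sign, it is zero.

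First I would collect the variational facts on $\Omega:=\RR^3\setminus B(0,1)$, obtained by the same analysis as in the proof of Theorem~\ref{thm_foc1}. The key input is that, for radial $f$, the Sobolev inequality $\|f\|_{L^6(\Omega)}\le C_3\|\nabla f\|_{L^2(\Omega)}$ holds on $\Omega$ with the \emph{same} sharp constant $C_3$ as on $\RR^3$ (extend a radial $f$ to $\RR^3$ by the constant $f(1)$ inside the ball, which leaves $\|\nabla f\|_{L^2}$ unchanged and can only increase $\|f\|_{L^6}$), and is not attained on $\Omega$. Since $\int_{\RR^3}|\nabla W|^2=3\,\mathscr{E}_{\RR^3}(W,0)$, combining this with the conservation of $\mathscr{E}$, the hypotheses of the theorem, and a continuity argument along the maximal interval of existence produces a constant $\mu>0$ such that
$$\int_\Omega|\nabla u(t)|^2>\int_{\RR^3}|\nabla W|^2,\qquad 4\int_\Omega|\nabla u(t)|^2-12\,\mathscr{E}(u_0,u_1)\ \ge\ \mu$$
for every $t$ in the maximal interval.

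Then comes the concavity argument. By finite speed of propagation for the exterior Neumann problem, together with a routine truncation/approximation of the data (exactly as in \cite{KeMe08}), one may assume $(u_0,u_1)$ supported in a bounded annulus $\{1<|x|<R_0\}$, so that $u(t)$ stays supported in $\{1\le|x|\le R_0+|t|\}$ and $y(t):=\int_\Omega u(t,x)^2\,dx$ is finite; moreover $y(t)>0$, because $\int_\Omega|\nabla u(t)|^2>0$ by the variational estimate. Differentiating $y$ twice, using \eqref{NLWrad_foc}, integrating by parts (the boundary term $\int_{\{r=1\}}u\,\partial_r u\,d\sigma$ vanishing by \eqref{NBDrad}), and eliminating $\int_\Omega u^6$ via the conserved energy, one gets
$$y''(t)=8\int_\Omega(\partial_t u)^2\,dx+\Big(4\int_\Omega|\nabla u|^2\,dx-12\,\mathscr{E}(u_0,u_1)\Big),$$
both summands being nonnegative, the second being $\ge\mu$ by the preceding step. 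Hence $y''\ge\mu>0$, so $y$ would tend to $+\infty$ were the solution global on $[0,+\infty)$; on the other hand $y''\ge 8\int_\Omega(\partial_t u)^2\,dx$ and the Cauchy--Schwarz inequality $(y')^2\le 4\,y\int_\Omega(\partial_t u)^2\,dx$ together give $(y')^2\le\frac{1}{2}\,y\,y''$, so that $1/y$ is concave and positive on $[0,+\infty)$, hence nondecreasing, hence $y$ is bounded --- a contradiction. Therefore $u$ cannot be global forward in time, and, by the same argument backward, $u$ blows up in finite time.

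I do not expect a genuine obstacle: the proof is essentially a copy of \cite{KeMe08}, and the only two places where the exterior domain enters --- the sharp Sobolev inequality on $\Omega$ with unchanged constant and no extremiser, and the finite-speed-of-propagation reduction to compactly supported data in the presence of the obstacle --- are both routine, and the first is in any case already needed for Theorem~\ref{thm_foc1}. The substantive remark, if any, is the one highlighted above: the virial identity, which for the defocusing rigidity argument is the very source of the difficulties with the Neumann condition, here produces a boundary term that is simply zero.
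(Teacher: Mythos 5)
Your variational step and your virial computation coincide with the paper's: the trapping estimate is exactly Proposition \ref{prop:trapping} (proved there via the extension operator $\vec{\mathcal P}$ and the variational characterization of $W$ on $\RR^3$), the identity $y''=8\int(\partial_t u)^2+4\int|\nabla u|^2-12\mathscr{E}(u_0,u_1)\ge 8\int(\partial_tu)^2+\delta_0$ with the Neumann boundary term vanishing is the displayed computation in the paper, and the Levine-type concavity endgame (which the paper delegates to \cite{KeMe08}, p.~165) is a valid variant. The gap is in your opening reduction: you claim that ``by finite speed of propagation, together with a routine truncation/approximation of the data (exactly as in \cite{KeMe08}), one may assume $(u_0,u_1)$ supported in a bounded annulus.'' This is not what \cite{KeMe08} does, and it is not a legitimate reduction. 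Truncating the data produces a \emph{different} solution $\tilde u$; its finite-time blow-up does not transfer to the original $u$ unless the blow-up occurs inside the region $\{|x|\le R_0-|t|\}$ where the two solutions agree, and the concavity argument only bounds the blow-up time of $\tilde u$ in terms of $y(0)=\int \tilde u_0^2$ and $y'(0)$, which for radial data that are merely in $\dot H^1\times L^2$ (so $u_0\notin L^2$ in general, with decay as slow as $r^{-1/2}$) grow like $R_0^2$ and $R_0$ as the truncation radius $R_0\to\infty$. So you cannot guarantee the blow-up happens in the domain of coincidence, and no contradiction with the (assumed global) original solution is obtained; one also cannot rule out that the blow-up of $\tilde u$ is driven by the region influenced by the cut-off, where small-data theory does not apply because that region is also influenced by the non-small part of the data.

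The paper's route avoids this entirely: first it treats $u_0\in H^1(B^c)=\dot H^1\cap L^2$ (note that $u_0\in L^2$ suffices for $y(t)$ to be finite --- compact support is never needed), where your argument is correct; then, for general $u_0\in\dot H^1(B^c)$, it keeps the original solution and replaces $y(t)$ by a \emph{localized} virial quantity, the new error terms coming from the cut-off being controlled by finite speed of propagation and the smallness of the tail of the data, exactly as in \cite[pp.~205--206]{KeMe08}. To repair your proof, either restrict the first step to $u_0\in L^2$ and then carry out this localized-virial argument for the general case, or provide a genuine justification of the transfer of blow-up from the truncated solution to $u$ --- which, for the reasons above, is not routine.
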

Noting that by variational arguments (see Proposition \ref{prop:trapping}), using that $W$ is a maximizer to the critical Sobolev inequilality on $\RR^3$, one cannot have $\mathscr{E}(u_0,u_1)<\mathscr{E}_{\RR^3}(W,0)$ and $\int_{\{|x|>1\}} |\nabla u_0|^2=\int_{\RR^3}|\nabla W|^2$, we see that Theorems \ref{thm_foc1} and  \ref{thm_foc2} describe all solutions of \eqref{NLWrad_foc}, \eqref{NBDrad} such that $\mathscr{E}(u_0,u_1)<\mathscr{E}_{\RR^3}(W,0)$. Let us also mention that Theorems \ref{thm_foc1} and \ref{thm_foc2} cannot be generalized to non-symmetric solutions or other domains than the exterior of a ball, see Subsection \ref{subs:geom}.

We finally give a few more references related to this problem. The study of the linear wave equation outside an obstacle was initiated by Morawetz in \cite{Morawetz61}, and considered in the 60's and 70's by Lax and Phillips,  Morawetz,  Ralston and Strauss, and many other contributors: for an extensive discussion, see for example \cite{MorawetzRalston77} and references therein.

For resolvent estimates in general non-trapping geometries, leading in particular to (\ref{energy_bound}), see \cite{Burq02} and references therein. For a general discussion about local energy decay estimates, one can look at the recent paper \cite{BoucletBurq}.

The focusing nonlinear wave equation with a superquintic nonlinearity outside the unit ball of $\RR^3$  with Dirichlet boundary conditions was considered in \cite{DuyckaertsYang19P}.

The nonlinear Schr\"odinger equation outside a non-trapping obstacle with Dirichlet boundary conditions was first considered in \cite{MR2068304}. The scattering for the three-dimensional defocusing cubic Schr\"odinger equation outside a star-shaped obstacle was shown by Planchon and Vega \cite{PV09}, and by the same authors \cite{PlanchonVega12} for the analogous equation in two space dimensions. The energy-critical case outside a strictly convex obstacle in dimension three was treated in \cite{KillipVisanZhang16}. A scattering result for a nonlinear Schrödinger equation in a model case of weakly trapping geometry can be found in \cite{David2pot}. To our knowledge, there is no work on the nonlinear Schrödinger equation outside an obstacle with Neumann boundary conditions.

\subsection*{Notations} We will use the following notations:
\begin{itemize}
\item If $u$ is a function of time and space, $\vec u$ is understood to be $(u,\partial_t u)$.\ednote{Thomas:j'ai d\'eplac\'e cet item, qui \'etait \`a la fin, ici. Pour homog\'en\'eiser, j'ai mis un verbe \`a chaque phrase, et un point \`a la fin de chaque item.}
\item {Conversely, if $\vec u \in \mathcal H(B^c)$, $u$ is understood to be the first component of $\vec u$.}
\item $B(0,R)$ is the ball centered in $0$ of radius $R$, $B=B(0,1)$, $B^c := \mathbb R ^3 \backslash B(0,1)$ is the domain
we are interested in.
\item $S_{\mathbb{R}^{3}}$ and $S_{N}$ are the linear flow of the wave
equation respectively in $\mathbb{R}^{3}$ and in $B^c$
with Neumann boundary condition. If $(u_0,u_1)$ is the initial data
we will denote { by} $S_N(t)(u_0,u_1)$ or $\big( S_N(u_0,u_1)\big)(t)$ the solution of \eqref{LWrad}, \eqref{NBDrad}, \eqref{IDrad} at time $t$, and $\big( S_N(u_0,u_1)\big)(t,r)$ the solution at time $t$, location $x=|r|$. We use similar notations for $S_{\RR^3}$, and the flows $\mathscr{S}_N$ and $\mathscr{S}_{\RR^3}$ defined below. 
The arrowed versions $\vec{S}_{\mathbb{R}^{3}}$
and $\vec{S}_{N}$ denote the flows together with their
first temporal derivative.
\item $\mathscr{S}_{\mathbb{R}^{3}}$ and $\mathscr{S}_{N}$ are the corresponding
nonlinear flows for the defocusing energy critical wave equation (\ref{NLW}).
\item { We will make the following convention: if $(u_0,u_1)\in \HHH(\RR^3)$, $S_N(t)(u_0,u_1)$ and $\mathscr{S}_N(t)(u_0,u_1)$ will denote the flows applied to the restriction of $(u_0,u_1)$ to $B^c$.}
\item {Throughout the paper, we often deal with solutions of linear and nonlinear equations both in $B^c$ 
with Neumann boundary conditions and in $\mathbb R^3$. In such situations, the letter $u$ has been chosen for the Neumann solutions, whereas $v$ stands for $\mathbb R^3$ solutions.}
\item $L^{p}L^{q}:=L^{p}(\mathbb{R},L^{q}(B^c))$.
\item  $\dot{H}^{1}(B^c)$ is the space of radial functions $f\in L^6(B^c)$ such that $|\nabla f|\in L^2(B^c)$.
\item $\mathcal{H}(B^c)$ is
the space of radial functions in $\dot{H}^1(B^c)\times L^{2}(B^c)$.
\item Finally, $\mathcal{H}(\mathbb R ^3)$ is
the space of radial functions in $\dot{H}^{1}(\RR^3)\times L^{2}(\mathbb{R}^{3})$.
\end{itemize}

\section{Preliminaries}

\subsection{The functional setting}

\begin{defn} \label{def:prolong}
We define the extension operator $\mathcal{P}$ from
$\dot{H}^1(B^c)$ to $\dot{H}_{\text{rad}}^{1}(\mathbb{R}^{3})$
by
$$
\mathcal{P}u(r):=\begin{cases}
u(r) & r\geq1,\\
u(1) & r<1,
\end{cases}
$$
which is well-defined since by the radial Sobolev embedding, for $u \in \dot{H}^1(B^c)$, the function $r\mapsto u(r)$ is continuous on $[1,\infty)$. Similarly, we define the extension operator ${\vec {\mathcal P}}$ from $\mathcal H$ to $\mathcal H(\mathbb R ^3)$ by
$$
\mathcal{\vec {\mathcal P}}(f,g)(r):=\begin{cases}
(f(r),g(r)) & r\geq1,\\
(f(1), 0) & r<1.
\end{cases}
$$

\end{defn}

Let us recall that:
\begin{lem}
\label{lem:u(1)HN}For $u\in\dot{H}^1(B^c)$ we have
\begin{equation}
\int_{1}^{\infty}u(r)^{2}dr\leq4\int_{1}^{\infty}u'(r)^{2}r^{2}dr,\label{eq:hardy}
\end{equation}
in particular, for any compact $K\subset B^c$
\begin{equation}
\Vert u\Vert_{L^{2}(K)}\lesssim\Vert u\Vert_{\dot{H}^{1}}.\label{eq:L^2_comp}
\end{equation}
Moreover
\begin{equation}
|u(1)|\lesssim\Vert u\Vert_{\dot{H}^1(B^c)}.\label{eq:u(1)}
\end{equation}
\end{lem}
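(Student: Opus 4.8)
The statement to prove is Lemma \ref{lem:u(1)HN}, consisting of the Hardy-type inequality \eqref{eq:hardy}, its consequence \eqref{eq:L^2_comp}, and the trace bound \eqref{eq:u(1)}.

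The plan is to prove \eqref{eq:hardy} first, since the other two estimates follow from it together with the radial Sobolev embedding. For a smooth compactly supported radial function $u$ on $B^c$, write $u(r)^2 = -\int_r^\infty \frac{d}{ds}\big(u(s)^2\big)\,ds = -2\int_r^\infty u(s) u'(s)\,ds$, so that
\begin{equation*}
\int_1^\infty u(r)^2\,dr \leq 2\int_1^\infty \int_r^\infty |u(s)||u'(s)|\,ds\,dr = 2\int_1^\infty |u(s)||u'(s)|\,(s-1)\,ds \leq 2\int_1^\infty |u(s)||u'(s)|\,s\,ds,
\end{equation*}
using Fubini on the triangle $\{1\le r\le s\}$. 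Then Cauchy--Schwarz gives $\int_1^\infty u^2 \leq 2\big(\int_1^\infty u^2\big)^{1/2}\big(\int_1^\infty (u')^2 s^2\big)^{1/2}$, and after dividing by $\big(\int_1^\infty u^2\big)^{1/2}$ and squaring one obtains \eqref{eq:hardy} with constant $4$. A density argument extends this to all $u\in\dot H^1(B^c)$: one should check that $\int_1^\infty u(r)^2\,dr$ is finite a priori — this is where a little care is needed, as the inequality is what guarantees finiteness. The clean way is to apply the above to $u\chi(r/R)$ with $\chi$ a cutoff, obtain a bound uniform in $R$ on $\int_1^R u^2$, and let $R\to\infty$ by monotone convergence; the cross terms involving $\chi'$ are controlled by $\|u\|_{\dot H^1}^2$ using the $L^6$ bound on $u$ and Hölder.

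Given \eqref{eq:hardy}, inequality \eqref{eq:L^2_comp} is immediate: for a compact $K\subset B^c$ contained in an annulus $\{1\le r\le R_0\}$, $\int_K u^2 \lesssim R_0^2 \int_1^\infty u(r)^2\,dr \lesssim \int_1^\infty u'(r)^2 r^2\,dr \lesssim \|u\|_{\dot H^1}^2$ (one uses $dx = 4\pi r^2\,dr$ and that $r^2$ is bounded below by $1$ on $B^c$, above by $R_0^2$ on $K$). For the trace bound \eqref{eq:u(1)}, write $u(1) = u(r) - \int_1^r u'(s)\,ds$ for, say, $r\in[1,2]$, hence $|u(1)| \leq |u(r)| + \int_1^2 |u'(s)|\,ds \leq |u(r)| + \big(\int_1^2 u'(s)^2 s^2\,ds\big)^{1/2}\big(\int_1^2 s^{-2}\,ds\big)^{1/2}$; averaging over $r\in[1,2]$ and applying \eqref{eq:hardy} to the first term gives $|u(1)|\lesssim \|u\|_{\dot H^1}$.

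I do not expect any serious obstacle here; the only delicate point is the a priori finiteness of $\int u^2$ in the density/truncation step of \eqref{eq:hardy}, which the cutoff argument handles cleanly. This is a standard one-dimensional Hardy inequality adapted to the half-line $[1,\infty)$ with the weight $r^2$ coming from the three-dimensional radial measure.
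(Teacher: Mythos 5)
Your proof is correct and follows essentially the same elementary route as the paper: a one-dimensional integration-by-parts/fundamental-theorem-of-calculus argument on $[1,\infty)$ combined with Cauchy--Schwarz, giving \eqref{eq:hardy} with the constant $4$, and then \eqref{eq:L^2_comp} and \eqref{eq:u(1)} as easy consequences. The only cosmetic difference is that the paper extracts both \eqref{eq:hardy} and \eqref{eq:u(1)} from the single identity $2\int_1^\infty r\,u\,u'\,dr=-\int_1^\infty u^2\,dr-u(1)^2$, whereas you get the trace bound separately by averaging over $r\in[1,2]$; your cutoff argument for the a priori finiteness of $\int_1^\infty u^2\,dr$ plays the same role as the paper's approximation by smooth compactly supported functions.
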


\begin{proof}
Integrating by parts, we get
\begin{equation}
\label{eq:IPP}
2\int_{1}^{\infty}ru(r)u'(r)dr=-\int_{1}^{\infty}u(r)^{2}dr-u(1)^{2}.
\end{equation}
Note that the integration by part{s} is justified by approximating $u$
by smooth compactly supported functions. Thus
\[
\int_{1}^{\infty}u(r)^{2}dr\leq-2\int_{1}^{\infty}ru(r)u'(r)dr,
\]
and \eqref{eq:hardy} follows by the Cauchy-Schwarz inequality. The
estimate \eqref{eq:L^2_comp} follows immediately. Bounding the left-hand side of \eqref{eq:IPP} by the Cauchy-Schwarz inequality, and using \eqref{eq:hardy}, we obtain \eqref{eq:u(1)}.
\end{proof}
\begin{remark}
\label{R:global}
 With the same proof, one can generalize \eqref{eq:u(1)} to $|u(r)|\lesssim \|u\|_{\dot{H}^1(B^c)}$. This implies readily that a radial solution of the defocusing critical wave equation with Neumann boundary condition \eqref{NLWrad}, \eqref{NBDrad} is uniformly bounded, thus global (giving a short proof of the result of \cite{BurqPlanchon09} in the radial case). Similarly, any radial solution of the focusing equation \eqref{NLWrad_foc}, \eqref{NBDrad}  that is bounded in $\HHH(B^c)$, is global.\ednote{Thomas: nouvelle remarque}
\end{remark}

\subsection{Linear estimates}

In the present radial case, we can derive an explicit formula
for the linear flow:
\begin{prop}[The linear group] \ednote{\color{pgreen}ici et dans la suite, chang\'e $v=ur$ pour $\zeta = ur$ et $\varphi, \psi$ par $\varphi_{+}, \varphi_{-}$ (pas indiqu\'e par des couleurs)}
\label{prop:linear_group}For any $(u_{0},u_{1})\in\mathcal{H}(B^c)$,
we have, for almost every $r\geq 1$ and $t \in \mathbb R$, and for every $r\geq 1$ and $t \in \mathbb R$ if
we have additionally $(u_{0},u_{1})\in C^1 \times C^0$:
\begin{equation} \label{eq:lin_grp_main}
\big(S_{N}(u_{0},u_{1})\big)(t,r)=\frac{1}{r}\left(\varphi_{+}(r-t)+\varphi_{-}(r+t)\right)
\end{equation}
where, denoting $(\zeta_{0},\zeta_{1}):=(ru_{0},ru_{1})$, for $s\geq1$,
\begin{align}
\varphi_{+}(s) & =\frac{1}{2}\zeta_{0}(s)-\frac{1}{2}\int_{1}^{s}\zeta_{1}(\sigma)d\sigma,\label{eq:phirg1}\\
\varphi_{-}(s) & =\frac{1}{2}\zeta_{0}(s)+\frac{1}{2}\int_{1}^{s}\zeta_{1}(\sigma)d\sigma,\label{psirg1}
\end{align}
and, for $s\in(-\infty,1]$ 
\begin{align}
\varphi_{+}(s) & =\int_{1}^{2-s}e^{s+\sigma-2}(\zeta'_{0}(\sigma)+\zeta_{1}(\sigma))d\sigma-\frac{1}{2}\zeta_{0}(2-s)-\frac{1}{2}\int_{1}^{2-s}\zeta_{1}(\sigma)d\sigma+e^{s-1}\zeta_{0}(1),\label{phirl1}\\
\varphi_{-}(s) & =\int_{1}^{2-s}e^{s+\sigma-2}(\zeta'_{0}(\sigma)-\zeta_{1}(\sigma))d\sigma-\frac{1}{2}\zeta_{0}(2-s)+\frac{1}{2}\int_{1}^{2-s}\zeta_{1}(\sigma)d\sigma+e^{s-1}\zeta_{0}(1).\label{eq:psirl1}
\end{align}
Moreover, for $f \in L^1(\mathbb R, L^2 (\mathbb R ^3,  B^c))$ radial, we have, for $t\geq 0$ and $r-t < 1$
\begin{multline} \label{eq:lin_grp_duhamel}
\int_0^t \Big(S_N (0, f(\tau))\Big)(t-\tau, r) \ d\tau= \frac{1}{r} \int_0^{1-r+t}  \Big( \int_1^{2-r+t-\tau} e^{r-t+\tau+\sigma-2} \sigma f(\tau, \sigma) \ d\sigma \\ + \int_{2-r+t-\tau}^{r+t-\tau} \sigma f(\tau, \sigma) \ d\sigma \Big) d\tau 
+ \frac{1}{2r}\int_{1-r+t}^t \int_{r-t+\tau} ^ {r+t-\tau} \sigma f(\tau, \sigma) \ d\sigma \ d\tau.
\end{multline}
\end{prop}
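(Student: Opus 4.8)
The plan is to reduce everything to the one-dimensional wave equation through the standard substitution $\zeta(t,r)=ru(t,r)$, under which $\partial_t^2u-\Delta u=0$ becomes $\partial_t^2\zeta-\partial_r^2\zeta=0$ on $\{r\ge 1\}$; d'Alembert's formula then gives $\zeta(t,r)=\varphi_+(r-t)+\varphi_-(r+t)$ for two profiles $\varphi_\pm$. Matching at $t=0$ with $(\zeta_0,\zeta_1)=(ru_0,ru_1)$ yields $\varphi_++\varphi_-=\zeta_0$ and $-\varphi_+'+\varphi_-'=\zeta_1$ on $[1,\infty)$, and integrating while normalising the harmless additive gauge $\varphi_+\mapsto\varphi_++c$, $\varphi_-\mapsto\varphi_--c$ by $\varphi_\pm(1)=\tfrac12\zeta_0(1)$ produces \eqref{eq:phirg1}--\eqref{psirg1}. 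Up to this point nothing differs from the flat radial computation on $\mathbb R^3$; the boundary is the only new ingredient.

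Next I would translate the Neumann condition. Since $u=\zeta/r$ one has $\partial_ru=r^{-1}\partial_r\zeta-r^{-2}\zeta$, so $\partial_ru(t,1)=0$ is equivalent to $\partial_r\zeta(t,1)=\zeta(t,1)$, i.e.
\[
\varphi_+'(1-t)+\varphi_-'(1+t)=\varphi_+(1-t)+\varphi_-(1+t),\qquad t\in\mathbb R .
\]
For $t\ge 0$ the argument $1+t$ lies in $[1,\infty)$, where $\varphi_-$ is already known from \eqref{psirg1}; writing $s=1-t\le 1$ the identity becomes the linear first-order ODE
\[
\varphi_+'(s)-\varphi_+(s)=\varphi_-(2-s)-\varphi_-'(2-s)=:g(s),
\]
whose right-hand side is explicit in $\zeta_0,\zeta_0',\zeta_1$. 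Solving it with integrating factor $e^{-s}$ and the value $\varphi_+(1)=\tfrac12\zeta_0(1)$ expresses $\varphi_+$ on $(-\infty,1]$ as an integral of $g$; the change of variable $\sigma\mapsto 2-\sigma$ followed by two integrations by parts (one to move the derivative off $\zeta_0$, one to collapse the iterated integral of $\zeta_1$) rearranges this into the compact form \eqref{phirl1}. The companion formula \eqref{eq:psirl1} for $\varphi_-$ on $(-\infty,1]$ follows in the same way from the values $t\le 0$ of the boundary identity, or more cheaply by the time-reversal symmetry $t\mapsto -t$, which interchanges $\varphi_+\leftrightarrow\varphi_-$ and sends $\zeta_1\mapsto-\zeta_1$. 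For $(u_0,u_1)\in C^1\times C^0$ all the manipulations are classical and the pointwise identity \eqref{eq:lin_grp_main} holds everywhere; the a.e.\ statement for general data in $\mathcal H(B^c)$ then follows since both sides of \eqref{eq:lin_grp_main} depend continuously on $(\zeta_0,\zeta_1)$ uniformly on compact ranges of $r,t$ (using \eqref{eq:u(1)} to control $\zeta_0(1)$) and such data are dense in $\mathcal H(B^c)$.

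For the Duhamel identity \eqref{eq:lin_grp_duhamel} I would insert the representation just proved into $\big(S_N(0,f(\tau))\big)(t-\tau,r)$ with $(\zeta_0,\zeta_1)=(0,\sigma f(\tau,\sigma))$, so that $\varphi_\pm$ reduce to their $\zeta_1$-terms only. For $t\ge 0$ and $\tau\in[0,t]$ the argument $r+t-\tau$ is $\ge 1$, so $\varphi_-$ is given by \eqref{psirg1}; the argument $r-t+\tau$ of $\varphi_+$ is $\ge 1$ precisely when $\tau\ge 1-r+t$ and lies in $(-\infty,1)$ otherwise --- here the hypothesis $r-t<1$ guarantees $1-r+t>0$, so the split point is interior to $(0,t)$. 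Splitting $\int_0^t(\cdot)\,d\tau$ at $\tau=1-r+t$ and using \eqref{eq:phirg1} on the outer piece and \eqref{phirl1} (with $\zeta_0\equiv 0$) on the inner piece yields the two terms of \eqref{eq:lin_grp_duhamel}, the integrands of the two pieces agreeing at $\tau=1-r+t$ by continuity of $\varphi_+$ at $s=1$.

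I expect the second paragraph to be the only delicate point: setting up the boundary relation correctly as a delay-type coupling of $\varphi_+$ and $\varphi_-$, solving the resulting ODE, and --- above all --- carrying out the integrations by parts so that the answer collapses to the compact form \eqref{phirl1}--\eqref{eq:psirl1}. Everything else is either the textbook radial $\mathbb R^3$ computation or bookkeeping of one reflection off the sphere $\{r=1\}$. It is worth noting that the exponential kernel $e^{s+\sigma-2}$ appearing once $s\le 1$ is precisely the tail created by this reflection, which is why the strong Huygens principle fails for $S_N$, in contrast with $S_{\mathbb R^3}$.
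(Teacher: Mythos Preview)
Your proposal is correct and follows essentially the same route as the paper: reduce to the one-dimensional wave equation via $\zeta=ru$, use d'Alembert's decomposition and the initial data to fix $\varphi_\pm$ on $[1,\infty)$, rewrite the Neumann condition as the relation $\varphi_+'(1-t)+\varphi_-'(1+t)=\varphi_+(1-t)+\varphi_-(1+t)$, and integrate this ODE to extend $\varphi_\pm$ to $(-\infty,1]$; the Duhamel formula is then obtained by plugging in and splitting the $\tau$-integral at $\tau=1-r+t$. Your write-up is in fact more explicit than the paper's (which compresses the ODE step to ``integrating \eqref{eq:wd1eqpsiphi} for $t\ge 0$'' and calls \eqref{eq:lin_grp_duhamel} ``a straightforward computation''); the only minor slip is the phrase ``move the derivative off $\zeta_0$,'' since \eqref{phirl1} still contains $\zeta_0'$---the integration by parts rather converts the non-differentiated $\zeta_0(2-\sigma')$ term under the exponential integral into the $\zeta_0'$ contribution plus the boundary piece that combines with $\tfrac12 e^{s-1}\zeta_0(1)$ to give the full $e^{s-1}\zeta_0(1)$.
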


\begin{proof}
Observe that, arguing by density, it suffices to consider smooth $(u_{0},u_{1})$, for which $\partial_r (S_{N}(u_{0},u_{1}))(1,t) = 0$ for all $t\neq 0$. Let us denote $\zeta(t,r)=rS_{N}(u_{0},u_{1})(t,r).$ Then $\zeta$ is solution
of the one dimensional problem 
\begin{gather}
\partial_{t}^{2}\zeta-\partial_{r}^{2}\zeta=0,\label{eq:wd1}\\
{\partial_{r}\zeta-\zeta}_{\restriction r=1}=0 \quad \forall t \neq 0\label{eq:wd1bound}, \\
\zeta_{\restriction t=0}=(ru_{0},ru_{1}).\label{eq:wd1ini}
\end{gather}
By (\ref{eq:wd1}), $\zeta(r)=\varphi_{+}(r-t)+\varphi_{-}(r+t)$. The boundary condition
(\ref{eq:wd1bound}) gives 
\begin{equation}
\forall t,\ \varphi_{+}'(1-t)+\varphi_{-}'(1+t)=\varphi_{+}(1-t)+\varphi_{-}(1+t),\label{eq:wd1eqpsiphi}
\end{equation}
and the initial condition (\ref{eq:wd1ini}) gives (\ref{eq:phirg1})
and (\ref{psirg1}). Then, integrating (\ref{eq:wd1eqpsiphi}) for
$t\geq0$ gives (\ref{phirl1}), and integrating it for $t\leq0$
gives (\ref{eq:psirl1}).
The identity (\ref{eq:lin_grp_duhamel}) is then a straightforward computation.
\end{proof}

As a first consequence of Proposition \ref{prop:linear_group}, we prove that any radial solution of the linear wave equation on $B^c$ with Neumann boundary conditions is asymptotically close to a solution of the linear wave equation on $\RR^3$.

\begin{prop}
\label{prop:linear_scattering} \ednote{\color{pgreen}Chang\'e $\tilde u$ en $v$ pour la coherence des notations, pas indiqu\'e par des couleurs}
 Let $(u_0,u_1)\in \mathcal{H}(B^c)$. Then 
 \begin{equation}
\label{Hardy0}
\lim_{t\to\infty} \left\|\frac{1}{|x|}S_N(t)(u_0,u_1)\right\|_{L^2(B^c)}=0
 \end{equation}
 and there exists $({v}_0,{v}_1)\in \mathcal{H}(\RR^3)$ such that 
 \begin{equation}
  \label{Free=Neumann}
\lim_{t\to+\infty} \left\|\vec{S}_N(t)(u_0,u_1)-\vec{S}_{\RR^3}(t)({v}_0,{v}_1) \right\|_{\HHH(B^c)}=0.
  \end{equation} 
\end{prop}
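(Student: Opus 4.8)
The plan is to exploit the explicit d'Alembert-type formula \eqref{eq:lin_grp_main} for the Neumann flow. Writing $\zeta(t,r) = r\,\bigl(S_N(u_0,u_1)\bigr)(t,r) = \varphi_{+}(r-t) + \varphi_{-}(r+t)$, the point is that for $t$ large the argument $r-t$ of $\varphi_{+}$ is very negative (since $r\geq 1$), so the ``reflected'' part $\varphi_{+}(r-t)$ is governed by the formula \eqref{phirl1} valid on $(-\infty,1]$. First I would record that $\varphi_{-}$, restricted to $[1,\infty)$, is, up to the factor $1/r$, exactly the data of a free outgoing radial wave on $\RR^3$: indeed \eqref{eq:phirg1}--\eqref{psirg1} say $\varphi_{\pm}(s) = \tfrac12 \zeta_0(s) \mp \tfrac12\int_1^s \zeta_1$, which are precisely the traces of a $1$D wave with initial position $\zeta_0$ and velocity $\zeta_1$ on $[1,\infty)$. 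So the natural candidate for $(v_0,v_1)$ is obtained by extending $(\zeta_0,\zeta_1)$ from $[1,\infty)$ to all of $\RR$ in an odd fashion (so that $v(t,r) = \frac1r(\varphi_{-}^{\mathrm{odd}}(r-t) + \varphi_{-}^{\mathrm{odd}}(r+t))$ solves the free wave equation on $\RR^3$); one checks this gives $(v_0,v_1)\in\HHH(\RR^3)$ because $\|\nabla v\|_{L^2(\RR^3)}$ and $\|v\|_{L^6}$ are controlled by $\|\zeta_1\|_{L^2(1,\infty)}^2 + \|\zeta_0'\|_{L^2(1,\infty)}^2 \lesssim \|(u_0,u_1)\|_{\HHH(B^c)}^2$, using the Hardy-type bound \eqref{eq:hardy} of Lemma \ref{lem:u(1)HN} to absorb the zeroth-order terms.

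Next I would prove \eqref{Hardy0}. By \eqref{eq:lin_grp_main}, $\bigl\|\tfrac{1}{|x|}S_N(t)(u_0,u_1)\bigr\|_{L^2(B^c)}^2 = \int_1^\infty \frac{1}{r^2}\bigl(\varphi_{+}(r-t)+\varphi_{-}(r+t)\bigr)^2 dr$. The weight $1/r^2$ is what makes this decay. For the $\varphi_{-}(r+t)$ piece, on $r\geq 1$ and $t\geq 0$ the argument $r+t\geq 1$, and $\varphi_{-}$ on $[1,\infty)$ is bounded with $\varphi_{-}' \in L^2$ (from \eqref{psirg1} and Lemma \ref{lem:u(1)HN}); so $\int_1^\infty \frac{1}{r^2}\varphi_{-}(r+t)^2\,dr \to 0$ by dominated convergence once one notes $\varphi_{-}(r+t) \to \varphi_{-}(\infty)$ (a finite limit, since $\varphi_{-}'\in L^2$ forces $\int_1^\infty (\varphi_{-}(s)-\varphi_{-}(\infty))^2 s^{-2}ds < \infty$, and more simply the tail of $\int \frac{dr}{r^2}$ kills any bounded function's contribution from $r$ large while for $r$ in a fixed compact set $\varphi_{-}(r+t)$ is eventually near the constant $\varphi_{-}(\infty)$... one must be a bit careful, the cleanest route is: change variables $s=r+t$, get $\int_{1+t}^\infty \frac{\varphi_-(s)^2}{(s-t)^2}ds$, and bound using that $\varphi_-$ is bounded and $\varphi_-' \in L^2$ gives an $o(1)$ tail). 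For the $\varphi_{+}(r-t)$ piece, for $t$ large and $r$ in any fixed compact set the argument $r-t$ lies in $(-\infty,1]$, where \eqref{phirl1} applies: as $s\to-\infty$, the explicit formula shows $\varphi_{+}(s)$ is a sum of terms each tending to $0$ (the $e^{s-1}$ factors vanish and the integrals $\int_1^{2-s}e^{s+\sigma-2}(\cdots)d\sigma$, $\int_1^{2-s}\zeta_1$, $\zeta_0(2-s)$ all combine to something bounded, in fact converging); combined with the $1/r^2$ weight one gets the limit is $0$. I would isolate this decay of $\varphi_+(s)$ as $s\to-\infty$ as a small lemma, since it is reused.

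Then \eqref{Free=Neumann} follows by comparing $\vec S_N(t)(u_0,u_1)$ with $\vec S_{\RR^3}(t)(v_0,v_1)$: writing $v(t,r) = \frac1r(\varphi_{-}(r-t) + \varphi_{-}(r+t))$ on $B^c$ with $\varphi_-$ now the odd-extended profile (so that on $r-t \le 1$, i.e. the relevant region, $\varphi_-(r-t)$ is the genuine free data, not the reflected one), the difference $S_N(t)(u_0,u_1) - S_{\RR^3}(t)(v_0,v_1) = \frac1r\bigl(\varphi_{+}(r-t) - \varphi_{-}(r-t)\bigr)$ on $B^c$, and a parallel identity holds for the time derivatives with $\varphi_\pm'$. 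On the region $r \ge 1+t$ this is identically zero by \eqref{eq:phirg1}--\eqref{psirg1} and the matching of the extension on $[1,\infty)$; on $1 \le r \le 1+t$ the argument $r-t \le 1$ and one uses \eqref{phirl1} for $\varphi_+$ and the (explicitly known, elementary) odd-extension formula for $\varphi_-$. Computing $\bigl\|\vec S_N(t)(u_0,u_1) - \vec S_{\RR^3}(t)(v_0,v_1)\bigr\|_{\HHH(B^c)}^2$ reduces, after using $\partial_r(w/r) = w'/r - w/r^2$ and the bound \eqref{eq:hardy}, to controlling $\int_1^{1+t}\bigl((\varphi_+' - \varphi_-')(r-t)\bigr)^2 dr + \int_1^{1+t} \frac{((\varphi_+-\varphi_-)(r-t))^2}{r^2}dr$; substituting $s = r-t \in [1-t, 1]$ this is $\int_{1-t}^1 (\varphi_+'-\varphi_-')(s)^2 ds + \int_{1-t}^1 \frac{(\varphi_+ - \varphi_-)(s)^2}{(s+t)^2}ds$, and since $\varphi_+' - \varphi_-'$ and $\varphi_+ - \varphi_-$ are in $L^2(-\infty,1)$ (again from the explicit formulas \eqref{phirl1}--\eqref{eq:psirl1} and Lemma \ref{lem:u(1)HN}, and the integrability of the odd extension), as $t\to +\infty$ the first integral $\to 0$ (tail of an $L^2$ function) and the second is dominated by the first plus a vanishing contribution.

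The main obstacle I anticipate is purely technical bookkeeping with the two regimes of the explicit formula: one must track exactly where each argument $r \pm t$ falls relative to $1$, make sure the candidate data $(v_0,v_1)$ is correctly identified as the odd extension so that the $r-t \le 1$ region matches, and verify that every zeroth-order term produced by \eqref{phirl1}--\eqref{eq:psirl1} (in particular the boundary-value terms $e^{s-1}\zeta_0(1)$, controlled via \eqref{eq:u(1)}) lands in $L^2$ with the right decay. The conceptual content — that the Neumann wave outside the ball splits into a genuinely outgoing free wave plus a reflected part that escapes to $\{r - t \le 1\}$ and is annihilated by the $1/r$ weight together with energy decay near the boundary — is straightforward; making the error estimates uniform and letting $t \to \infty$ cleanly is where the care is needed.
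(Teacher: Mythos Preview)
Your overall strategy---exploit the d'Alembert formula \eqref{eq:lin_grp_main} and match asymptotic profiles---is the same as the paper's, but there is a real gap in your treatment of \eqref{Hardy0}, and your construction of $(v_0,v_1)$ for \eqref{Free=Neumann} is different from (and less clean than) the paper's.

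\textbf{The gap in \eqref{Hardy0}.} You split $\varphi_+(r-t)+\varphi_-(r+t)$ and claim each piece has vanishing $L^2(r^{-2}\,dr)$ norm. This is false in general: for smooth compactly supported data, as $s\to+\infty$ one has $\varphi_-(s)\to L:=\tfrac12\int_1^\infty \zeta_1$, and from \eqref{phirl1} as $s\to-\infty$ one has $\varphi_+(s)\to -L$. Hence $\int_1^\infty \varphi_-(r+t)^2\,r^{-2}\,dr\to L^2$ and likewise for the $\varphi_+$ piece; the total only vanishes because of the cross term $2\int \varphi_+(r-t)\varphi_-(r+t)\,r^{-2}\,dr\to -2L^2$. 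Your separate estimates therefore cannot close. The paper fixes this by first subtracting the constants, setting $\bar\varphi_\pm=\varphi_\pm\pm L$, so that (for compactly supported data) $|\bar\varphi_+(s)|+|\bar\varphi_-(s)|\le Ce^{s}\indic_{\{s\le C\}}$; then \eqref{Hardy0} follows immediately. You also never invoke density to reduce to compactly supported data, which is needed to make $\varphi_-$ bounded and to have the limits above exist at all (your assertion that $\varphi_-'\in L^2$ forces a finite limit at infinity is incorrect).

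\textbf{Comparison for \eqref{Free=Neumann}.} Your ``odd extension'' construction is muddled: a radial free wave on $\RR^3$ requires $rv(t,r)$ to vanish at $r=0$, i.e.\ oddness about $0$, not about $1$, and it is unclear how your extension fills in $[0,1]$ or why the resulting difference is exactly $\tfrac1r(\varphi_+(r-t)-\varphi_-(r-t))$. The paper avoids building $(v_0,v_1)$ by hand: it shows directly that $\varphi_+'\in L^2(\RR)$ and that
\[
\int_1^\infty\bigl|r\partial_t u(t,r)+\varphi_+'(r-t)\bigr|^2dr\to 0,\qquad
\int_1^\infty\bigl|r\partial_r u(t,r)-\varphi_+'(r-t)\bigr|^2dr\to 0,
\]
using only that $\int_1^\infty|\varphi_-'(r+t)|^2dr\to 0$ and the already-proved \eqref{Hardy0}. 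It then quotes the radiation-field characterization of free waves (\cite{DuKeMe19}) to produce $(v_0,v_1)\in\HHH(\RR^3)$ whose free evolution has asymptotic profile $G=-\varphi_+'$. This is shorter and sidesteps the bookkeeping you anticipate.
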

\begin{proof}
\setcounter{step}{0}
\begin{step}
 We first prove \eqref{Hardy0}. By a straightforward density argument and the conservation of the energy, we can assume that $(u_0,u_1)$ is smooth and compactly supported. We let $\varphi_+$ and $\varphi_-$ be as in Proposition \ref{prop:linear_group}, and
\begin{equation*}
 \bar{\varphi}_+(s)=\varphi_{+}(s)+\frac{1}{2}\int_1^{+\infty}\zeta_1(\sigma)\,d\sigma,\quad
 \bar{\varphi}_-(s)=\varphi_{-}(s)-\frac{1}{2}\int_{1}^{+\infty} \zeta_1(\sigma)d\sigma.
\end{equation*}
By \eqref{eq:lin_grp_main}, 
\begin{equation} \label{eq:lin_grp_main'}
\big(S_{N}(u_{0},u_{1})\big)(t,r)=\frac{1}{r}\left(\bar{\varphi}_+(r-t)+\bar{\varphi}_-(r+t)\right)
\end{equation}
We claim that there exists $C>0$ (depending on $u$) such that 
\begin{equation}
\label{boundbar}
\left|\bar{\varphi}_+(s)\right|+\left|\bar{\varphi}_-(s)\right|\leq Ce^{s} \indic_{s\leq C}.
 \end{equation} 
 Note that \eqref{eq:lin_grp_main'} and \eqref{boundbar} imply easily \eqref{Hardy0}. Using that $\zeta_0$, $\zeta_0'$ and $\zeta_1$ are bounded and compactly supported, the bound of $\bar{\varphi}_+$ in \eqref{boundbar} follows from the fact that if $s\geq 1$,
\begin{equation*}
\bar{\varphi}_+(s)=\frac{1}{2}\zeta_0(s)+\frac{1}{2}\int_s^{+\infty} \zeta_1(\sigma)\,d\sigma
\end{equation*}
and if $s<1$,
\begin{equation*}
\bar{\varphi}_+(s)=
\int_{1}^{2-s}e^{s+\sigma-2}(\zeta'_{0}(\sigma)+\zeta_{1}(\sigma))d\sigma-\frac{1}{2}\zeta_{0}(2-s)+\frac{1}{2}\int_{2-s}^{+\infty}\zeta_{1}(\sigma)d\sigma+e^{s-1}\zeta_{0}(1).
\end{equation*}
The proof of the bound of $\bar{\varphi}_-$ in \eqref{eq:lin_grp_main'} is very similar and we omit it.
\end{step}
\begin{step}
 We next prove that there exists $({v}_0,{v}_1)\in \mathcal{H}(\RR^3)$ such that \eqref{Free=Neumann} holds. We recall (see e.g. \cite[Theorem 2.1]{DuKeMe19}) that for\ednote{Thomas: quand j'ai fait un changement, j'ai mis en orange la vieille version. Si c'est un rajout, j'ai mis en bleu la nouvelle version} any $G\in L^2(\RR)$, there exists a radial solution $v(t)=S_{\RR^3}(t)({v}_0,{v}_1)$ of the linear wave equation on $\RR^3$ such that 
 \begin{align}
  \label{free_asymptotica}
  \lim_{t\to\infty} \int_0^{+\infty}\left|r\partial_tv(t,r)-G(r-t)\right|^2dr&=0,\\
  \label{free_asymptoticb}
 \lim_{t\to\infty} \int_0^{+\infty}\left|r\partial_rv(t,r)+G(r-t)\right|^2dr&=0.
 \end{align}
Denote by $u(t,r)=\left(S_N(u_0,u_1)\right)(t,r)$. Let $\varphi_{+}(s)$ be as in Proposition \ref{prop:linear_group}. We will prove that $\varphi_{+}'\in L^2(\RR)$ and that 
 \begin{align}
  \label{Neumann_asymptotica}
  \lim_{t\to\infty} \int_1^{+\infty}\left|r\partial_t u(t,r)+\varphi_{+}'(r-t)\right|^2dr&=0,\\
  \label{Neumann_asymptoticb}
 \lim_{t\to\infty} \int_1^{+\infty}\left|r\partial_r u(t,r)-\varphi_{+}'(r-t)\right|^2dr&=0.
 \end{align} 
Letting $(v_0,v_1)$ be such that \eqref{free_asymptotica} and \eqref{free_asymptoticb} are satisfied with $G=-\varphi_{+}'$ we see that \eqref{Neumann_asymptotica} and \eqref{Neumann_asymptoticb} imply the desired conclusion \eqref{Free=Neumann}.

By the definition of $\varphi_{+}$, we have 
\begin{equation*}
  \varphi_{+}'(s)=\begin{cases}
 \frac 12 \zeta_0'(s)-\frac 12 \zeta_1(s)&\text{ if }s\geq 1\\
 -\frac{1}{2}\zeta_0'(2-s)-\frac{1}{2}\zeta_1(2-s)+e^{s-1}u_0(1)&\text{ if }s\leq 1,
              \end{cases}
\end{equation*}
where $(\zeta_0,\zeta_1)=(ru_0,ru_1)$. Since $\zeta_0'$ and $\zeta_1$ are  in $L^2([1,+\infty))$, we obtain that $\varphi_{+}'\in L^2(\RR)$. The same proof yields that $\varphi_{-}'\in L^2(\RR)$.  By Proposition \ref{prop:linear_group}, 
$$\partial_tu(t,r)=\frac{1}{r}\left(-\varphi_{+}'(r-t)+\varphi_{-}'(r+t)\right),$$
and thus
$$\int_{1}^{+\infty}|r\partial_tu(t,r)+\varphi_{+}'(r-t)|^2dr=\int_1^{+\infty}|\varphi_{-}'(t+r)|^2dr \underset{r\to\infty}{\longrightarrow}0.$$
Similarly
$$\partial_ru(t,r)=\frac{1}{r}\left(\varphi_{+}'(r-t)+\varphi_{-}'(r+t)\right)-\frac{1}{r}u(t,r),$$
and thus, using \eqref{Hardy0}, 
\begin{multline*}
\int_{1}^{+\infty}|r\partial_ru(t,r)-\varphi_{+}'(r-t)|^2dr\leq 2\int_1^{+\infty}|\varphi_{-}'(t+r)|^2dr+2\int_{1}^{+\infty}|u(t,r)|^2\,dr\\
\underset{r\to\infty}{\longrightarrow}0. 
\end{multline*}
This concludes the proof.
\end{step}

 \end{proof}

An other consequence of Proposition \ref{prop:linear_group} is the local decay of energy:
\begin{prop}[Local energy decay]
\label{prop:loc_decay}Let $\chi\in C_{c}^{\infty}$. For any $(u_{0},u_{1})\in\mathcal{H}(B^c)$
 
\[
\Vert(\chi u,\chi\partial_{t}u)\Vert_{L^{2}(\mathbb{R},\mathcal{H}(B^c))}\lesssim_{\chi}\Vert(u_{0},u_{1})\Vert_{\mathcal{H}(B^c)}.
\]
where $u=S_{N}(u_{0},u_{1})$.
\end{prop}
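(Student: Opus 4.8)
The plan is to exploit the explicit representation of the linear flow from Proposition \ref{prop:linear_group}. Write $\zeta(t,r):=r\,S_N(u_0,u_1)(t,r)=\varphi_+(r-t)+\varphi_-(r+t)$, and fix $R>1$ with $\operatorname{supp}\chi\subset\overline{B(0,R)}$. Since $|\chi(x)|+|\nabla\chi(x)|\lesssim_\chi\indic_{\{1\le|x|\le R\}}$, expanding $\nabla(\chi u)=u\,\nabla\chi+\chi\,\nabla u$ and using that $u=S_N(u_0,u_1)$ is radial, the estimate reduces to
\[
\int_{\RR}\int_1^R\Big(|\zeta(t,r)|^2+|\partial_t\zeta(t,r)|^2+|\partial_r\zeta(t,r)|^2\Big)\,dr\,dt\ \lesssim_R\ \|(u_0,u_1)\|_{\mathcal H(B^c)}^2 ,
\]
because for $r\ge1$ one has $r^2|u|^2=|\zeta|^2$, $r^2|\partial_tu|^2=|\partial_t\zeta|^2$ and $r^2|\partial_ru|^2=|\partial_r\zeta-u|^2\le 2|\partial_r\zeta|^2+2|\zeta|^2$.

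First I would record that $\varphi_\pm'\in L^2(\RR)$ with $\|\varphi_+'\|_{L^2(\RR)}+\|\varphi_-'\|_{L^2(\RR)}\lesssim\|(u_0,u_1)\|_{\mathcal H(B^c)}$: this is the computation already carried out in the proof of Proposition \ref{prop:linear_scattering}, where the explicit formulas for $\varphi_\pm'$ reduce everything to $\|\zeta_0'\|_{L^2([1,\infty))}$, $\|\zeta_1\|_{L^2([1,\infty))}$ and $|u_0(1)|$, each controlled by $\|(u_0,u_1)\|_{\mathcal H(B^c)}$ via \eqref{eq:hardy} and \eqref{eq:u(1)} of Lemma \ref{lem:u(1)HN}. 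Since $\partial_t\zeta=-\varphi_+'(r-t)+\varphi_-'(r+t)$ and $\partial_r\zeta=\varphi_+'(r-t)+\varphi_-'(r+t)$, Fubini together with the translation invariance of Lebesgue measure gives at once
\[
\int_\RR\int_1^R\big(|\partial_t\zeta|^2+|\partial_r\zeta|^2\big)\,dr\,dt\ \lesssim\ (R-1)\big(\|\varphi_+'\|_{L^2(\RR)}^2+\|\varphi_-'\|_{L^2(\RR)}^2\big)\ \lesssim_R\ \|(u_0,u_1)\|_{\mathcal H(B^c)}^2 .
\]
For the remaining term I would write $\zeta(t,r)=u(t,1)+\int_1^r\partial_\rho\zeta(t,\rho)\,d\rho$, so that $\int_1^R|\zeta(t,r)|^2\,dr\lesssim_R|u(t,1)|^2+\int_1^R|\partial_\rho\zeta(t,\rho)|^2\,d\rho$; the last term is already controlled, so it remains to prove $\int_\RR|u(t,1)|^2\,dt\lesssim\|(u_0,u_1)\|_{\mathcal H(B^c)}^2$.

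This boundary-trace bound is the only point needing a genuine computation, and is the main obstacle. Evaluating $u(t,1)=\varphi_+(1-t)+\varphi_-(1+t)$ with Proposition \ref{prop:linear_group}, the contributions involving $\zeta_0$ and the running integrals of $\zeta_1$ cancel, leaving for $t\ge0$
\[
u(t,1)=\int_1^{1+t}e^{\sigma-1-t}\big(\zeta_0'(\sigma)+\zeta_1(\sigma)\big)\,d\sigma+e^{-t}u_0(1),
\]
and the symmetric formula with $\zeta_0'-\zeta_1$ and $e^{t}u_0(1)$ for $t\le0$. After the change of variables $\sigma\mapsto 1+t-\sigma$ (resp. $1-t-\sigma$) this exhibits $t\mapsto u(t,1)\indic_{\{t\ge0\}}$ (resp. $\indic_{\{t\le0\}}$) as the sum of the convolution of the $L^1(\RR)$ kernel $s\mapsto e^{-s}\indic_{\{s\ge0\}}$ with an $L^2(\RR)$ function of norm $\lesssim\|(u_0,u_1)\|_{\mathcal H(B^c)}$, and of $s\mapsto e^{-s}\indic_{\{s\ge0\}}u_0(1)$; Young's inequality and \eqref{eq:u(1)} then yield $\int_\RR|u(t,1)|^2\,dt\lesssim\|(u_0,u_1)\|_{\mathcal H(B^c)}^2$, completing the proof. (As the identities of Proposition \ref{prop:linear_group} hold for a.e. $(t,r)$, no further density argument is needed.) The point to stress is that one cannot bound $\int_1^R|\zeta|^2$ by integrating in $t$ directly, since the outgoing part $\varphi_+(\cdot-t)$ need not lie in $L^2(\RR)$ (it may tend to a nonzero constant); it is precisely the decaying exponential kernel generated by the Neumann condition at $r=1$ that renders the trace $u(\cdot,1)$ square-integrable in time with the correct dependence on the data.
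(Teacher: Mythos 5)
Your proof is correct, and while it runs on the same engine as the paper's argument --- the explicit representation $\zeta(t,r)=\varphi_+(r-t)+\varphi_-(r+t)$ of Proposition \ref{prop:linear_group}, with the decaying exponential produced by the Neumann condition at $r=1$ doing the real work --- you organize the estimate differently. The paper splits time into $|t|\leq R-1$, handled by conservation of energy and the Hardy-type bound \eqref{eq:L^2_comp}, and $|t|\geq R-1$, where it bounds the undifferentiated explicit formula term by term (Cauchy--Schwarz against the weight $e^{\sigma-t}$, then Fubini). You avoid any time-splitting: you control $\partial_r\zeta$ and $\partial_t\zeta$ globally in time from $\Vert\varphi_\pm'\Vert_{L^2(\RR)}\lesssim\Vert(u_0,u_1)\Vert_{\mathcal H(B^c)}$ and translation invariance, and you recover the zeroth-order term from the boundary trace, for which the cancellation in $u(t,1)=\varphi_+(1-t)+\varphi_-(1+t)$ plus Young's inequality with the kernel $e^{-s}\indic_{s\geq 0}$ yield $\Vert u(\cdot,1)\Vert_{L^2(\RR)}\lesssim\Vert(u_0,u_1)\Vert_{\mathcal H(B^c)}$ --- a clean trace estimate the paper never isolates, and your closing remark about why one cannot simply integrate $\varphi_+(\cdot-t)$ in time is exactly the right explanation of why it is needed. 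The paper's route, by contrast, needs no trace bound and no $L^2$ theory for $\varphi_\pm'$, only pointwise bounds on the pieces of the formula in the region $r-t<1$.

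One caveat on the step you delegate to the proof of Proposition \ref{prop:linear_scattering}: differentiating \eqref{phirl1} gives, for $s\leq 1$,
\begin{equation*}
\varphi_{+}'(s)=\int_{1}^{2-s}e^{s+\sigma-2}\big(\zeta_{0}'(\sigma)+\zeta_{1}(\sigma)\big)\,d\sigma-\tfrac{1}{2}\zeta_{0}'(2-s)-\tfrac{1}{2}\zeta_{1}(2-s)+e^{s-1}\zeta_{0}(1),
\end{equation*}
so the derivative contains the exponential convolution term in addition to the three contributions you list (the displayed formula in that proof drops it). This does not affect your argument: that term is in $L^2(\RR)$ with the correct bound by exactly the Young's-inequality step you already use for the trace, but the quantitative estimate $\Vert\varphi_\pm'\Vert_{L^2(\RR)}\lesssim\Vert(u_0,u_1)\Vert_{\mathcal H(B^c)}$ should be stated as including it.
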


\begin{proof}
Let $\zeta(t,r):=ru(t,r)$ and $R>0$ be arbitrary. Note that
\[
\int_1^R r^{2}\left((\partial_{r}u)^{2}+u^{2}+(\partial_{t}u)^{2}\right)dr\lesssim_{R}\int_{1}^{R}\left((\partial_{r}\zeta)^{2}+\zeta^{2}+(\partial_{t}\zeta)^{2}\right)dr,
\]
thus, to obtain the proposition, it is sufficient to show that
\begin{equation}
\int_{-\infty}^{\infty}\int_{1}^{R}\left((\partial_{r}\zeta)^{2}+\zeta^{2}+(\partial_{t}\zeta)^{2}\right)drdt\lesssim_{R}\Vert u_{0}\Vert_{\dot{H}^1(B^c)}^{2}+\Vert u_{1}\Vert_{L^{2}}^{2}.\label{eq:loc_decay_v}
\end{equation}
To this purpose, observe that, by conservation of energy 
\begin{equation}
\int_{-R+1}^{R-1}\int_{1}^{R}\left((\partial_{r}\zeta)^{2}+\zeta^{2}+(\partial_{t}\zeta)^{2}\right)drdt\lesssim_{R}\Vert u_{0}\Vert_{\dot{H}^1(B^c)}^{2}+\Vert u_{1}\Vert_{L^{2}}^{2},\label{eq:loc_dec_comp}
\end{equation}
where we used \eqref{eq:L^2_comp} to bound the $u^{2}$ term. Thus,
it suffices to bound the integrals $\int_{t\geq R-1}$ and $\int_{t\leq-R+1}$.
We will deal with the first one, the proof of the bound for the second
one being similar. Thus, let us suppose that $t\geq R-1$. In particular,
$t\geq r-1$, so $\zeta$ writes, by Proposition \ref{prop:linear_group},
for such $t$'s, for all $r\in [1,R]$:
\begin{multline*}
\zeta(t,r)=\int_{1}^{2-r+t}e^{r-t+\sigma-2}(\zeta'_{0}(\sigma)+\zeta_{1}(\sigma))d\sigma+\frac{1}{2}\int_{2-r+t}^{r+t}\zeta_{1}(\sigma)d\sigma\\
+\frac{1}{2}\zeta_{0}(r+t)-\frac{1}{2}\zeta_{0}(2-r+t)+e^{r-t-1}\zeta_{0}(1).
\end{multline*}
Thus, we have, for $t\geq R-1$ and $1\leq r\leq R$
\begin{multline}
(\partial_{r}\zeta(t,r))^{2}+(\partial_{t}\zeta)^{2}+\zeta^{2}\\ \lesssim_{R}\left(\int_{1}^{2-r+t}e^{\sigma-t}(\zeta'_{0}(\sigma)+\zeta_{1}(\sigma))d\sigma\right)^{2}+\left(\int_{2-r+t}^{r+t}(\zeta'_{0}(\sigma)+\zeta_{1}(\sigma))d\sigma\right)^{2}\label{eq:loc_dec_v}\\
 \ +\zeta'_{0}(2-r+t)^{2}+\zeta'_{0}(r+t)^{2}+\zeta_{1}(r+t)^{2}+\zeta_{1}(2-r+t)^{2} \\
  \ +e^{-2t}\zeta_{0}(1)^{2}.
\end{multline}
By the Cauchy-Schwarz inequality
\begin{multline*}
\left(\int_{1}^{2-r+t}e^{\sigma-t}(\zeta'_{0}(\sigma)+\zeta_{1}(\sigma))d\sigma\right)^{2}  \\
\leq\left(\int_{1}^{2-r+t}e^{\sigma-t}d\sigma\right)\left(\int_{1}^{2-r+t}e^{\sigma-t}(\zeta'_{0}(\sigma)+\zeta_{1}(\sigma))^{2}d\sigma\right)\\
 \lesssim_{R}\int_{1}^{2-r+t}e^{\sigma-t}(\zeta'_{0}(\sigma)^{2}+\zeta_{1}(\sigma)^{2})d\sigma,
\end{multline*}
and therefore,
\begin{multline*}
\int_{R-1}^{\infty}\int_{1}^{R}\left(\int_{1}^{2-r+t}e^{\sigma-t}(\zeta'_{0}(\sigma)+\zeta_{1}(\sigma))d\sigma\right)^{2}d\sigma drdt\\
\lesssim_{R}\int_{R-1}^{\infty}\int_{1}^{R}\int_{1}^{2-r+t}e^{\sigma-t}(\zeta'_{0}(\sigma)^{2}+\zeta_{1}(\sigma)^{2})\ d\sigma drdt\\
\lesssim_{R}\int_{R-1}^{\infty}\int_{1}^{\infty}e^{\sigma-t}(\zeta'_{0}(\sigma)^{2}+\zeta_{1}(\sigma)^{2})\indic_{\sigma\leq2+t}\ d\sigma dt\\
\ =\int_{1}^{\infty}\Big(\int_{R-1}^{\infty}e^{\sigma-t}\indic_{\sigma\leq2+t}dt\Big)(\zeta'_{0}(\sigma)^{2}+\zeta_{1}(\sigma)^{2})d\sigma\\
\ \leq\int_{1}^{\infty}\Big(\int_{\sigma-2}^{\infty}e^{\sigma-t}dt\Big)(\zeta'_{0}(\sigma)^{2}+\zeta_{1}(\sigma)^{2})d\sigma\\
\text{\ \ensuremath{\lesssim\int_{1}^{\infty}}}(\zeta'_{0}(\sigma)^{2}+\zeta_{1}(\sigma)^{2})d\sigma\lesssim\Vert u_{0}\Vert_{\dot{H}^1(B^c)}^{2}+\Vert u_{1}\Vert_{L^{2}}^{2},
\end{multline*}
where we used \eqref{eq:hardy} and the Cauchy-Schwarz inequality to obtain
the last bound. As $\int_{2-r+t}^{r+t}d\sigma\lesssim_{R}1,$ the
term coming from the second term in the first line \eqref{eq:loc_dec_v}
is handled in the same way. Moreover,
\[
\int_{R-1}^{\infty}\int_{1}^{R}\zeta'_{0}(2-r+t)^{2}drdt=\int_{1}^{R}\int_{R-1}^{\infty}\zeta'_{0}(2-r+t)^{2}dtdr\leq R\int_{1}^{\infty}\zeta'_{0}(s)^{2}ds\lesssim_{R}\Vert u_{0}\Vert_{\dot{H}^1(B^c)}^{2},
\]
and all the terms of the second line of \eqref{eq:loc_dec_v} are
dealt with similarly. Finally, the remark that, by Lemma \eqref{eq:u(1)},
\[
\zeta_{0}(1)^{2}=u_{0}(1)^{2}\lesssim\Vert u_{0}\Vert_{\dot{H}^1(B^c)}^{2},
\]
permits to handle the term coming from the third line of \eqref{eq:loc_dec_v}.
We just showed that
\[
\int_{R-1}^{+\infty}\int_{1}^{R}(\partial_{r}\zeta)^{2}+(\partial_{t}\zeta)^{2}+\zeta^{2}\ drdt\lesssim_{R}\Vert u_{0}\Vert_{\dot{H}^1(B^c)}^{2}+\Vert u_{1}\Vert_{L^{2}}^{2}.
\]
Dealing with the part $\int_{-\infty}^{-R+1}$ in the same way and
using \eqref{eq:loc_dec_comp}, the estimate \eqref{eq:loc_decay_v} on $\zeta$, 
and hence the proposition, follow.
\end{proof}

The integrability of the local energy allows us to obtain the following crucial global Strichartz estimates
for the Neumann flow:
\begin{prop}[{Strichartz estimates for the Neumann flow}]
\label{prop:Strichartz}For any couple $(p,q)$ verifying 
\begin{equation}
\frac{1}{p}+\frac{3}{q}=\frac{1}{2},\hspace{1em}\frac{3}{p}+\frac{2}{q}\leq1,\hspace{1em} 2<p\leq \infty,\hspace{1em}q<\infty,\label{eq:strich_adm}
\end{equation}
there exists a constant $C>0$ such that, for all $(u_0, u_1)\in\mathcal{H}(B^c)$
and all $f\in L^{1}(\mathbb{R},L_{\text{rad}}^{2}(r\geq1)),$ if $u$
verifies
\begin{align*}
\partial_{t}^{2}u-\Delta_{N}u & = f,\text{ in }B^c,\\
\partial_{n}u & =0\text{ on }\partial B(0,1),\\
\vec u _{\restriction t=0} & =(u_0, u_1),
\end{align*}
then, for all $T>0$
\[
\Vert u\Vert_{L^{p}([-T,T],L^{q}(r\geq1))}\leq C\Big(\Vert(u_0, u_1)\Vert_{\mathcal{H}(B^c)}+\Vert f\Vert_{L^{1}([-T,T],L_{\text{}}^{2}(r\geq1))}\Big).
\]
\end{prop}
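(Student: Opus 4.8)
The plan is to deduce the global Strichartz estimates for the Neumann flow from the corresponding estimates on $\mathbb{R}^3$ (which are classical for the free wave equation) by combining three ingredients already available: the explicit formula of Proposition \ref{prop:linear_group}, the local energy decay of Proposition \ref{prop:loc_decay}, and the abstract mechanism of Burq \cite{MR2001179} that turns local-in-time Strichartz estimates plus integrability of the local energy into global-in-time estimates. Concretely, one first needs local-in-time Strichartz estimates for the Neumann flow on $B^c$, which for radial data can be extracted from the general result of Blair--Smith--Sogge \cite{MR2566711} (or proved directly here via the one-dimensional representation of $\zeta=ru$). Then one splits the solution, away from and near the obstacle, using a cutoff: far from the obstacle the solution essentially solves the free wave equation on $\mathbb{R}^3$ and the Strichartz estimates there apply to the extension $\mathcal{P}u$; near the obstacle one only has to control a compact spatial region over all of time, and this is exactly what Proposition \ref{prop:loc_decay} provides.

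The key steps, in order, are as follows. First, record the local-in-time Strichartz inequality on a unit time interval, with the inhomogeneous term included via the Duhamel formula \eqref{eq:lin_grp_duhamel}; here the $\frac1r$ weights and the exponentially decaying kernels near $r=1$ are harmless, and the estimate is uniform in the position of the time interval by time-translation invariance of the equation. Second, fix a cutoff $\chi\in C_c^\infty$ equal to $1$ near $r=1$, write $u=\chi u+(1-\chi)u$, and observe that $(1-\chi)u$ satisfies a free wave equation on $\mathbb{R}^3$ up to a source term supported in the annulus where $\nabla\chi\neq 0$; for that piece one uses global Strichartz on $\mathbb{R}^3$ together with the commutator term, which is controlled in $L^1_tL^2_x$ precisely by Proposition \ref{prop:loc_decay}. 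Third, for $\chi u$, which is spatially localized in a fixed compact set, one interpolates the local energy bound of Proposition \ref{prop:loc_decay} with the local-in-time Strichartz estimate (chopping $\mathbb{R}$ into unit intervals and summing the $\ell^2$ in time estimates against the $\ell^2$-in-time local energy) to obtain the global $L^pL^q$ bound. Summing the two contributions, using that $\frac3p+\frac2q\le 1$ is exactly the range in which Burq's argument closes, gives the claimed estimate on $[-T,T]$ with constant independent of $T$.

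The main obstacle I expect is the bookkeeping around the boundary in the Duhamel formula \eqref{eq:lin_grp_duhamel} and in the reflected part of \eqref{eq:lin_grp_main}: the Neumann solution is not simply a reflection of the free solution — the boundary condition \eqref{eq:wd1eqpsiphi} produces the exponential convolution kernels $e^{s+\sigma-2}$ appearing for $s\le 1$ — so one must check that these extra contributions, which live in the region $r-t<1$ near the obstacle, do not spoil either the local-in-time Strichartz estimate or the interpolation argument. The saving grace is that these terms decay exponentially away from $r=1$ and are supported, at each time, in a region of bounded spatial size, so they are absorbed by the local energy decay estimate; making this precise is the only genuinely new point compared with the Dirichlet or obstacle-free situation. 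Once that is settled, the rest is the standard $TT^*$ / Christ--Kiselev and cutoff-gluing machinery, and the passage from the homogeneous to the inhomogeneous estimate follows by the usual Duhamel/Minkowski argument using $f\in L^1_tL^2_x$.
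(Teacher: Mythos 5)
Your proposal is correct and follows essentially the same route as the paper, whose proof simply invokes the local-in-time Strichartz estimates of Blair--Smith--Sogge \cite{MR2566711} together with the local energy decay of Proposition \ref{prop:loc_decay} and Burq's globalization result \cite{MR2001179}; the cutoff-gluing argument you sketch is just the interior of Burq's mechanism, which the paper treats as a black box. One small correction: the restriction $\frac{3}{p}+\frac{2}{q}\leq 1$ comes from the local-in-time estimates of \cite{MR2566711} (valid in general geometry), not from the globalization step, and the exponential boundary kernels in Proposition \ref{prop:linear_group} need no separate treatment since the local estimates already apply to the Neumann problem.
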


\begin{proof}
The main result of \cite{MR2001179} shows that the local energy decay
of Proposition \ref{prop:loc_decay} combined with local in time Strichartz
estimates implies global in time ones. Such local estimates where
shown by \cite{MR2566711} for the above range of couples $(p,q)$,
hence the proposition follows.
\end{proof}

Let us also recall the Strichartz estimates in $\mathbb R ^3$:

\begin{prop}[{Strichartz estimates in $\mathbb R ^3$, \cite{GV87, GV95}, \cite{LS}, \cite{KT}}]
\label{prop:Strichartz_free}For any couple $(p,q)$ verifying 
\begin{equation}
\frac{1}{p}+\frac{3}{q}=\frac{1}{2},\hspace{1em}\frac{1}{p}+\frac{1}{q}\leq\frac{1}{2},\hspace{1em}2<p\leq \infty,\hspace{1em}q<\infty,\label{eq:strich_adm_free}
\end{equation}
there exists a constant $C>0$ such that, for all $(u_0, u_1)\in\mathcal H(\mathbb R ^3)$
and all $f\in L^{1}(\mathbb{R},L^{2}(\mathbb R ^3)),$ if $v$
verifies
\begin{align*}
\partial_{t}^{2}v-\Delta v & = f,\\
\vec v _{\restriction t=0} & =(v_0, v_1),
\end{align*}
then, for all $T>0$
\[
\Vert v\Vert_{L^{p}([-T,T],L^{q}(\mathbb R ^3))}\leq C\Big(\Vert(v_0, v_1)\Vert_{\mathcal{H}(\mathbb R ^3)}+\Vert f\Vert_{L^{1}([-T,T],L_{\text{}}^{2}(\mathbb R ^3))}\Big).
\]
\end{prop}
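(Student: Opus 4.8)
This is the classical Strichartz estimate for the wave equation on $\mathbb R^3$, so the plan is to recall the standard $TT^*$ proof following \cite{GV87, GV95}, \cite{LS}, \cite{KT}. First I would reduce to the homogeneous estimate: writing $v(t)=\cos(t|\nabla|)v_0+\frac{\sin(t|\nabla|)}{|\nabla|}v_1+\int_0^t\frac{\sin((t-s)|\nabla|)}{|\nabla|}f(s)\,ds$, it suffices to prove, uniformly in $T$, the bound $\|e^{\pm it|\nabla|}g\|_{L^p([-T,T],L^q(\mathbb R^3))}\lesssim \|g\|_{\dot H^1(\mathbb R^3)}$ for the two half-wave propagators, and then deduce the inhomogeneous bound. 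Note the scaling bookkeeping: for $(p,q)$ satisfying \eqref{eq:strich_adm_free} one has $3\big(\tfrac12-\tfrac1q\big)-\tfrac1p=1$, so $\dot H^1$ is indeed the scaling-invariant data space.

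Second, to prove the homogeneous half-wave estimate I would perform a Littlewood--Paley decomposition $g=\sum_{k\in\mathbb Z}P_kg$ and, using the parabolic scaling $x\mapsto 2^{-k}x$, $t\mapsto 2^{-k}t$, reduce each dyadic piece to unit frequency. The core analytic input is the frequency-localized dispersive estimate $\|e^{it|\nabla|}P_0 f\|_{L^\infty(\mathbb R^3)}\lesssim (1+|t|)^{-1}\|f\|_{L^1(\mathbb R^3)}$, obtained by stationary phase applied to the oscillatory integral defining the kernel of $e^{it|\nabla|}P_0$; the exponent $(n-1)/2=1$ reflects the nonvanishing curvature of the light cone in dimension $3$. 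Interpolating with the trivial $L^2\to L^2$ bound gives $\|e^{it|\nabla|}P_0 f\|_{L^q}\lesssim |t|^{-(1-2/q)}\|f\|_{L^{q'}}$ for $2\le q\le\infty$.

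Third comes the $TT^*$ argument: setting $U(t)=e^{it|\nabla|}P_0$, one has $\|U(\cdot)f\|_{L^p_tL^q_x}^2=\langle\, \int U(t)U(s)^*(\cdot)\,ds,\ f\,\rangle$, and combining the dispersive bound above with the Hardy--Littlewood--Sobolev inequality in the time variable controls this by $\|f\|_{L^2}^2$ precisely when $\tfrac1p+\tfrac1q\le\tfrac12$ with $2<p$, $q<\infty$, which is exactly \eqref{eq:strich_adm_free}; the excluded endpoint $(p,q)=(2,\infty)$ is why $q<\infty$ is imposed. Summing over the Littlewood--Paley pieces is legitimate since $p,q\ge 2$: one invokes the square-function inequality in $L^q(\mathbb R^3)$, Minkowski's inequality to pull the $\ell^2_k$ sum inside $L^p_tL^q_x$, almost-orthogonality in $L^2$, and finally $\big(\sum_k 2^{2k}\|P_kg\|_{L^2}^2\big)^{1/2}\sim\|g\|_{\dot H^1}$. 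This yields the homogeneous estimate (and the $q<\infty$ case away from the endpoint is the only one needed here).

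Finally, the inhomogeneous estimate follows with no further difficulty because the source is measured in $L^1$ in time. Minkowski's integral inequality gives $\big\|\int_0^t\frac{\sin((t-s)|\nabla|)}{|\nabla|}f(s)\,ds\big\|_{L^p_tL^q_x}\le\int_{\mathbb R}\big\|\indic_{\{s<t\}}\tfrac{\sin((t-s)|\nabla|)}{|\nabla|}f(s)\big\|_{L^p_tL^q_x}\,ds\lesssim\int_{\mathbb R}\|f(s)\|_{L^2}\,ds$, the cutoff $\indic_{\{s<t\}}$ only shrinking the $L^p_t$ norm, so that even the Christ--Kiselev lemma is unnecessary. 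I expect the main obstacle to be the second step, namely establishing the sharp dispersive decay for the conic propagator by stationary phase, together with the careful bookkeeping of the $TT^*$/Littlewood--Paley summation in the third step; the reduction in the first step and the inhomogeneous passage in the last are soft.
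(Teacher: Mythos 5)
Your sketch is the classical $TT^*$ proof (reduction to the half-wave propagators, frequency-localized dispersive estimate with decay $(1+|t|)^{-1}$, Hardy--Littlewood--Sobolev in time, Littlewood--Paley summation using $p,q\geq 2$, and Minkowski for the $L^1_tL^2_x$ forcing, which indeed makes Christ--Kiselev unnecessary), and it is essentially correct; the only bookkeeping point is that for pairs strictly inside the admissible region $\tfrac1p+\tfrac1q<\tfrac12$ one should use the full $(1+|t|)^{-\gamma}$ form of the localized decay (or Sobolev embedding from the sharp line), and $p=\infty$ is just the energy estimate. The paper itself gives no proof of this proposition, stating it as a known result of Ginibre--Velo, Lindblad--Sogge and Keel--Tao, and your argument is exactly the one in those references.
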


\begin{remark}
Observe the loss in the range of admissibles couples (\ref{eq:strich_adm}) in Proposition \ref{prop:Strichartz} compared to the free case (\ref{eq:strich_adm_free}). This is because we used the local-in-time Strichartz estimates of \cite{MR2566711}, valid in a general geometrical setup. It is likely that the above Strichartz estimates, outside a ball, could be extended to the full range of couples (\ref{eq:strich_adm_free}), using for the local-in-time estimates a construction similar to the one done by \cite{SS95} for Dirichlet boundary conditions. However, the range of exponents (\ref{eq:strich_adm}) is sufficient for our analysis and we don't pursue this question here.

\end{remark}

As a last consequence of the explicit formula for the linear group given by Proposition \ref{prop:linear_group}, we have
\begin{lem} \label{lem:interpr_neumann}
Let  $(u_0, u_1) \in (C^1 \times C^0) \cap \mathcal H(B^c)$. Then
\mbox{} \begin{enumerate}
\item we have
$$
S_N(\cdot) (u_0, u_1) \in C^0(\mathbb R \times  B^c) \cap C^1(\{ t \pm r \neq 1 \}),
$$
with
$$
\partial_r \big( S_N (u_0, u_1) \big) (1,t) = 0 \hspace{0.3cm} \forall t \neq 0,
$$
\item if in addition $f\in L^1(\mathbb R, L^2( B^c))$ is radial and continuous and $u$ is defined by
$$
u(t) := S_N(t)(u_0, u_1) + \int_0^t S_N(t-\tau) (0, f(\tau)) \ d\tau,
$$
then $u \in C^0(\mathbb R \times  B^c) \cap C^1(\{ t \pm r \neq 1 \})$ and
$$
\partial_r u (1,t) = 0 \hspace{0.3cm} \forall t \neq 0.
$$
\end{enumerate}
\end{lem}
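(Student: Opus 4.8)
The plan is to read both assertions off the explicit representation formulas of Proposition~\ref{prop:linear_group}. The point is that, up to composition with the smooth maps $r\mapsto1/r$ and $(t,r)\mapsto r\mp t$, the solution is built out of the functions $\varphi_\pm$ (for the linear part) and out of elementary Duhamel integrals of $f$ (for the inhomogeneous part), and the regularity of these building blocks is transparent from the formulas.

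For~(1), write $(\zeta_0,\zeta_1)=(ru_0,ru_1)$, so that $\zeta_0\in C^1([1,\infty))$ and $\zeta_1\in C^0([1,\infty))$. I would first check that $\varphi_+\in C^0(\RR)\cap C^1(\RR\setminus\{1\})$: on $(1,\infty)$ this is immediate from \eqref{eq:phirg1}, with $\varphi_+'=\tfrac12\zeta_0'-\tfrac12\zeta_1$ continuous; on $(-\infty,1)$ one differentiates \eqref{phirl1} term by term, the integral term being treated by the Leibniz rule (its integrand is jointly continuous and $C^1$ in $s$, the endpoint $2-s$ is smooth, so the derivative is continuous); and continuity at $s=1$ holds because both one-sided values equal $\tfrac12\zeta_0(1)$. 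The same applies to $\varphi_-$. Since $S_N(u_0,u_1)(t,r)=\tfrac1r\big(\varphi_+(r-t)+\varphi_-(r+t)\big)$ by \eqref{eq:lin_grp_main}, continuity on $\RR\times B^c$ and $C^1$ regularity on $\{r-t\neq1\}\cap\{r+t\neq1\}$ (the set denoted $\{t\pm r\neq1\}$ in the statement) follow at once. For the boundary condition, fix $t\neq0$; then $1\mp t\neq1$, so $S_N(u_0,u_1)$ is $C^1$ near $(t,1)$ and
$$\partial_r\big(S_N(u_0,u_1)\big)(t,1)=-\big(\varphi_+(1-t)+\varphi_-(1+t)\big)+\varphi_+'(1-t)+\varphi_-'(1+t),$$
which vanishes by \eqref{eq:wd1eqpsiphi}: that identity holds for all $t\neq0$ for the explicit $\varphi_\pm$, since the formulas \eqref{phirl1}--\eqref{eq:psirl1} were obtained precisely by integrating \eqref{eq:wd1eqpsiphi}, and can also be verified by direct differentiation.

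For~(2), by~(1) it suffices to prove the claims for $w(t):=\int_0^tS_N(t-\tau)(0,f(\tau))\,d\tau$ (the case $t\le0$ being symmetric), and I would split $\{t\ge0,\ r\ge1\}$ along the characteristic $\{r-t=1\}$. In the region $r-t\ge1$, finite speed of propagation --- or a direct inspection of \eqref{eq:phirg1}--\eqref{psirg1} applied to the data $(0,f(\tau))$ --- gives $rw(t,r)=\tfrac12\int_0^t\int_{r-t+\tau}^{r+t-\tau}\sigma f(\tau,\sigma)\,d\sigma\,d\tau$; since $f$ is continuous, differentiating under the integral (the upper endpoint $\tau=t$ contributes $\int_r^r=0$) shows $rw$, hence $w$, is $C^1$ there. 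In the region $r-t<1$ one uses the Duhamel identity \eqref{eq:lin_grp_duhamel}; differentiating it in $t$ and in $r$, the contributions from the moving endpoint $\tau=1-r+t$ of the two outer integrals cancel (both inner integrals reduce there to $\int_1^{2r-1}\sigma f(1-r+t,\sigma)\,d\sigma$), and the remaining terms are continuous because $f$ is, so $w$ is $C^1$ on this open region as well. As $r-t\to1^-$ the outer integral $\int_0^{1-r+t}$ in \eqref{eq:lin_grp_duhamel} vanishes and the formula collapses to the one of the free region, so the two expressions agree continuously along $\{r-t=1\}$; this yields $w\in C^0(\RR\times B^c)\cap C^1(\{t\pm r\neq1\})$. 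Finally, $\partial_r w(1,t)=0$ for $t\neq0$ either by differentiating \eqref{eq:lin_grp_duhamel} in $r$ at $r=1$, or by passing $\partial_r$ under the integral defining $w$ and invoking~(1) for the data $(0,f(\tau))\in(C^1\times C^0)\cap\mathcal H(B^c)$; adding the contribution of $S_N(u_0,u_1)$ from~(1) concludes.

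The only laborious point is the bookkeeping in the region $r-t<1$ of part~(2): differentiating \eqref{eq:lin_grp_duhamel}, tracking the three moving inner endpoints together with the moving outer endpoint $\tau=1-r+t$, checking the cancellations and the continuity of what survives, and verifying both the gluing to the free-region formula along $\{r-t=1\}$ and the explicit vanishing of $\partial_r w$ at $r=1$. Everything else is a routine reading of the representation formulas.
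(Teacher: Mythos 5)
Your argument is correct and follows essentially the paper's own (very terse) proof, which simply reads part (1) off the explicit formulas \eqref{eq:lin_grp_main}, \eqref{eq:phirg1}--\eqref{eq:psirl1} (including the verification that \eqref{eq:wd1eqpsiphi} holds for all $t\neq 0$, giving the Neumann condition) and part (2) off the Duhamel identity \eqref{eq:lin_grp_duhamel}; you have just carried out the computations in full. One bookkeeping caveat: as printed, \eqref{eq:lin_grp_duhamel} appears to be missing a factor $\tfrac12$ in front of its second inner integral (plugging $\zeta_0\equiv 0$, $\zeta_1(\sigma)=\sigma f(\tau,\sigma)$ into \eqref{phirl1} and \eqref{psirg1} yields $\int_1^{2-r+t-\tau}e^{r-t+\tau+\sigma-2}\sigma f\,d\sigma+\tfrac12\int_{2-r+t-\tau}^{r+t-\tau}\sigma f\,d\sigma$), and it is with this corrected constant that the cancellation you invoke of the $\tau=1-r+t$ endpoint contributions --- equivalently, the continuity of $\tau\mapsto r\,\big(S_N(t-\tau)(0,f(\tau))\big)(r)$ across that point, which is part (1) applied to the data $(0,f(\tau))$ --- holds exactly as you describe.
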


\begin{proof}
The explicit formulas (\ref{eq:lin_grp_main}), (\ref{eq:phirg1}),  (\ref{psirg1}),  (\ref{phirl1}),  (\ref{eq:psirl1}) give the first part of the lemma, and  (\ref{eq:lin_grp_duhamel}) then gives the second part for $t>0$. The case $t<0$ is given by a similar computation.
\end{proof}

\subsection{Perturbative theory}
\begin{defn}
 \label{def:scattering}
We say that a solution $u$ of the nonlinear wave equation \eqref{NLWrad}, with Neumann boundary conditions  \eqref{NBDrad} \emph{scatters in the future}\ednote{Thomas: ce n'est pas grave, mais l'instruction $\backslash emph $ provoque un soulignement, ce qui n'est pas le cas d'habitude avec amsart. Sais-tu pourquoi? {\color{pgreen} C'était le package ulem, qui permet de barrer, souligner, etc... que j'avais ajouté à un moment pour mettre en avant les modifications, qui modifiait ce comportement... je l'ai enlevé!}} when there exists a solution $u_L$ of the linear wave equation \eqref{LWrad} with Neumann boundary conditions \eqref{NBDrad} such that 
$$\lim_{t\to+\infty}\left\|\vec{u}(t)-\vec{u}_L(t)\right\|_{\HHH (B^c)}=0.$$
We define similarly \emph{scattering in the past}. We say that the solution \emph{scatters} when it scatters both in the future and in the past.
\end{defn}
In a classical way, we have
\begin{prop}
\label{prop:perturb_scat}Let $(u_0,u_1)\in\mathcal{H}(B^c)$ and $u(t)=\mathscr{S}_N(t)(u_0,u_1)$.
\begin{equation}
u\in L^{5}\left([0,+\infty),L^{10}\right)\implies u \text{ scatters in the future}.\label{eq:pert_scat_carac}
\end{equation}
A similar property holds in the past.
Moreover, there exists $\epsilon_{0}>0$ such that, for any $(u_0,u_1)\in\mathcal{H}(B^c)$,
\begin{equation}
\Vert(u_0,u_1)\Vert_{\mathcal{H}(B^c)}\leq\epsilon_{0}\implies\mathscr{S}_{N}(\cdot)(u_0,u_1)\in L^{5}L^{10},\label{eq:pert_scat_small}
\end{equation}
and $\mathscr{S}_{N}(\cdot)(u_0,u_1)$ scatters.
 And, for any $(u_0,u_1)\in\mathcal{H}(B^c)$,
 there exists a solution $U^{\pm} \in L^5(\mathbb R_{\pm}, L^{10})$ of (\ref{NLWrad})-(\ref{NBDrad}) such that
 \begin{equation}
 \Vert \vec{U}^{\pm}(t)-\vec{S}_{N}(t)(u_0,u_1)\Vert_{\mathcal{H}(B^c)}\longrightarrow0,\text{ as }t\longrightarrow\pm\infty.\label{eq:pert_scat_Uj}
 \end{equation}
\end{prop}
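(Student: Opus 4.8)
The plan is to prove Proposition~\ref{prop:perturb_scat} by adapting the standard small-data/perturbation theory for the energy-critical wave equation (as in \cite{KeMe08}) to the Neumann setting, where the key technical inputs---the Strichartz estimates of Proposition~\ref{prop:Strichartz} and the asymptotic comparison with the free flow of Proposition~\ref{prop:linear_scattering}---are already at hand. First I would set up the nonlinear Duhamel formulation: a solution $u$ of \eqref{NLWrad}, \eqref{NBDrad}, \eqref{IDrad} on an interval $I\ni 0$ satisfies
\[
u(t)=S_N(t)(u_0,u_1)-\int_0^t S_N(t-\tau)\bigl(0,u^5(\tau)\bigr)\,d\tau.
\]
Using $\frac{1}{p}+\frac{3}{q}=\frac12$ one checks that the pair $(p,q)=(5,10)$ is admissible in the sense of \eqref{eq:strich_adm} (indeed $\frac{3}{5}+\frac{2}{10}=\frac45\le 1$), as is the ``energy pair'' $(p,q)=(\infty,6)$, and the nonlinearity obeys the H\"older bound $\|u^5\|_{L^1L^2}\lesssim \|u\|_{L^5L^{10}}^5$ on any subinterval. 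A standard contraction-mapping argument in the ball of $L^5L^{10}$ of small radius then gives \eqref{eq:pert_scat_small}: for $\|(u_0,u_1)\|_{\HHH(B^c)}\le\eps_0$ small, $\|S_N(\cdot)(u_0,u_1)\|_{L^5L^{10}}\le C\eps_0$ by Strichartz, the map $\Phi(u)=S_N(u_0,u_1)-\int_0^\cdot S_N(\cdot-\tau)(0,u^5)\,d\tau$ is a contraction, and one obtains a global solution in $L^5L^{10}$.

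Next I would prove the characterization \eqref{eq:pert_scat_carac}. Suppose $u\in L^5([0,\infty),L^{10})$; by Strichartz applied with the energy pair, $\vec u\in C([0,\infty),\HHH(B^c))$ and $t\mapsto \vec u(t)$ has a limit in $\HHH(B^c)$ along the Duhamel formula: writing $(w_0,w_1):=(u_0,u_1)-\int_0^{\infty}\vec S_N(-\tau)(0,u^5(\tau))\,d\tau$ (the integral converges in $\HHH(B^c)$ since $u^5\in L^1([0,\infty),L^2)$ and $S_N$ is bounded on $\HHH(B^c)$), one gets $\|\vec u(t)-\vec S_N(t)(w_0,w_1)\|_{\HHH(B^c)}\to 0$. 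Thus $u$ scatters in the future to the linear Neumann solution with data $(w_0,w_1)$, which is exactly Definition~\ref{def:scattering}. The converse-type statements and the past analogue are identical after time reversal. The existence of $U^\pm$ in \eqref{eq:pert_scat_Uj} (wave operators) follows the same scheme run backwards from $t=\pm\infty$: given $(u_0,u_1)$, solve the integral equation
\[
U^+(t)=S_N(t)(u_0,u_1)+\int_{+\infty}^t S_N(t-\tau)\bigl(0,(U^+)^5(\tau)\bigr)\,d\tau
\]
for $t$ large, using that $\|S_N(\cdot)(u_0,u_1)\|_{L^5([T,\infty),L^{10})}\to 0$ as $T\to\infty$ (a consequence of the global Strichartz bound and dominated convergence) to make the contraction work on $[T,\infty)$ for $T$ large, then extend $U^+$ to a maximal interval by local theory; \eqref{eq:pert_scat_Uj} is read off from the Duhamel formula exactly as above.

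The main obstacle, and the only place where the Neumann setting genuinely differs from $\RR^3$, is making sure that every estimate used is licensed by the \emph{restricted} admissible range \eqref{eq:strich_adm} rather than the full free range \eqref{eq:strich_adm_free}: one must check that $(5,10)$ and $(\infty,6)$ both satisfy $\frac{3}{p}+\frac{2}{q}\le 1$ (they do, with equality for $(5,10)$), so that the contraction and the nonlinear estimate close without appealing to any forbidden pair. A secondary, more bookkeeping-type point is that $S_N$ preserves $\HHH(B^c)$ and that the Duhamel integrals converge in $\HHH(B^c)$ as Bochner integrals---this is immediate from energy conservation for the linear Neumann flow (which gives $\|\vec S_N(t)(f,g)\|_{\HHH(B^c)}=\|(f,g)\|_{\HHH(B^c)}$) together with $u^5\in L^1([0,\infty),L^2)$. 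Everything else is the routine fixed-point/continuation argument, so I would state these verifications explicitly and refer to \cite{KeMe08} for the remaining details.
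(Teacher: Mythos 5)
Your proposal is correct and follows essentially the same route as the paper, whose proof is only a sketch: it notes that $(5,10)$ is admissible for the Neumann Strichartz estimates of Proposition~\ref{prop:Strichartz}, derives \eqref{eq:pert_scat_carac} and \eqref{eq:pert_scat_small} as classical consequences of the global-in-time estimates, and obtains \eqref{eq:pert_scat_Uj} by a fixed point argument. Your write-up simply supplies the standard Duhamel/contraction details (including the check that $(5,10)$ lies in the restricted range \eqref{eq:strich_adm}), which is exactly what the paper leaves implicit.
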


\begin{proof}[{Sketch of proof}]
Observe that $(5,10)$ is Strichartz-admissible in the sense of Proposition
\ref{prop:Strichartz}. The properties \eqref{eq:pert_scat_carac} and \eqref{eq:pert_scat_small}
are then classical consequences of the global in time Strichartz estimates.
Finally, \eqref{eq:pert_scat_Uj} can be proved by a fixed point argument using the Strichartz estimates.
\end{proof}
In addition,
\begin{prop}[Perturbation] \ednote{\color{pgreen}chang\'e $u$ et $v$ pour $u, \tilde u$ et amélioré la presentation}
\label{lem:perturb}For any $M>0$, there exists $\epsilon(M)>0$
such that, for any $0<\epsilon\leq\epsilon(M)$, and all $(u_0, u_1), \,(\tilde u_0, \tilde u_1)\in\mathcal{H}(B^c)$,
$e\in L^{1}L^{2}$ and $u\in L^{5}L^{10}$ verifying 
\[
\Vert u\Vert_{L^{5}L^{10}}\leq M,\hspace{1em}\Vert S_N(\cdot)\big((u_0, u_1)-(\tilde u_0, \tilde u_1)\big)\Vert_{L^5L^{10}}\leq\epsilon,\hspace{1em}\Vert e\Vert_{L^{1}L^{2}}\leq\epsilon,
\]
if $u,\tilde u$ are solutions of 
\begin{equation*}
\begin{tabular}{cc}
$
\left\{
\begin{aligned}
\partial_{t}^{2}u-\Delta_{N}u&=-u^{5}\text{ in }B(0,1)^{c}, \\
\vec u_{\restriction t=0}&=(u_0,u_1),\\
\partial_{n}u&=0\text{ on }\partial B(0,1),
\end{aligned} \right.
$
&
$
\left\{
\begin{aligned}
\partial_{t}^{2}\tilde u-\Delta_{N}\tilde u&=-\tilde u^{5} + e\text{ in }B(0,1)^{c}, \\
\vec {\tilde u}_{\restriction t=0}&=(\tilde u_0,\tilde u_1),\\
\partial_{n}\tilde u&=0\text{ on }\partial B(0,1),
\end{aligned}\right.
$
\end{tabular}
\end{equation*}
then $\tilde u\in L^{5}L^{10}$ and we have 
\[
\Vert u-\tilde u\Vert_{L^{5}L^{10}}\lesssim\epsilon.
\]
In addition, the same statement holds for the corresponding equations in $\mathbb{R}^{3}$.
\end{prop}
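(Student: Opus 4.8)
The plan is to run the standard Kenig--Merle long-time perturbation argument, taking as the central tool the global Strichartz estimates for the Neumann flow (Proposition \ref{prop:Strichartz}), which make the analysis essentially identical to the $\mathbb{R}^3$ case carried out in \cite{KeMe06, KeMe08}. First I would set $w := \tilde u - u$, which solves $\partial_t^2 w - \Delta_N w = -(\tilde u^5 - u^5) + e$ with $\partial_n w = 0$ and $\vec w_{\restriction t=0} = (\tilde u_0 - u_0, \tilde u_1 - u_1)$, and I would write this in Duhamel form
\[
w(t) = S_N(t)\big((\tilde u_0,\tilde u_1) - (u_0,u_1)\big) + \int_0^t S_N(t-\tau)\big(0,\; -(\tilde u^5-u^5)(\tau) + e(\tau)\big)\, d\tau .
\]
The hypothesis bounds the first term by $\epsilon$ in $L^5L^{10}$ directly, so everything reduces to controlling the nonlinear Duhamel term by Proposition \ref{prop:Strichartz} with the admissible pair $(5,10)$ together with the Hölder bound $\|\tilde u^5 - u^5\|_{L^1L^2} \lesssim (\|u\|_{L^5L^{10}}^4 + \|w\|_{L^5L^{10}}^4)\|w\|_{L^5L^{10}}$ on each subinterval.

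Second, since the global smallness needed to close the estimate directly is not available (we only know $\|u\|_{L^5L^{10}} \le M$, with $M$ possibly large), I would partition $\mathbb{R} = \bigcup_{j=1}^{J} I_j$ into $J = J(M)$ consecutive intervals on each of which $\|u\|_{L^5(I_j,L^{10})} \le \eta$ for a small absolute constant $\eta$ to be fixed; the number $J$ depends only on $M$. On $I_1 = [0,T_1]$ one has, by the above,
\[
\|w\|_{L^5(I_1,L^{10})} \le C\epsilon + C\eta^4 \|w\|_{L^5(I_1,L^{10})} + C\|w\|_{L^5(I_1,L^{10})}^5 ,
\]
and a standard continuity/bootstrap argument (using that $w$ is small near $t=0$) gives $\|w\|_{L^5(I_1,L^{10})} \lesssim \epsilon$ and also $\|\vec w(T_1)\|_{\mathcal H(B^c)} \lesssim \epsilon$, provided $\eta$ is chosen small enough and $\epsilon \le \epsilon(M)$ small enough. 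Then I would iterate: on $I_{j+1}$ the free evolution of the data at time $\sum_{k\le j} T_k$ is the free evolution of the original difference plus the accumulated Duhamel error, so its $L^5L^{10}$ norm is $\lesssim_j \epsilon$; repeating the one-interval estimate and summing over $j = 1, \dots, J$ yields $\|w\|_{L^5L^{10}} \le C(M)\epsilon$. Finally, replacing the constant $\epsilon(M)$ by $\epsilon(M)/C(M)$ absorbs the $M$-dependent constant and gives the stated bound $\|u - \tilde u\|_{L^5L^{10}} \lesssim \epsilon$; in particular $\tilde u \in L^5L^{10}$.

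The main obstacle — and the only place the Neumann setting enters nontrivially — is making sure the global Strichartz estimate of Proposition \ref{prop:Strichartz} is genuinely available for the perturbed equation, i.e.\ that the forcing $-\tilde u^5 + e$ lies in $L^1L^2$ uniformly over the iteration and that the restricted range of admissible exponents in \eqref{eq:strich_adm} still contains $(5,10)$ (it does, since $1/5 + 3/10 = 1/2$ and $3/5 + 2/10 = 4/5 \le 1$). Once this is in hand, every other step is word-for-word the classical argument, and the final sentence — that the same statement holds for the $\mathbb{R}^3$ equations — follows identically using Proposition \ref{prop:Strichartz_free} in place of Proposition \ref{prop:Strichartz}.
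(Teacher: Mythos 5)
Your argument is correct, but it handles the large-$M$ issue by a different mechanism than the paper. You run the classical Kenig--Merle long-time perturbation scheme: partition $\mathbb{R}$ into $J(M)$ intervals on which $\|u\|_{L^5(I_j,L^{10})}\leq\eta$, close a bootstrap on each interval using the Neumann Strichartz estimate of Proposition \ref{prop:Strichartz} with the admissible pair $(5,10)$, and propagate the error by induction, with a constant growing geometrically in $J(M)$. The paper instead writes a single estimate for $w=u-\tilde u$ on $(-T,T)$ and, rather than splitting time, invokes the Gr\"onwall-type lemma of \cite[Lemma 8.1]{MR2838120} with exponent $\gamma=5$ and weight $f=C\|u(t)\|_{L^{10}}^4$, which yields a bound uniform in $T$ with explicit constant $\Phi(CM^4)=2\Gamma(3+2CM^4)$, followed by one continuity argument in $T$ to absorb the quintic term and send $T\to\infty$. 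The Gr\"onwall route buys a cleaner single inequality and no bookkeeping of accumulated Duhamel errors across intervals; your induction route is more elementary (no abstract Gr\"onwall lemma needed) and is the standard template, at the cost of tracking, as you correctly do, that the hypothesis controls only $\|S_N(\cdot)((u_0,u_1)-(\tilde u_0,\tilde u_1))\|_{L^5L^{10}}$ and not the $\mathcal{H}(B^c)$ norm of the data difference, so that on later intervals the ``initial'' term must be split into the global free evolution of the original difference plus the free evolution of the accumulated Duhamel contributions, the latter estimated in $\mathcal{H}(B^c)$ through their $L^1L^2$ sources. Both proofs rest on the same key ingredient, the global-in-time Strichartz estimate for the Neumann flow, and both give the conclusion with an $M$-dependent implicit constant, which is all the statement requires.
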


\begin{proof}
The proof is classical and similar to Proposition 4.7 of \cite{MR2838120}
, we give it for completeness. Let us denote $w=u-\tilde u$. Then $w$ is
solution of
\begin{gather*}
\partial_{t}^{2}w-\Delta_{N}w=-u^{5}+\tilde u^{5}-e,\hspace{1em}\vec w_{\restriction t=0}=(u_0, u_1)-(\tilde u_0, \tilde u_1).
\end{gather*}
Let $T>0$. By the Strichartz inequality for the Neumann flow (Proposition
\ref{prop:Strichartz}) applied to $w$, we get, with an implicit constant
independent of $T$
\begin{align*}
\Vert u-\tilde u\Vert_{L^{5}(-T,T)L^{10}} & \lesssim\Vert \tilde u^{5}-u^{5}\Vert_{L^{1}(-T,T)L^{2}}+\Vert e\Vert_{L^{1}L^{2}}\\ & \hspace{1em}+\Vert S_N(\cdot)((u_0, u_1)-(\tilde u_0, \tilde u_1))\Vert_{L^5L^{10}}\\
 & \lesssim\Vert|u-\tilde u|(|u|^{4}+|u-\tilde u|^{4})\Vert_{L^{1}((-T,T)L^{2})}+\Vert e\Vert_{L^{1}L^{2}}\\
 & \hspace{1em}+\Vert S_N(\cdot)((u_0, u_1)-(\tilde u_0, \tilde u_1))\Vert_{L^5L^{10}}\\
 & \leq C\Big(\int_{-T}^{T}\Vert u-\tilde u\Vert_{L^{10}}\Vert u\Vert_{L^{10}}^{4}+\Vert u-\tilde u\Vert_{L^{5}((-T,T),L^{10})}^{5}\\
 & \hspace{1em}+\Vert e\Vert_{L^{1}L^{2}}+\Vert S_N(\cdot)((u_0, u_1)-(\tilde u_0, \tilde u_1))\Vert_{L^5L^{10}}\Big).
\end{align*}
We apply the Gr\"onwall-type lemma of \cite[Lemma 8.1]{MR2838120}, with 
\begin{gather*}
\varphi=\Vert u-\tilde u\Vert_{L^{10}},\;\gamma=5,\;f=C\Vert u\Vert_{L^{10}}^{4},\;\beta =1, \\
\eta=C\big(\Vert u-\tilde u\Vert_{L^{5}((-T,T),L^{10})}^{5}+\Vert e\Vert_{L^{1}L^{2}}+\Vert S_N(\cdot)((u_0, u_1)-(\tilde u_0, \tilde u_1))\Vert_{L^5L^{10}}\big).
\end{gather*}
We obtain, for all $T>0$ 
\begin{multline*}
\Vert u-\tilde u\Vert_{L^{5}((-T,T),L^{10})}  \leq\Big(\Vert e\Vert_{L^{1}L^{2}}+\Vert    S_N(\cdot)((u_0, u_1)-(\tilde u_0, \tilde u_1)) \Vert_{L^5L^{10}}\\  +\Vert u-\tilde u\Vert_{L^{5}((-T,T),L^{10})}^{5}\Big)
\times\Phi(CM^{4}),
\end{multline*}
where $\Phi(s)=2\Gamma(3+2s)$, $\Gamma$ being the Gamma function. Let $C_M:=6\Phi(CM^4)$ and $\epsilon(M)>0$ be sufficiently small so that, for any $\epsilon\leq\epsilon(M)$
\[
\epsilon^{5}C_M^5\leq \epsilon,\ \text{i.e. }\epsilon\leq 1/C_M^{5/4}.
\]
Then, given $T>0$ so that $\Vert u-\tilde u\Vert_{L^{5}((-T,T),L^{10})}\leq C_M\epsilon$,
we have 
\[
\Vert u-\tilde u\Vert_{L^{5}((-T,T),L^{10})}\leq \Phi(CM^4)\left(2\epsilon+C_M^5\epsilon^5\right),
\]
and thus $\Vert u-\tilde u\Vert_{L^{5}((-T,T),L^{10})}\leq 3 \Phi(CM^4) \epsilon\leq \frac 12 C_M\epsilon$. It
easily follows that we can make $T$ goes to infinity, thus $\Vert u-\tilde u\Vert_{L^{5}(\mathbb{R},L^{10})}\leq \frac 12 C_M\epsilon$
and the lemma follows. The same proof works for the problem in $\mathbb{R}^{3}$
using the corresponding Strichartz estimates.
\end{proof}

\section{Comparison between Neumann and $\mathbb{R}^{3}$ evolutions for dilating
profiles}
\label{section:dilating}

Let us introduce the following notation for the scaling associated to the equation
\begin{defn}
For $\lambda>0$, $\sigma_{\lambda}$ denotes the rescaling\ednote{Thomas: on a parfois besoin de rescaler juste le premier terme. J'ai utilis\'e la m\^eme notation, je ne pense pas qu'il y ait de risque de confusion} on $\dot{H}^1(\RR^3)$, given by
$$\sigma_{\lambda}(f)=\frac{1}{\lambda^{1/2}}f\left(\frac{\cdot}{\lambda}\right)$$
and 
on $\mathcal H (\mathbb{R}^{3})$
given by 
\[
\sigma_{\lambda}(f,g):=\left( \frac{1}{\lambda^{1/2}}f\left(\frac{\cdot}{\lambda}\right),\frac{1}{\lambda^{3/2}}g\left(\frac{\cdot}{\lambda}\right)\right).
\]
\end{defn}

The aim of this section is to show that a dilating profile ($\lambda \rightarrow \infty$) does not
see the obstacle, in the sense that for such profiles, the associated
Neumann and $\mathbb{R}^{3}$ evolutions are asymptoticaly the same.

\begin{lem}[Comparison of linear evolutions for dilating profiles]
\label{lem:dil_lin}Let ${{\vec\psi}}\in \mathcal H (\mathbb{R}^{3})$\ednote{\color{pgreen}Chang\'e $\varphi$ en $\psi$ pour être coherent avec la nouvelle notation des profils},
$f\in L^1(\mathbb R, L^2(\mathbb R ^3))$ be radial,
$(\lambda_{n})_{n\geq1}$ a sequence of
positive real numbers such that $\lambda_{n}\longrightarrow+\infty$, $(t_{n})_{n\geq 1}$ a sequence of
times, $v$ be the solution in the sense of Duhamel of
\begin{align*}
\partial_t^2v-\Delta v&=f\quad\text{in }\RR^3, \\
\vec{v}_{\restriction t=0}&=\vec{\psi},
\end{align*}
and $v_n := \sigma_{\lambda_{n}} v$.
Finally, { let
$f_{n}:=\frac{1}{\lambda_{n}^{\frac{5}{2}}}f(\frac{\cdot-t_{n}}{\lambda_{n}},\frac{\cdot}{\lambda_{n}})$ and}
$u_n$ be the solution in the sense of Duhamel of
\begin{align*}
\partial_t^2u_n-\Delta u_n&=f_n\quad\text{in }B^c, \\
\partial_ru_n&=0\quad \text{for }r=1,\\
\vec{u}_{n\restriction t= -t_n}&=\vec{v}_{n\restriction t= -t_n}.
\end{align*}

Then, as $n$ goes to infinity 
\begin{equation}
\sup_{t\in\mathbb R}\Vert u_{n}{(t)}-v_{n}{(t)}\Vert_{\mathcal H (B^c)}\longrightarrow0,\label{eq:lin_dil_H}
\end{equation}
and  
\begin{equation}
\Vert u_{n}-v_{n}\Vert_{L^{5}L^{10}}\longrightarrow0.\label{eq:lin_dil_L5L10}
\end{equation}
\end{lem}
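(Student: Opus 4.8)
The plan is to exploit the explicit Duhamel formula of Proposition~\ref{prop:linear_group}, together with the fact that a dilating profile is, up to small error, supported away from the obstacle. Write $w_n:=u_n-v_n$; then $w_n$ solves the linear Neumann wave equation on $B^c$ with zero source and initial data at $t=-t_n$ equal to $-v_n(-t_n)$ restricted to $B^c$, \emph{plus} a boundary defect: $v_n$ does not satisfy the Neumann condition at $r=1$. The cleanest bookkeeping is to observe that $v_n$, restricted to $B^c$, satisfies $\partial_t^2 v_n-\Delta v_n=f_n$ in $B^c$ but with $\partial_r v_n(1,t)=g_n(t)\neq 0$. So $w_n$ is the solution of the homogeneous Neumann problem whose only inhomogeneity comes from correcting this boundary term. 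I would first reduce \eqref{eq:lin_dil_H} and \eqref{eq:lin_dil_L5L10} to showing that this boundary-correction contribution tends to $0$ in $L^\infty_t\mathcal H(B^c)\cap L^5L^{10}$, which by Proposition~\ref{prop:Strichartz} follows once we control it in $L^\infty_t\mathcal H(B^c)$ and bound an $L^1_tL^2$ source, or more directly from the explicit formula \eqref{eq:lin_grp_duhamel}.

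The key quantitative input is that $v=S_{\RR^3}(\vec\psi)+$ Duhamel$(f)$ has finite energy and its "trace data" near the light cone lives in $L^2$; after rescaling by $\sigma_{\lambda_n}$ with $\lambda_n\to\infty$, the profile $v_n$ becomes very spread out, so on the fixed compact region $\{1\le r\le 2\}$ where the obstacle interacts, $v_n$ and its derivatives are small in the relevant norms. Concretely, by a density argument I would first assume $\vec\psi\in C_c^\infty$ and $f\in C_c^\infty$; then $v(t,r)=\tfrac1r(\varphi_+(r-t)+\varphi_-(r+t))+$(Duhamel term), and after scaling, the quantities $\zeta_{0,n}'=\partial_r(rv_n(0))$, $\zeta_{1,n}=r\partial_tv_n(0)$, evaluated on $1\le r\le C$, are $O(\lambda_n^{-1})$ pointwise with $L^2$-bounded rescalings — more precisely $\|v_n\|_{L^2(1\le r\le R)}$, $\|\partial v_n\|_{L^2(1\le r\le R)}$ and the contribution of $g_n$ to the explicit boundary integrals all tend to $0$. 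Plugging these into \eqref{eq:lin_grp_duhamel} and the $s\le 1$ formulas \eqref{phirl1}--\eqref{eq:psirl1} (which are the only place the obstacle enters) shows $w_n\to 0$ in $L^\infty_t\mathcal H(B^c)$; the general case follows by approximating $\vec\psi$ in $\mathcal H(\RR^3)$ and $f$ in $L^1_tL^2$ and using the uniform bounds from Propositions~\ref{prop:Strichartz} and~\ref{prop:Strichartz_free} together with the conservation of energy. For \eqref{eq:lin_dil_L5L10} one then feeds the smallness of the boundary defect (as an $L^1_tL^2$ forcing or via the explicit formula) into the Strichartz estimate of Proposition~\ref{prop:Strichartz}.

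An alternative, perhaps more robust, route that I would keep in mind: reflect. Extend $v_n$ from $B^c$ to all of $\RR^3$ by $\mathcal{P}$-type truncation is not smooth enough; instead, note that the difference between the Neumann and free evolutions is governed entirely by how the finite-speed-of-propagation light cone from the region $\{r\le 2\}$ evolves, and for a dilating profile the energy initially present in $\{1\le r\le 2\}$ tends to zero (this is exactly \eqref{eq:L^2_comp}-type control after scaling, since $\|\sigma_{\lambda_n}h\|_{L^2(1\le r\le 2)}\to 0$ for fixed $h\in L^2$ because the mass of $\sigma_{\lambda_n}h$ near the origin vanishes). Then both $u_n$ and $v_n$, outside the domain of influence of $\{r\le 2\}$, literally coincide by uniqueness and finite speed, and inside that domain of influence both have vanishing energy; combining gives the uniform convergence \eqref{eq:lin_dil_H}, and Strichartz on the small-energy piece gives \eqref{eq:lin_dil_L5L10}.

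The main obstacle I anticipate is handling the forcing term $f_n$ carefully: unlike the initial data, $f_n$ is translated in time by $t_n$, so one must check that the part of $f_n$ that "hits" the obstacle region $\{r\le 2\}$ over all time still has small $L^1_tL^2$ norm. This is true because $\|f_n\|_{L^1_tL^2(1\le r\le 2)}=\|\tfrac1{\lambda_n^{5/2}}f(\tfrac{\cdot-t_n}{\lambda_n},\tfrac\cdot{\lambda_n})\|_{L^1_tL^2(1\le r\le 2)}$, and changing variables shows the spatial integral is over $\{1/\lambda_n\le \rho\le 2/\lambda_n\}$, a shrinking ball near the origin, so dominated convergence gives that this tends to $0$ uniformly in $t_n$ — but making this rigorous, together with the simultaneous control of the boundary defect generated by $v_n$ and of the initial-data mismatch, is where the real work lies. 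Once these three small quantities are in hand, both conclusions follow from Propositions~\ref{prop:Strichartz} and~\ref{prop:linear_group}.
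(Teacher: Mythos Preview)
Your primary plan is in the right spirit but differs from the paper's argument, and your alternative route has a genuine gap.

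\textbf{What the paper does.} After the same density reduction to smooth compactly supported $\vec\psi$ and $f$, the paper does \emph{not} try to control $w_n=u_n-v_n$ through the explicit Duhamel formula or by casting the boundary defect as an $L^1_tL^2$ source. Instead it computes directly
\[
\frac{d}{dt}\,\frac12\|\vec w_n(t)\|_{\mathcal H(B^c)}^2
=4\pi\,\partial_r v_n(t,1)\,\partial_t w_n(t,1),
\]
using only that $u_n$ satisfies the Neumann condition. Everything then reduces to \emph{pointwise} bounds on $\partial_r v_n(t,1)$, $\partial_t v_n(t,1)$ and $\partial_t u_n(t,1)$. For compactly supported data these follow from the explicit formulas of Proposition~\ref{prop:linear_group} and strong Huygens, giving estimates of the type $|\partial v_n(t,1)|\lesssim \lambda_n^{-3/2}\indic_{|t|\le C\lambda_n}$ and $|\partial_t u_n(t,1)|\lesssim \lambda_n^{-3/2}+e^{-|t|}\lambda_n^{-1/2}$; integrating in $t$ gives \eqref{eq:lin_dil_H}. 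The general $t_n$ is handled by first comparing $v_n$ to the Neumann solution $w_n$ with the \emph{same} source $f_n$ but initial data imposed at $t=0$ (this is the $t_n=0$ case), and then noting that $w_n-u_n$ solves the homogeneous Neumann equation, so energy conservation transports the smallness at $t=-t_n$ to all times. Finally \eqref{eq:lin_dil_L5L10} is obtained from \eqref{eq:lin_dil_H} by interpolation (Sobolev gives $L^\infty L^6$, H\"older plus Strichartz closes to $L^5L^{10}$), not by feeding a small forcing into Proposition~\ref{prop:Strichartz}.

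\textbf{The gap in your alternative.} The finite-speed-of-propagation argument, as you sketch it, does not give a uniform-in-$t$ conclusion. The domain of influence of $\{r\le 2\}$ is $\{r\le 2+|t|\}$, which eventually covers any fixed compact set; inside this growing region the energy of $u_n$ (or of $v_n$) need not be small --- energy flows in from the exterior. So ``inside that domain both have vanishing energy'' is false, and the argument only controls $w_n$ on a time-dependent exterior cone, not in $L^\infty_t\mathcal H(B^c)$. Your primary approach via the explicit formulas can be made to work, but you would still need the pointwise boundary-trace estimates above (in particular the bound on $\partial_t u_n(t,1)$, which is not obvious a priori and uses \eqref{phirl1}--\eqref{eq:psirl1}); the energy-derivative identity is what makes those estimates sufficient.
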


\begin{proof}
Observe that, by interpolation, it suffices to obtain (\ref{eq:lin_dil_H}): indeed, if   (\ref{eq:lin_dil_H}) holds, by Sobolev embedding we have $\Vert u_{n}-v_{n}\Vert_{L^\infty L^6}\longrightarrow0$, and then (\ref{eq:lin_dil_L5L10}) follows by Hölder inequality, Minkowski inequality, Strichartz estimates for both flows (Propositions \ref{prop:Strichartz} and and \ref{prop:Strichartz_free}) and conservation of energy. Moreover, arguing by density, we can assume that $\vec{\psi}$  and $f$
are smooth and compactly supported. We will argue in three steps:
\begin{enumerate}
\item $t_n = 0$ $\forall n$ and $f=0$ ,
\item $t_n = 0$ $\forall n$ and $\vec \psi = \vec 0$,
\item general case.
\end{enumerate}
\subsection*{Step 1: $t_n = 0$ and $f=0$}
We have, using the equations satisfied by $u_n$ and $v_n$
\begin{multline} \label{eq:lin_dil_dt}
\frac{d}{dt}\Big( \frac{1}{2} \int_{B^c} |\nabla(u_n-v_n)|^2 + \frac{1}{2} \int_{B^c} |\partial_t(u_n-v_n)|^2 \Big) \\
= - \int_{\partial B^c} \partial_r(u_n-v_n) \partial_t (u_n-v_n) =  \int_{\partial B^c} \partial_r v_n \partial_t (u_n-v_n) \\
= 4 \pi \partial_r v_n(t,1) \partial_t (u_n-v_n)(t,1).
\end{multline}
We now claim that, for large $n$
\begin{align}
\label{eq:lin_dil_c1}
|\partial_t v_n (t,1)| + |\partial_r v_n (t,1)|&\lesssim \frac{1}{\lambda_n^{\frac{3}{2}}} \indic_{[-C\lambda_n, C\lambda_n] }, \\ 
\label{eq:lin_dil_c2}
|\partial_t u_n (t,1)| &\lesssim \frac{1}{\lambda_n^{\frac{3}{2}}}+ \frac{e^{-|t|}}{\lambda_n^{\frac{1}{2}}},\end{align}
where the constant $C>0$ and the implicit constants depend on $\vec{\varphi}$. Observe that integrating (\ref{eq:lin_dil_dt}), (\ref{eq:lin_dil_c1}) and (\ref{eq:lin_dil_c2}) give (\ref{eq:lin_dil_H}).

Let us first show (\ref{eq:lin_dil_c1}). Observe that
$$
v_n(t,x)  = \frac{1}{\lambda_n ^ \frac{1}{2}} v(\frac{t}{\lambda_n}, \frac{x}{\lambda_n}),
$$
where $v := S_{\mathbb{R}^{3}}(t)\vec{\psi}$. As $\vec{\psi} \in C^\infty _c$, $\vec v$ is bounded
in any Sobolev space $H^\sigma(\mathbb R ^3)\times H^{\sigma - 1}(\mathbb R ^3)$ for $\sigma \geq 1$. As a consequence,
\begin{equation} \label{eq:lin_dil_c11}
|\partial_t v_n (t,1)| + |\partial_r v_n (t,1)| \lesssim \frac{1}{\lambda_n^{\frac{3}{2}}}.
\end{equation}
Furthermore, by the strong Huygens principle, $v$ is supported in $\big\{ |t| \leq |x| + C \big\}$, and thus 
\begin{equation} \label{eq:lin_dil_c12}
v_n(t,1) = 0 \text{ for }|t| \geq 1 + C \lambda_n.
\end{equation}
Together with (\ref{eq:lin_dil_c11}),  (\ref{eq:lin_dil_c12}) gives  (\ref{eq:lin_dil_c1}).

We now show (\ref{eq:lin_dil_c2}). By Proposition \ref{prop:linear_group}, we have for $t\geq0$
\begin{equation} \label{eq:lin_dil_c21}
\partial_t u_n (t,1) = -\varphi'_{+,n} (1-t) + \varphi'_{-,n}(1+t),
\end{equation}
where, denoting $\vec \psi = (\psi_0, \psi_1)$
\begin{equation}\label{eq:lin_dil_c22}
\varphi'_{-,n}(s) = \frac{1}{2} \Big(  \frac{1}{\lambda_n^{\frac{3}{2}}} \psi_0 ' (\frac{s}{\lambda_n}) +   \frac{1}{\lambda_n^{\frac{3}{2}}} \psi_1 ' (\frac{s}{\lambda_n}) \Big)
\end{equation}
and
\begin{multline} \label{eq:lin_dil_c23}
\varphi'_{+,n}(s) =  - \frac{1}{2} \Big(  \frac{1}{\lambda_n^{\frac{3}{2}}} \psi_0 ' (\frac{2-s}{\lambda_n}) +   \frac{1}{\lambda_n^{\frac{3}{2}}} \psi_1 ' (\frac{2-s}{\lambda_n}) \Big) \\
+ \frac{1}{\lambda_n^{\frac{3}{2}}} \int_1 ^ {2-s} e^{s+\sigma-2} \Big(\psi_0 ' (\frac{\sigma}{\lambda_n}) +  \psi_1 ' (\frac{\sigma}{\lambda_n}) \Big) \, d \sigma 
+ e^{s-1} \frac{1}{\lambda_n^{\frac{1}{2}}} \psi_0(\frac{1}{\lambda_n}).
\end{multline}
This last identity (\ref{eq:lin_dil_c23}) with  (\ref{eq:lin_dil_c21}) and (\ref{eq:lin_dil_c22}) gives (\ref{eq:lin_dil_c2}) for $t\geq 0$. The argument for $t\leq 0$ is similar and Step 1 follows.

\subsection*{Step 2: $t_n = 0$ and ${\vec \psi} = {\vec 0}$ }
As in the first step, we have
\begin{multline} \label{eq:lin_dilinh_dt}
\frac{d}{dt}\Big( \frac{1}{2} \int_{B^c} |\nabla(u_n-v_n)|^2 + \frac{1}{2} \int_{B^c} |\partial_t(u_n-v_n)|^2 \Big) \\
= 4 \pi \partial_r v_n(t,1) \partial_t (u_n-v_n)(t,1).
\end{multline}
Let us show that
\begin{align}
|\partial_t v_n (t,1)| + |\partial_r v_n (t,1)|&\lesssim \frac{1}{\lambda_n^{\frac{3}{2}}} \indic_{[-C\lambda_n, C\lambda_n] }, \label{eq:lin_dilinh_c1}\\
|\partial_t u_n (t,1)| &\lesssim \frac{1}{\lambda_n^{\frac{7}{2}}} t^2,\label{eq:lin_dilinh_c2}
\end{align}
which, together with (\ref{eq:lin_dilinh_dt}), implies (\ref{eq:lin_dil_H}).

We first show (\ref{eq:lin_dilinh_c1}). We have
$$
v_n(t,x)  = \frac{1}{\lambda_n ^ \frac{1}{2}} v(\frac{t}{\lambda_n}, \frac{x}{\lambda_n}),
$$
where $v := S_{\mathbb{R}^{3}}(t)\vec{\psi}$. As $\partial_t v$ and $\partial_r v$ are bounded,
\begin{equation} \label{eq:lin_dilinh_c11}
|\partial_t v_n (t,1)| + |\partial_r v_n (t,1)| \lesssim \frac{1}{\lambda_n^{\frac{3}{2}}}.
\end{equation}
In addition, as we assumed $f$ to be compactly supported in time and space, 
\begin{equation*}
v_n(t,1) = 0 \text{ for }|t| \geq 1 + C \lambda_n,
\end{equation*}
which, with (\ref{eq:lin_dilinh_c11}), gives (\ref{eq:lin_dilinh_c1}).

In order to prove  (\ref{eq:lin_dilinh_c2}), we will need
\begin{claim*}
Let $f\in C^0(\mathbb R \times B^c)$ be radial and bounded:
$$
\forall(t,x)\in\mathbb R \times B^c, \, |f(t,x)|\leq M,
$$
and $w$ be the solution of
\begin{align*}
\partial_t^2w-\Delta w&=f\quad\text{in }B^c, \\
\partial_rw&=0\quad \text{for }r=1,\\
\vec{w}_{\restriction t=0}&= \vec 0.
\end{align*}
Then we have
$$
\forall(t,x)\in\mathbb R \times B^c, \, |w(t,x)| \leq \frac{1}{2} Mt^2.
$$
\end{claim*}
\noindent To obtain (\ref{eq:lin_dilinh_c2}) from the claim, we apply it
to $w := \partial_t u_n$, observing that as $u_n$ is a regular solution,
$\partial_t u_n$ is in $C^0 (\mathbb R, D(-\Delta_N))${, and thus satisfies Neumann boundary conditions}. Let us now
prove the claim to achieve the proof of Step 2. Let
$$
z(t,r) := \frac{1}{2} Mt^2 - w(t,r).
$$
By the formulas
of Proposition \ref{prop:linear_group}, we see that
if $u_1$ is positive for $t\geq 0$, then so is $S_N(0,u_1)(t,r)$. Thus,
by the Duhamel formula, as $(\partial_t ^2 -\Delta) z \geq 0$, we have
$z\geq 0$ for $t\geq 0$, from which we obtain 
$w\leq \frac{1}{2} Mt^2$ for $t\geq 0$.
Considering  $\tilde z(,r) := \frac{1}{2} Mt^2 + w(t,r)$, we obtain
as well $-w\leq \frac{1}{2} Mt^2$ for $t\geq 0$. The negative times
are obtained in a similar fashion.
\subsection*{Step 3: general case}
By the two first steps, we obtain the case $t_n = 0$. Now, let $w_n$ be solution of
the Neumann problem {with initial condition at $t=0$}
\begin{align*}
\partial_t^2w_n-\Delta w_n&=f_n\quad\text{in }B^c, \\
\partial_rw_n&=0\quad \text{for }r=1,\\
\vec{w}_{n\restriction t= {0}}&=\vec{v}_{n\restriction t= {0}}.
\end{align*}
By the case $t_n = 0$, we have, as $n\longrightarrow \infty$
\begin{equation}
\sup_{t\in\mathbb R}\Vert w_{n}-v_{n}\Vert_{\mathcal H (B^c)}\longrightarrow0, \label{eq:lin_s3_1}
\end{equation}
and in particular, as by definition $u_n(-t_n) = v_n(-t_n)$
\begin{equation}
\Vert w_{n}(-t_n)-u_{n}(-t_n)\Vert_{\mathcal H (B^c)}\longrightarrow0.\label{eq:lin_s3_2}
\end{equation}
From (\ref{eq:lin_s3_2}), as $w_{n}-u_n$ is solution
of the homogeneous linear wave equation with Neumann boundary conditions in $B^c$,
it follows from conservation of energy that
\begin{equation}
\sup_{t\in\mathbb R}\Vert w_{n}-u_{n}\Vert_{\mathcal H (B^c)} = \Vert w_{n}(-t_n)-u_{n}(-t_n)\Vert_{\mathcal H (B^c)}\longrightarrow0.\label{eq:lin_s3_3}
\end{equation}
The result (\ref{eq:lin_dil_H}) follows from (\ref{eq:lin_s3_1}) and (\ref{eq:lin_s3_3}).
\end{proof}

The following lemma will play a key role in the comparison between the $\mathbb R^3$ and Neumann dynamics in the
nonlinear profile decomposition introduced in section \ref{section:critical} (see in particular (\ref{eq:comp_U_V})).

\begin{lem}[Comparison of nonlinear evolutions
for dilating profiles] \ednote{\color{pgreen}$U$ et $V$ ont été échangés pour la coherence globale des notations, ce que je n'ai pas indiqu\'e par des couleurs}\. \newline
\label{lem:nl_dil}Let $V\in L^{5}(\mathbb R, L^{10}(\mathbb{R}^3))$
be a solution of the critical defocusing nonlinear wave equation in $\mathbb{R}^{3}$, (i.e.\,(\ref{NLW}) with $\Omega = \mathbb R ^3$ and $\iota = 1$), $(\lambda_{n})_{n}$
a\ednote{Thomas: puisqu'on va extraire des suites \`a tour de bras, il vaut mieux ne pas expliciter l'ensemble de d\'efinition de l'indice $n$ dans les suites. J'ai essay\'e de remplacer syst\'ematiquement $n\geq 1$ o\`u $n\geq 0$ par $n$} sequence of positive real numbers such that $\lambda_{n}\longrightarrow+\infty$,
and $(t_{n})_{n}\in\mathbb{R}^{\mathbb{N}}$. We denote\ednote{David: Tu as pris $\tau_n = 0$; dans section \ref{section:critical}, (\ref{eq:comp_U_V}) on
a besoin de $\tau_n = t_n$, en pratique avoir une translation en temps ici ne change rien donc j'en ai mis une quelconque. Thomas: finalement j'ai pris $t_n=0$ et $\tau_n$ quelconque, puis j'ai appel\'e $\tau_n$: $t_n$. Je n'ai pas marqu\'e tous les changements de ce type par des couleurs!}
$$ V_n(t,x):=\frac{1}{\lambda_n^{1/2}}V\left( \frac{t-t_n}{\lambda_n},\frac{x}{\lambda_n} \right)=\mathscr{S}_{\RR^3}(t)\sigma_{\lambda_n}\left( \vec{V}\left( \frac{-t_n}{\lambda_n} \right) \right)$$
and let $U_n$ be the solution of the nonlinear Neumann problem\ednote{Thomas:J'ai rajout\'e une belle accolade, ainsi que dans les \'equations suivantes} 
\begin{equation*}
\left\{
\begin{aligned}
\partial_t^2U_n-\Delta U_n + U_n^5&=0\quad\text{in }B^c, \\
\partial_rU_n&=0\quad \text{for }r=1,\\
\vec{U}_{n\restriction t= 0}&=\vec{V}_{n\restriction t= 0}.
\end{aligned}\right.
\end{equation*}
Then
$$
\limsup_{n\in\mathbb N} \Vert U_n \Vert_{L^5 L^{10}} < \infty,
$$
and, as $n\longrightarrow \infty$
$$
\sup_{t\in\mathbb R}\big\Vert \vec{U}_{n}(t)-\vec{V}_{n}(t)\big\Vert_{\mathcal H (B^c)}+\Vert U_n - V_n \Vert_{L^5 L^{10}}\longrightarrow 0.
$$
\end{lem}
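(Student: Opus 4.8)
The plan is to run a perturbation argument (Proposition~\ref{lem:perturb}) with the $\RR^3$ solution $V_n$ playing the role of an approximate solution to the Neumann problem for $U_n$, treating the obstacle as the source of error. First I would record that, by scaling invariance of the equation and of the $L^5L^{10}$ norm, $\Vert V_n\Vert_{L^5L^{10}}=\Vert V\Vert_{L^5L^{10}}=:M<\infty$ uniformly in $n$, and that $\vec V_n$ is uniformly bounded in $\HHH(\RR^3)$ by conservation of energy. Restricting $V_n$ to $B^c$, it satisfies
\begin{equation*}
\partial_t^2 V_n-\Delta_N V_n+V_n^5 = e_n \quad\text{in }B^c,\qquad \vec{V}_{n\restriction t=0}=\vec V_{n\restriction t=0},
\end{equation*}
but with the wrong boundary condition, so the ``error'' is not literally a forcing term in $L^1L^2$; the point is that the \emph{difference of the flows} accounts for the boundary mismatch. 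The cleanest way around this is: let $\tilde U_n := S_N(t)\vec{V}_{n\restriction t=0} + \int_0^t S_N(t-\tau)(0,-V_n^5(\tau))\,d\tau$ be the Neumann solution of the \emph{linear} equation with the exact nonlinear forcing of $V_n$; equivalently, write $U_n - V_n = (U_n - \tilde U_n) + (\tilde U_n - V_n)$.

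The heart of the matter is to estimate $\tilde U_n - V_n$. Both solve $\partial_t^2 w -\Delta w = -V_n^5$ in their respective domains with the \emph{same} Cauchy data at $t=0$; on $\RR^3$ this is $V_n$ (restricted), on $B^c$ with Neumann condition this is $\tilde U_n$. Thus $\tilde U_n - \mathbf{restriction\ of\ } V_n$ is exactly of the type controlled by Lemma~\ref{lem:dil_lin}: indeed, decompose $V_n^5$ into its value and note $\lambda_n^{5/2}V^5((t-t_n)/\lambda_n,\cdot/\lambda_n)$ is precisely the rescaled forcing $f_n$ appearing there with $f=V^5\in L^1(\RR,L^2(\RR^3))$ (since $V\in L^5L^{10}$ implies $V^5\in L^1L^2$), and $\vec\psi = \vec V(0)$. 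Hence Lemma~\ref{lem:dil_lin} gives
\begin{equation*}
\sup_{t\in\RR}\Vert \tilde U_n - V_n\Vert_{\HHH(B^c)} + \Vert \tilde U_n - V_n\Vert_{L^5L^{10}} \longrightarrow 0.
\end{equation*}
In particular $\Vert S_N(\cdot)\big(\vec{V}_{n\restriction 0}\big) - V_n\Vert$-type quantities, and more importantly $\Vert S_N(\cdot)\big((U_n)_{\restriction 0}-(\tilde U_n)_{\restriction 0}\big)\Vert_{L^5L^{10}}=0$ since $U_n$ and $\tilde U_n$ have the same initial data. So $V_n$ (as a function on $B^c$, solving the Neumann equation up to the error $e_n$ with $\Vert e_n\Vert_{L^1L^2}\to 0$, where $e_n$ now genuinely is an $L^1L^2$ forcing measuring how far $V_n$ is from the Neumann flow construction) is an approximate solution with $\Vert V_n\Vert_{L^5L^{10}}\le M$ and matching data.

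With that in hand I would apply Proposition~\ref{lem:perturb} to $M$ fixed, $u:=V_n$ on $B^c$ (approximate Neumann solution with forcing $e_n$, $\Vert e_n\Vert_{L^1L^2}\le\eps$ for $n$ large), $\tilde u := U_n$, same initial data so the $S_N$-difference term vanishes: it yields, for $n$ large, $U_n\in L^5L^{10}$ with $\Vert U_n - V_n\Vert_{L^5L^{10}}\lesssim \eps$, hence $\limsup_n\Vert U_n\Vert_{L^5L^{10}}\le M + o(1)<\infty$, and then the Strichartz estimate of Proposition~\ref{prop:Strichartz} applied to $U_n-V_n$ upgrades the $L^5L^{10}$ smallness to $\sup_t\Vert \vec U_n(t)-\vec V_n(t)\Vert_{\HHH(B^c)}\to 0$ as well (controlling $\Vert U_n^5 - V_n^5\Vert_{L^1L^2}$ by Hölder using the uniform $L^5L^{10}$ bounds and the smallness of $U_n-V_n$). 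The main obstacle is bookkeeping the boundary-condition mismatch correctly: the subtle point is that $V_n$ restricted to $B^c$ does \emph{not} satisfy Neumann conditions, so one cannot directly call it a solution of the perturbed Neumann problem; the resolution, as above, is to route everything through the linear Neumann flow with the nonlinear forcing of $V_n$ and invoke Lemma~\ref{lem:dil_lin} to show this auxiliary object is $L^5L^{10}$- and energy-close to $V_n$, thereby converting the geometric discrepancy into an $L^1L^2$ error term to which the standard perturbation lemma applies. The remaining ingredients --- scaling invariance of norms, $V\in L^5L^{10}\Rightarrow V^5\in L^1L^2$, and the Strichartz upgrade from $L^5L^{10}$ to $C_t\HHH$ --- are routine.
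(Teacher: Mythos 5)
Your proof is correct and is essentially the paper's own: your auxiliary object $\tilde U_n$ is exactly the paper's $Z_n$ (the linear Neumann evolution of the data $\vec V_{n\restriction t=0}$ with forcing $-V_n^5$), Lemma \ref{lem:dil_lin} is invoked in the same way, with $f=V^5\in L^1L^2$, to get $\Vert Z_n-V_n\Vert_{L^5L^{10}}+\sup_t\Vert \vec Z_n(t)-\vec V_n(t)\Vert_{\mathcal H(B^c)}\to 0$, and the remaining comparison of $U_n$ with $Z_n$ is the same Strichartz/Gr\"onwall/bootstrap step, which the paper carries out by hand rather than quoting Proposition \ref{lem:perturb}. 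The only points to watch if you do quote Proposition \ref{lem:perturb} are that its statement places the a priori $L^5L^{10}$ bound on the exact solution whereas here the bound sits on the approximate one $Z_n$ (with error $e_n=Z_n^5-V_n^5$, small in $L^1L^2$ by H\"older and Lemma \ref{lem:dil_lin}), so you need its symmetric variant (proved identically), and that both the perturbation step and the final energy upgrade must be run on $U_n-Z_n$, which satisfies the Neumann condition, rather than literally on $U_n-V_n$ --- which is precisely the routing you describe.
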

{
\begin{remark}
The conclusion of the proposition implies
$$  \lim_{n\to\infty}\left\|\vec{\mathscr{S}}_{\RR^3}(t)\sigma_{\lambda_n}\left( \vec{V}\left( \frac{-t_n}{\lambda_n} \right) \right)-\vec{\mathscr{S}}_{N}(t)\sigma_{\lambda_n}\left( \vec{V}\left( \frac{-t_n}{\lambda_n} \right) \right)\right\|_{\HHH (B^c)}=0.$$
\end{remark}
}
\begin{proof}
{Observe that, by energy estimates, it suffices to show $\Vert U_n - V_n \Vert_{L^5 L^{10}}\longrightarrow 0$.} Let $Z_n$ be the solution of the nonlinear Neumann problem
\begin{equation*}
\left\{
\begin{aligned}
\partial_t^2Z_n-\Delta Z_n + V_n^5&=0\quad\text{in }B^c, \\
\partial_rZ_n&=0\quad \text{for }r=1,\\
\vec{Z}_{n\restriction t= 0}&=\vec{U}_{n\restriction t= 0}.\end{aligned}\right.
\end{equation*}
By Lemma\ednote{Thomas: j'ai mis des majuscules partout pour les appels \`a des lemmes, propositions, th\'eor\`emes num\'erot\'es...} \ref{lem:dil_lin} applied to $Z_n(\cdot+t_n)$ and $V_n(\cdot+t_n)$, we get
\begin{equation} \label{eq:dilnl_ZU}
\Vert Z_n - V_n\Vert_{L^5 L^{10}} \longrightarrow 0.
\end{equation}
Let $T>0$\ednote{Thomas: ce n'est plus la peine de d\'efinir ces fonctions, puisque l'ancien $t_n$ est nul. J'ai enlev\'e les tildes partout (sans le noter par des couleurs)}
and observe that
\begin{equation*}
\left\{
\begin{aligned}
\partial_t^2(Z_n-U_n) + \Delta (Z_n-U_n) &= U_n ^5 - V_n ^5 \quad\text{in }B^c, \\
\partial_r(Z_n-U_n)&=0\quad \text{for }r=1,\\
\vec{Z}_{n} - \vec{U}_{n\restriction t= 0}&=\vec{0},
\end{aligned}\right.
\end{equation*}
and therefore, we have, by the global Strichartz estimates for the Neumann flow (Proposition \ref{prop:Strichartz}), together with Hölder and Minkowski inequalities, with an
implicit constant which is independent of $T>0$
\begin{multline}\label{eq:dilnl_dec1} 
\Vert Z_n - U_n \Vert_{L^5(-T,T) L^{10}} \lesssim \Vert U_n^5 - V_n^5 \Vert_{L^1(-T,T) L^{2}} \\
\lesssim \int_{-T} ^T \Big[ \Vert V_n(t) \Vert^4_{L^{10}} \Vert U_n(t) - V_n(t) \Vert_{L^{10}} + \Vert U_n(t) - V_n(t) \Vert^5_{L^{10}} \Big] \; dt \\
\lesssim \int_{-T}  ^T   \Big[ \Vert V_n(t) \Vert_{L^{10}}^4 \Vert Z_n(t) - U_n(t) \Vert_{L^{10}} + \Vert Z_n(t) - U_n(t) \Vert^5_{L^{10}} \Big] \,dt\;+\epsilon_n(T), 
\end{multline}
where we decomposed $U_n(t) - V_n(t) = U_n(t) - Z_n(t) + Z_n(t) - V_n(t)$ in the last line, and 
\begin{equation*}
\epsilon_n(T) =  \int_{-T}^T \Vert V_n(t) - Z_n(t)\Vert_{L^{10}}^5 + \Vert V_n(t)\Vert_{L^{10}}^4\Vert V_n(t) - Z_n(t)\Vert_{L^{10}} \; dt.
\end{equation*}
By Hölder inequality and (\ref{eq:dilnl_ZU})
\begin{equation}\label{eq:dilnl_epsp}
\epsilon'_n := \sup_{T>0} \epsilon_n(T) \leq \Vert {V}_n - {Z}_n\Vert_{L^5(\mathbb R, L^{10})}^5 + \Vert V \Vert_{L^5(\mathbb R, L^{10}(\mathbb R ^3))}^4\Vert {V}_n - {Z}_n\Vert_{L^5(\mathbb R, L^{10})} \longrightarrow 0.
\end{equation}
By (\ref{eq:dilnl_dec1}), we have, with an implicit constant independent of $T$
\begin{multline} \label{eq:dilnl_decprefin}
\Vert Z_n - U_n \Vert_{L^5(-T,T) L^{10}} \lesssim \int_{-T} ^T  \|V_n(t)\|_{L^{10}}^4 \Vert Z_n(t) - U_n(t) \Vert_{L^{10}} \; dt \\ +   \epsilon'_n  + \Vert Z_n - U_n \Vert_{L^5(-T,T) L^{10}} ^5.
\end{multline}
Now, 
$\|V_n\|_{L^{10}}^{4}\in L^{\frac{5}{4}}(\RR)$ and $\left\|\|V_n\|_{L^{10}}^4\right\|_{L^{\frac{5}{4}}(\RR)}=\|V\|_{L^{5}L^{10}}^4.  $
Thus we get,  by (\ref{eq:dilnl_decprefin}), using the Gronwall-type lemma of \cite[Lemma 8.1]{MR2838120}, for all $T>0$, with $C>0$ independent of $T>0$:
\begin{equation} \label{eq:dilnl_decfin}
\Vert Z_n - U_n \Vert_{L^5(-T,T) L^{10}} \leq C \big(  \epsilon'_n  + \Vert Z_n - U_n \Vert_{L^5(-T,T) L^{10}} ^5 \big).
\end{equation} 
Let $\epsilon>0$ be small enough so that 
$
2 C\epsilon^5 \leq \frac{1}{2} \epsilon,
$
and $n$ large enough so that 
$
\epsilon'_n \leq \epsilon ^5.
$
From (\ref{eq:dilnl_decfin}), it follows that if $T$ is such that $\Vert Z_n - U_n \Vert_{L^5(-T,T) L^{10}} \leq \epsilon$, we have
$$
\Vert Z_n - U_n \Vert_{L^5(-T,T) L^{10}} \leq \frac{1}{2} \epsilon.
$$
We can therefore send $T$ to infinity to obtain:
$$
 \Vert Z_n - U_n \Vert_{L^5(\mathbb R, L^{10})}  \longrightarrow 0,
$$ 
and the lemma
 follows using $(\ref{eq:dilnl_ZU})$. 
\end{proof}

\section{Linear profile decomposition}
\label{section:profiles}
{ We recall that by convention, if $(u_0,u_1)\in \HHH(\RR^3)$, $S_N(t)(u_0,u_1)$ (respectively $\mathscr{S}_N(t)(u_0,u_1)$) denotes the flow of the linear (respectively nonlinear) wave equation with Neumann boundary condition applied to the \emph{restriction} of $(u_0,u_1)$ to $B^c$.}
The aim of this section is to show
\begin{prop}[Linear profile decomposition]\label{prop:linprof} Let $(\vec{\phi}_{n})_{n\geq1}$
be a boun\-ded sequence in $\mathcal{H}(B^c)$. Then, up to a subsequence,
there exists sequences of real parameters $(t_{j,n})_{j,n\geq1}$,
$(\lambda_{j,n})_{j,n\geq1}$ and a sequence 
$(\vec{\psi}^{j})_{j\geq1}$
of elements of $\mathcal H (\mathbb{R}^{3})$
such that 
\begin{equation}
j\neq k\implies\lim_{n\rightarrow+\infty}\frac{|t_{j,n}-t_{k,n}|}{\lambda_{j,n}}+\big|\log\frac{\lambda_{j,n}}{\lambda_{k,n}}\big|=+\infty,\label{eq:pl_orth}
\end{equation}
there exists a partition $(J_{\text{comp}},J_{\text{diff}})$ of $\mathbb{N}$
such that
\begin{equation}
j\in J_{\text{comp}}\implies\forall n,\ \lambda_{j,n}=1,\label{eq:pl_lqa0}
\end{equation}
\begin{equation}
j\in J_{\text{diff}}\implies\lambda_{j,n}\underset{n\rightarrow\infty}{\longrightarrow}+\infty,\label{eq:pl_la_inf}
\end{equation}
moreover
\begin{equation}
\forall j,\ t_{j,n}{/\lambda_{j,n}}\longrightarrow\pm\infty\text{ or }\forall n,\;t_{j,n}=0,\label{eq:t_fin}
\end{equation}
and, for all $J\geq1$, 
\begin{equation}
\vec{\phi}_{n}=\sum_{j=1}^{J}\vec{S}_{N}(-t_{j,n})\sigma_{\lambda_{j,n}}\vec{\psi}^{j}+\vec{w}_{n}^{J},\label{eq:pl_dec}
\end{equation}
where the remainder enjoys the decay\ednote{\color{pgreen} Voir l'EdNote \ref{ed:Stri_interp} dans le lemme \ref{lem:nl_dil} -- la meme chose s'applique ici}
\begin{equation}
\hspace{1em}\lim_{J\rightarrow+\infty}\limsup_{n\rightarrow+\infty}\Vert S_{N}(\cdot)\vec{w}_{n}^{J}\Vert_{L^{5}L^{10}}=0.\label{eq:pl_dec_rem}
\end{equation}
In addition, this decomposition verifies the Pythagorean expansion,
\begin{equation}
\forall J,\hspace{1em}\Vert\vec{\phi}_{n}\Vert_{\mathcal{H}(B^c)}^{2}=\sum_{\substack{j\in J_{\text{comp}}\\ 1\leq j\leq J}}\Vert \vec{\psi}^{j}\Vert_{\mathcal{H}(B^c)}^{2}\ +\sum_{
\substack{j\in J_{\text{diff}}\\ 1\leq j\leq J}}\Vert\vec{\psi}^{j}\Vert_{\mathcal{H}(\mathbb{R}^{3})}^{2}\ +\Vert\vec{w}_{n}^{J}\Vert_{\mathcal{H}(B^c)}^{2}\ +o_{n}(1),\label{eq:pl_pyth_1}
\end{equation}
as well as the $L^{6}$ version of it:
\begin{equation}
\forall J,\hspace{1em}\Vert \phi_{n}\Vert_{L^6}^{6}=\sum_{j=1}^{J}\Vert S_{N}(-t_{j,n})\sigma_{\lambda_{j,n}}\vec{\psi}^{j}\Vert_{L^{6}}^{6}\ +\Vert w_{n}^{J}\Vert_{L^{6}}^{6}\ +o_{n}(1).\label{eq:pl_pyth_L6}
\end{equation}
\end{prop}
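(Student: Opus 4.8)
The plan is to run the iterative Bahouri--G\'erard extraction scheme, the new features being (i) that some profiles ``see'' the obstacle --- those at scale $\lambda_{j,n}=1$, for which the relevant flow is the genuine Neumann flow $S_N$ and the profile lives in $\HHH(B^c)$ --- while the dilating ones do not, so that for them $S_N$ must be replaced by $S_{\RR^3}$ via Lemma \ref{lem:dil_lin} and the profile lives in $\HHH(\RR^3)$; and (ii) that the geometry forces every profile scale to stay bounded below, which is why the clean dichotomy \eqref{eq:pl_lqa0}--\eqref{eq:pl_la_inf} holds.

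\emph{Step 1: extraction of a single profile.} First I would record a refined Strichartz inequality in $\RR^3$: there are $C>0$ and $\theta\in(0,1)$ such that, for radial $\vec f\in\HHH(\RR^3)$,
$$\|S_{\RR^3}(\cdot)\vec f\|_{L^5L^{10}(\RR\times\RR^3)}\le C\,\|\vec f\|_{\HHH(\RR^3)}^{1-\theta}\ \Big(\sup_{t\in\RR,\ r>0} r^{1/2}\,\big|S_{\RR^3}(t)\vec f(r)\big|\Big)^{\theta},$$
the supremum being scale-invariant by radial Sobolev embedding. Using Proposition \ref{prop:linear_scattering} --- or, more robustly, the explicit formula of Proposition \ref{prop:linear_group} written in terms of $\varphi_+',\varphi_-'\in L^2(\RR)$ --- one transfers it to the Neumann flow. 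Thus, if $\vec\phi_n$ is bounded in $\HHH(B^c)$ with $\delta:=\limsup_n\|S_N(\cdot)\vec\phi_n\|_{L^5L^{10}}>0$, then up to a subsequence there are times $t_{1,n}$ and radii $r_{1,n}$ at which $r_{1,n}^{1/2}|S_N(t_{1,n})\vec\phi_n(r_{1,n})|$ stays $\gtrsim\delta'>0$; setting $\lambda_{1,n}:=r_{1,n}$ and extracting a weak limit, one obtains a nonzero profile $\vec\psi^1$. When $(\lambda_{1,n})$ stays bounded (up to a further subsequence $\lambda_{1,n}\to\lambda_\infty\in(0,\infty)$, which we absorb into the profile so that $\lambda_{1,n}\equiv 1$), $\vec\psi^1$ is the $\HHH(B^c)$-weak limit of $\sigma_{\lambda_{1,n}}^{-1}\vec S_N(t_{1,n})\vec\phi_n$; when $\lambda_{1,n}\to+\infty$, Lemma \ref{lem:dil_lin} lets us replace the Neumann evolution by a free one at scale $1$, and $\vec\psi^1$ is the corresponding $\HHH(\RR^3)$-weak limit. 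In either case one gets $\|\vec\psi^1\|_{\HHH}^{2}\gtrsim(\delta/\|\vec\phi_n\|_{\HHH})^{C}\,\|\vec\phi_n\|_{\HHH}^{2}$, so the extracted profile carries a definite fraction of the Strichartz norm.

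\emph{Step 2: scales are bounded below.} The key structural point is that $\lambda_{1,n}\not\to 0$. Extend $\vec\phi_n$ to $\HHH(\RR^3)$ by $\vec{\mathcal P}$: the first component is then constant and the second vanishes on $B$, so their gradients vanish there. A profile concentrating at scale $\lambda_n\to 0$ would carry its $\dot H^1\times L^2$-mass in $\{|x|\lesssim\lambda_n\}\subset B$ for large $n$, where $\vec{\mathcal P}\vec\phi_n$ carries none; hence the corresponding weak limit would be $0$, a contradiction. Therefore, after extraction, each scale is either $\equiv 1$ (put $j\in J_{\text{comp}}$) or $\to+\infty$ (put $j\in J_{\text{diff}}$), which is \eqref{eq:pl_lqa0}--\eqref{eq:pl_la_inf}. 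The time normalization \eqref{eq:t_fin} is obtained as usual: if $t_{1,n}/\lambda_{1,n}$ stays bounded we absorb the (rescaled) flow into the profile and set $t_{1,n}=0$, otherwise a subsequence has $t_{1,n}/\lambda_{1,n}\to\pm\infty$.

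\emph{Step 3: iteration, orthogonality, remainder, $L^6$ expansion.} Set $\vec w_n^1:=\vec\phi_n-\vec S_N(-t_{1,n})\sigma_{\lambda_{1,n}}\vec\psi^1$. By construction $\sigma_{\lambda_{1,n}}^{-1}\vec S_N(t_{1,n})\vec w_n^1\rightharpoonup 0$ in the relevant space (using Lemma \ref{lem:dil_lin} in the dilating case); together with the fact that $S_N$ is an isometry on $\HHH(B^c)$ and that $\|\sigma_{\lambda_n}\vec\psi\|_{\HHH(B^c)}\to\|\vec\psi\|_{\HHH(\RR^3)}$ when $\lambda_n\to+\infty$ (the missing piece corresponds, after unscaling, to $\vec\psi$ restricted to $B(0,1/\lambda_n)$, whose $\HHH$-mass vanishes), this yields the one-profile Pythagorean expansion \eqref{eq:pl_pyth_1}. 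One then re-runs Step 1 on $\vec w_n^1$ to extract $\vec\psi^2$, and so on. The asymptotic orthogonality \eqref{eq:pl_orth} of the parameters $(t_{j,n},\lambda_{j,n})$ is proved by the standard contradiction argument: non-orthogonal parameters would make a later profile a nonzero weak limit of a sequence converging weakly to $0$. Iterating \eqref{eq:pl_pyth_1} gives $\sum_j\|\vec\psi^j\|_{\HHH}^2\le\limsup_n\|\vec\phi_n\|_{\HHH}^2<\infty$, so the lower bound of Step 1 forces $\delta_J:=\limsup_n\|S_N(\cdot)\vec w_n^{J-1}\|_{L^5L^{10}}\to 0$, which is \eqref{eq:pl_dec_rem}. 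Finally, \eqref{eq:pl_pyth_L6} follows from \eqref{eq:pl_orth}, which makes the terms $S_N(-t_{j,n})\sigma_{\lambda_{j,n}}\vec\psi^j$ asymptotically disjoint in space-time location and scale (hence in $L^6$ mass), together with control of the cross terms against the remainder via \eqref{eq:pl_dec_rem} and interpolation.

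\emph{Main obstacle.} The delicate point is the bookkeeping forced by the coexistence of the two types of profiles: proving the refined Strichartz inequality for $S_N$ (Proposition \ref{prop:linear_scattering} only compares the two flows as $t\to+\infty$, so it is cleanest to argue from the $\varphi_\pm'$-representation of Proposition \ref{prop:linear_group}), taking the weak limits of the dilating profiles in $\HHH(\RR^3)$ rather than $\HHH(B^c)$ and checking that the Pythagorean expansion genuinely picks up $\|\vec\psi^j\|_{\HHH(\RR^3)}$ there, and above all the geometric argument of Step 2 ruling out $\lambda_n\to 0$, which is what endows the decomposition with the clean form \eqref{eq:pl_lqa0}--\eqref{eq:pl_la_inf}.
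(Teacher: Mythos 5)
Your overall architecture (iterative extraction, the dichotomy $\lambda_{j,n}\equiv 1$ versus $\lambda_{j,n}\to\infty$, Lemma \ref{lem:dil_lin} to trade $S_N$ for $S_{\RR^3}$ on dilating profiles, the time normalization, and the Pythagorean bookkeeping) matches the paper, and your Step 2 ruling out scales going to $0$ is exactly the paper's mechanism: after extension by $\mathcal{P}$ the data are constant in the first component and zero in the second inside $B$, so a rescaling with $\lambda_n\to 0$ produces a sequence that is constant of size $\lambda_n^{1/2}|u_n(1)|\lesssim\lambda_n^{1/2}$ on balls of radius $1/\lambda_n$, hence has zero weak limit (this is the content of Lemma \ref{lem:el_conc_0}).

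The genuine gap is the engine of your Step 1: the ``refined Strichartz inequality'' $\Vert S(\cdot)\vec f\Vert_{L^5L^{10}}\lesssim\Vert\vec f\Vert_{\HHH}^{1-\theta}\big(\sup_{t,r}r^{1/2}|S(t)\vec f(r)|\big)^{\theta}$ is asserted, not proved, and everything downstream depends on it: it is what makes the extracted profile carry a definite fraction of the $L^5L^{10}$ norm, and hence what yields the remainder decay \eqref{eq:pl_dec_rem} from summability of the profile energies. Even in $\RR^3$ this radial $|x|^{1/2}L^\infty_{t,x}$ refinement is not a standard estimate (the classical refinements behind Bahouri--G\'erard are Besov-type), and the naive interpolation fails: pointwise in time one gets $\Vert u(t)\Vert_{L^{10}}^{10}\le A^8\int_0^\infty |u(t,r)|^2 r^{-2}dr$, and the weight $r^{-2}dr$ is \emph{not} controlled by Hardy or the energy, so a real argument (exploiting time integration) is needed. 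Worse, the transfer to the Neumann flow is not supplied by the tools you invoke: Proposition \ref{prop:linear_scattering} only compares $S_N$ with a free wave asymptotically as $t\to+\infty$ for a \emph{fixed} datum, with no uniformity over a bounded sequence and no control at intermediate times, and a direct proof from the explicit formula of Proposition \ref{prop:linear_group} (with its $e^{s-1}\zeta_0(1)$ boundary terms) is genuinely new work. The paper never needs such an estimate: it defines the concentration functional $\eta$ directly as the supremum of norms of weak limits of $\sigma_{\lambda_n}^{-1}\vec{\mathcal P}\vec S_N(t_n)\vec\alpha_n$ (with $\lambda_n\to\infty$ or $\lambda_n\equiv1$), extracts profiles nearly attaining $\eta$, shows $\eta(\vec w_n^J)\to0$ by summability, and converts this into \eqref{eq:pl_dec_rem} by first proving $L^\infty L^6$ smallness of $S_N(\cdot)\vec w_n^J$ via G\'erard's \emph{elliptic} profile decomposition applied to the extensions (Lemma \ref{lem:el_conc_0}), then interpolating with the Strichartz bounds of Proposition \ref{prop:Strichartz}. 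Unless you prove your refined inequality for $S_N$, you should switch to this weak-limit/elliptic-decomposition route.

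Two smaller points. First, in the $L^6$ expansion the cross terms between the profiles and $w_n^J$ at \emph{fixed} $J$ do not vanish because of \eqref{eq:pl_dec_rem} (which only kicks in as $J\to\infty$); they vanish because the suitably rescaled and translated remainder converges weakly to $0$, and one uses the pointwise inequality $\big||z+w|^6-|z|^6-|w|^6\big|\lesssim|z||w|(|z|^4+|w|^4)$ plus Rellich on compact sets (for $t_{j,n}=0$) and the $L^6$ decay of the linear flow (for $|t_{j,n}|/\lambda_{j,n}\to\infty$), as in the paper. Second, your phrase that a small-scale profile ``carries its mass in $\{|x|\lesssim\lambda_n\}$'' should be replaced by the precise statement above (the rescaled extension is constant, of size $O(\lambda_n^{1/2})$, on arbitrarily large balls), which is what actually forces the weak limit to vanish.
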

Recall from \eqref{def:prolong} the definition of the extension operator $\mathcal{P}$.
Proposition \ref{prop:linprof} will be a consequence of:
\begin{lem}
\label{lem:el_conc_0}Let $(f_{n})_{n\geq1}$ be a bounded sequence in
$\dot{H}^1(B^c)$ such that for all sequence of real numbers $(\lambda_{n})_{n\geq1}$
verifying  
$$\lim_n \lambda_n=+\infty \quad \text{or} \quad\forall n,\; \lambda_n=1,$$
we have, as $n$ goes to infinity
\[
\lambda_{n}^{\frac{1}{2}}\mathcal{P}(f_{n})(\lambda_{n}\cdot)\rightharpoonup0\text{ in }\dot{H}^{1}(\mathbb{R}^{3}).
\]
Then, up to a subsequence, as $n$ goes to infinity 
\[
\Vert f_{n}\Vert_{L^{6}(B^c)}\longrightarrow0.
\]
\end{lem}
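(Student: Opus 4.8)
The plan is to move the problem to the whole space through the extension operator $\mathcal P$ and then appeal to the Euclidean concentration theory. Set $g_n:=\mathcal P(f_n)$. By Definition \ref{def:prolong}, $(g_n)$ is a bounded sequence of radial functions in $\dot{H}^1(\RR^3)$ whose gradient vanishes identically on $B$ (since $g_n$ is constant there), and since $g_n=f_n$ on $B^c$ one has $\|f_n\|_{L^6(B^c)}\le\|g_n\|_{L^6(\RR^3)}$. Hence it is enough to show that, up to a subsequence, $\|g_n\|_{L^6(\RR^3)}\to 0$.

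For this I would use the (radial) profile decomposition of Bahouri--G\'erard \cite{BahouriGerard99} in $\dot{H}^1(\RR^3)$ (or, equivalently, a refined Sobolev inequality together with a standard extraction argument). Up to a subsequence it yields scales $(\mu_{j,n})_{j,n}\subset(0,+\infty)$ and radial profiles $\psi^j\in\dot{H}^1(\RR^3)$, with $|\log(\mu_{j,n}/\mu_{k,n})|\to\infty$ for $j\neq k$, such that
\[
g_n=\sum_{j=1}^{J}\sigma_{\mu_{j,n}}\psi^j+r_n^J,\qquad \lim_{J\to\infty}\limsup_{n\to\infty}\|r_n^J\|_{L^6(\RR^3)}=0,
\]
and $\mu_{j,n}^{1/2}g_n(\mu_{j,n}\,\cdot\,)\rightharpoonup\psi^j$ in $\dot{H}^1(\RR^3)$. (In this radial framework one may take all spatial cores equal to $0$; the competing possibility of cores escaping to infinity relative to the scale is ruled out, for radial functions, by boundedness of $(g_n)$ in $\dot{H}^1$, which I would check by a short symmetrisation estimate.) The lemma then reduces to showing that every profile vanishes, $\psi^j=0$: this gives $\|g_n\|_{L^6(\RR^3)}=\|r_n^J\|_{L^6(\RR^3)}$ for every $J$, whence $\|g_n\|_{L^6(\RR^3)}\to 0$.

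To annihilate a given profile, pass to a further subsequence so that $\mu_{j,n}\to\ell\in[0,+\infty]$, and distinguish three cases. If $\ell=+\infty$, the hypothesis applied to $\lambda_n=\mu_{j,n}$ gives $\mu_{j,n}^{1/2}\mathcal P(f_n)(\mu_{j,n}\,\cdot\,)\rightharpoonup 0$, so $\psi^j=0$. If $\ell=0$, then $\nabla\bigl(\mu_{j,n}^{1/2}g_n(\mu_{j,n}\,\cdot\,)\bigr)=\mu_{j,n}^{3/2}(\nabla g_n)(\mu_{j,n}\,\cdot\,)$ is supported in $\{|x|\ge 1/\mu_{j,n}\}$, which leaves every compact set; testing against $\nabla\varphi$ with $\varphi\in C_c^\infty(\RR^3)$ gives $0$ for $n$ large, whence $\psi^j=0$. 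If $\ell\in(0,+\infty)$, the case $\lambda_n\equiv 1$ of the hypothesis gives $g_n\rightharpoonup 0$ in $\dot{H}^1(\RR^3)$, and for $\varphi\in C_c^\infty(\RR^3)$ a change of variables gives
\[
\Bigl\langle\nabla\bigl(\mu_{j,n}^{1/2}g_n(\mu_{j,n}\,\cdot\,)\bigr),\nabla\varphi\Bigr\rangle_{L^2}=\mu_{j,n}^{-1/2}\Bigl\langle\nabla g_n,\nabla\bigl(\varphi(\,\cdot\,/\mu_{j,n})\bigr)\Bigr\rangle_{L^2};
\]
since $\varphi(\,\cdot\,/\mu_{j,n})\to\varphi(\,\cdot\,/\ell)$ in $\dot{H}^1(\RR^3)$ while $(g_n)$ is bounded with $g_n\rightharpoonup 0$, the right-hand side tends to $0$, and by density of $C_c^\infty(\RR^3)$ in $\dot{H}^1(\RR^3)$ and boundedness of $(g_n)$ we conclude $\mu_{j,n}^{1/2}g_n(\mu_{j,n}\,\cdot\,)\rightharpoonup 0$, so $\psi^j=0$.

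The heart of the matter --- and the step I expect to be the main obstacle --- is precisely the last two cases. The hypothesis only controls the two extreme families of scales, $\lambda_n\to+\infty$ and $\lambda_n\equiv 1$, and the real content of the lemma is that no concentration can persist at a vanishing scale ($\ell=0$) or at an intermediate scale ($\ell\in(0,+\infty)$). The vanishing-scale case is where the specific structure of $\mathcal P(f_n)$ enters --- its gradient is supported away from the origin, so nothing can concentrate there --- while the intermediate-scale case rests on the strong continuity of the dilation group on $\dot{H}^1(\RR^3)$, which lets one compare a scale bounded away from $0$ and $\infty$ with the scale $1$ already handled by hypothesis. A secondary technical point is the reduction, in this radial exterior-domain setting, to profiles with spatial core at the origin.
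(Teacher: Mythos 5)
Your proposal is correct and follows essentially the same route as the paper: extend by $\mathcal{P}$, apply the radial elliptic profile decomposition of G\'erard in $\dot H^1(\RR^3)$, and kill each profile by invoking the hypothesis for scales tending to $+\infty$ or identically $1$, reserving a direct argument based on the structure of the extension for scales tending to $0$. The only (cosmetic) difference is in that last case: the paper uses that $\mathcal{P}(f_n)$ equals the constant $f_n(1)$ on $\{r\le 1/\lambda_{j,n}\}$ with $\lambda_{j,n}^{1/2}f_n(1)\to 0$ and concludes by uniqueness of the distributional limit, while you use that the gradient of the rescaled extension is supported in $\{|x|\ge 1/\mu_{j,n}\}$ and exits every compact set; both are valid.
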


\begin{proof}
As $(\mathcal{P}(f_{n}))_{n\geq1}$ is a bounded sequence in $\dot{H}_{\text{rad}}^{1}(\mathbb{R}^{3})$,
we may apply the elliptic profile decomposition of \cite{Gerard98}, and up to a subsequence 
\[
\mathcal{P}(f_{n})=\sum_{j=1}^{J}\frac{1}{\lambda_{j,n}^{1/2}}\varphi_{j}\left(\frac{\cdot}{\lambda_{j,n}}\right)+w_{n}^{J},
\]
with 
\[
\lim_{J\rightarrow+\infty}\limsup_{n\rightarrow+\infty}\Vert w_{n}^{J}\Vert_{L^{6}}=0.
\]
Remark that 
\[
\varphi_{j}=\weaklim_{n\rightarrow\infty}\lambda_{j,n}^{1/2}\mathcal{P}(f_{n})(\lambda_{j,n}\cdot)\text{ in }\dot{H}^{1}(\mathbb{R}^{3}).
\]
Thus, for all $j$ such that $\liminf_{n}\lambda_{j,n}>0$, we have $\varphi_{j}=0$
by hypothesis. {Indeed in this case,  extracting subsequences, we can assume that $\lambda_{j,n}$ has a limit $\lambda_{\infty}\in (0,\infty)\cup\{+\infty\}$. If this limit is finite, we may furthermore assume, rescaling $\varphi_j$ if necessary, that $\lambda_{j,n}=1$ for all $n$.}

On the other hand, if $j$ is such that $\lambda_{j,n}\underset{n\rightarrow\infty}{\longrightarrow}0$,
observe that
\[
\lambda_{j,n}^{1/2}\mathcal{P}(f_{n})(\lambda_{j,n}\cdot)=\lambda_{j,n}^{1/2}f_{n}(1)\text{ on }\Big\{ r\leq\frac{1}{\lambda_{j,n}}\Big\}.
\]
By Lemma \ref{lem:u(1)HN},
\[
|f_{n}(1)|\lesssim\Vert f_{n}\Vert_{\dot{H}^1(B^c)}
\]
{which is bounded independently of $n$,} and we deduce that $\lambda_{j,n}^{1/2}\mathcal{P}(f_{n})(\lambda_{n}\cdot)$ goes
to zero as $n$ goes to infinity, uniformly on every compact of $\RR^3$, and thus in the sense
of distributions as well. By the uniqueness of the limit, we conclude
that $\varphi_{j}=0$. Therefore $\varphi_{j}=0$ for all $j$ and
the lemma follows.
\end{proof}
Before showing Proposition \ref{prop:linprof}, let us observe that
\begin{lem}
\label{lem:orth_implies_decay}Let $\big(\vec{R}_n\big)_{n}$ be a sequence in $\HHH(\RR^3)$. For $j=1,2$, let $(\lambda_{j,n})_{n}\in(\mathbb{R}_{+}^{*})^{\mathbb{N}},\ (t_{j,n})_{n}\in\mathbb{R}^{\mathbb{N}}$
be such that\ednote{\color{pgreen}Changé un peu les notations pour être coherent avec celles de la decomposition en profils sans forcement l'indiquer par des couleurs}
\begin{equation}
 \label{alternative_lambda}
\lim_{n\to\infty}\lambda_{j,n}=+\infty \quad\text{or} \quad \forall n,\; \lambda_{j,n}=1.
 \end{equation} 
Then
\begin{enumerate}
\item If 
\[
\exists M,\forall n,\ |t_{1,n}-t_{2,n}|+\big|\log\frac{\lambda_{1,n}}{\lambda_{2,n}}\big|\leq M,
\]
then, up to a subsequence, weakly in $\mathcal{H}(\RR^3)$
\[
\vec{R}_{n}\rightharpoonup0\implies\sigma_{\lambda_{2,n}}^{-1}\vec{\mathcal{P}}(\vec{S}_{N}(t_{1,n}-t_{2,n})\sigma_{\lambda_{1,n}}\vec{R}_{n})\rightharpoonup0.
\]
\item If
\[
\frac{|t_{1,n}-t_{2,n}|}{\lambda_{1,n}}+\big|\log\frac{\lambda_{1,n}}{\lambda_{2,n}}\big|\longrightarrow+\infty,
\]
then, for all $\vec{\psi}\in\mathcal{H}(\RR^3)$, up to a subsequence, weakly in $\mathcal{H}(\mathbb R ^ 3)$:
\[
\sigma_{\lambda_{2,n}}^{-1}\vec{\mathcal{P}}(\vec{S}_{N}(t_{1,n}-t_{2,n})\sigma_{\lambda_{1,n}}\vec{\psi})\rightharpoonup0.
\]
\end{enumerate}
\end{lem}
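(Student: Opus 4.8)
The plan is to reduce both items to a statement about weak limits of rescaled linear Neumann evolutions and to exploit the explicit formula of Proposition~\ref{prop:linear_group} together with the asymptotic comparison between the Neumann flow and the $\mathbb R^3$ flow (Proposition~\ref{prop:linear_scattering} and Lemma~\ref{lem:dil_lin}). The key observation is that testing against a fixed element $\vec h \in \mathcal H(\mathbb R^3)$ and using the (formal) self-adjointness of $\sigma_{\lambda}$ and of the flows, one has
\[
\big\langle \sigma_{\lambda_{2,n}}^{-1}\vec{\mathcal P}\big(\vec S_N(t_{1,n}-t_{2,n})\sigma_{\lambda_{1,n}}\vec R_n\big),\ \vec h\big\rangle_{\mathcal H(\mathbb R^3)}
= \big\langle \vec R_n,\ \sigma_{\lambda_{1,n}}^{-1}\vec S_N(-(t_{1,n}-t_{2,n}))\mathcal Q_{\lambda_{2,n}}\vec h\big\rangle,
\]
for a suitable adjoint $\mathcal Q$ of the extension $\vec{\mathcal P}$; so in case~(1) it suffices to show that the test function on the right converges \emph{strongly} in $\mathcal H$ (for then weak convergence $\vec R_n\rightharpoonup 0$ kills the pairing), while in case~(2), applied with $\vec R_n=\vec\psi$ fixed, it suffices to show that $\sigma_{\lambda_{1,n}}^{-1}\vec S_N(-(t_{1,n}-t_{2,n}))\mathcal Q_{\lambda_{2,n}}\vec h\rightharpoonup 0$.

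For case~(1): after extracting a subsequence, $\log(\lambda_{1,n}/\lambda_{2,n})\to c$ and $t_{1,n}-t_{2,n}\to \tau$ are bounded, hence converge; using \eqref{alternative_lambda} this forces either $\lambda_{1,n}=\lambda_{2,n}=1$ for all $n$, or $\lambda_{1,n},\lambda_{2,n}\to\infty$ with bounded ratio. In the first (purely ``compact'') case the operator $\sigma_{\lambda_{2,n}}^{-1}\vec{\mathcal P}\vec S_N(t_{1,n}-t_{2,n})\sigma_{\lambda_{1,n}}$ is, up to the convergent time shift, a fixed bounded operator on $\mathcal H(\mathbb R^3)$ composed with the continuous flow $\vec S_N(\tau)$, so $\vec R_n\rightharpoonup 0$ is preserved by continuity of the adjoint; one just needs $\vec S_N(t_{1,n}-t_{2,n})\to \vec S_N(\tau)$ strongly on compacts, which follows from the strong continuity of the Neumann group (itself a consequence of the explicit formula in Proposition~\ref{prop:linear_group}). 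In the dilating case, Lemma~\ref{lem:dil_lin} says the rescaled Neumann evolution is asymptotically the rescaled $\mathbb R^3$ evolution; substituting $\mathscr S_{\mathbb R^3}$-type free evolution for $S_N$ up to an $o_n(1)$ error in $\mathcal H(B^c)$, the claim reduces to the analogous, and well-known, fact that $\sigma_{\lambda_{2,n}}^{-1}\vec S_{\mathbb R^3}(t_{1,n}-t_{2,n})\sigma_{\lambda_{1,n}}\vec R_n\rightharpoonup 0$ whenever $\vec R_n\rightharpoonup 0$ and the parameters stay bounded, which is immediate from the invariance of $\mathcal H(\mathbb R^3)$ under $\sigma_\lambda$ and under the free flow.

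For case~(2): fix $\vec\psi$ and a test function $\vec h$. If $\lambda_{1,n}\to\infty$ (which, by \eqref{alternative_lambda}, is one of the two possibilities, the other being $\lambda_{1,n}\equiv 1$), Lemma~\ref{lem:dil_lin} again replaces the Neumann evolution of the dilating profile $\sigma_{\lambda_{1,n}}\vec\psi$ by the corresponding $\mathbb R^3$ evolution up to $o_n(1)$ in $\mathcal H$; after this replacement the quantity to analyze is the $\mathbb R^3$ object $\sigma_{\lambda_{2,n}}^{-1}\vec S_{\mathbb R^3}(t_{1,n}-t_{2,n})\sigma_{\lambda_{1,n}}\vec\psi$, and for the free wave flow on $\mathbb R^3$ the orthogonality condition $\frac{|t_{1,n}-t_{2,n}|}{\lambda_{1,n}}+|\log(\lambda_{1,n}/\lambda_{2,n})|\to\infty$ is precisely the classical condition guaranteeing weak convergence to $0$ (Bahouri--G\'erard). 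If instead $\lambda_{1,n}\equiv 1$, then the orthogonality forces $|t_{1,n}-t_{2,n}|\to\infty$ or $\lambda_{2,n}\to\infty$: when $|t_{1,n}-t_{2,n}|\to\infty$ the function $\vec S_N(t_{1,n}-t_{2,n})\vec\psi$ tends weakly to $0$ in $\mathcal H(B^c)$ by Proposition~\ref{prop:linear_scattering} (the local energy escapes to spatial infinity, as \eqref{Hardy0} and the free-wave comparison make precise), and composing with the bounded operators $\sigma_{\lambda_{2,n}}^{-1}\vec{\mathcal P}$ preserves weak convergence to $0$; when $\lambda_{2,n}\to\infty$, the rescaling $\sigma_{\lambda_{2,n}}^{-1}$ spreads any fixed profile to infinity and $\sigma_{\lambda_{2,n}}^{-1}\vec g\rightharpoonup 0$ for every fixed $\vec g\in\mathcal H(\mathbb R^3)$, which again gives the claim.

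The main obstacle I anticipate is the bookkeeping in case~(2) when $\lambda_{1,n}\equiv 1$ and $\lambda_{2,n}\to\infty$ simultaneously while $t_{1,n}-t_{2,n}$ may stay bounded: here one cannot simply invoke Lemma~\ref{lem:dil_lin} (the source profile is not dilating), and one must genuinely use the structure of the Neumann flow --- via the explicit formula of Proposition~\ref{prop:linear_group} and the vanishing \eqref{Hardy0} --- to see that $\vec S_N(t_{1,n}-t_{2,n})\vec\psi$ stays in a fixed compact-modulo-$\varepsilon$ set while $\sigma_{\lambda_{2,n}}^{-1}$ pushes it off to infinity, yielding weak convergence to $0$. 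Making the ``compact-modulo-$\varepsilon$'' statement precise (density of smooth compactly supported data plus uniform energy bounds plus finite speed of propagation) is the one slightly delicate point; everything else is a routine combination of the already-established Propositions~\ref{prop:linear_group}, \ref{prop:linear_scattering} and Lemma~\ref{lem:dil_lin} with the classical free-wave orthogonality lemma.
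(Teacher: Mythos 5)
Your overall architecture is the same as the paper's: in case (1) you pair against a fixed test function and show that the adjoint expression converges strongly (the paper does exactly this, showing $\sigma_{\lambda_{1,n}}^{-1}\vec{\mathcal{P}}(\vec{S}_{N}(t_{2,n}-t_{1,n})\sigma_{\lambda_{2,n}}\vec{\xi})$ converges strongly for test functions $\vec\xi$, and uses Lemma \ref{lem:dil_lin} in the dilating situation), and in case (2) you split according to $\lambda_{1,n}\to\infty$ (Lemma \ref{lem:dil_lin} plus the classical $\mathbb{R}^3$ orthogonality), $\lambda_{2,n}\to\infty$ with bounded time shifts (rescaling of a fixed limiting profile), or $|t_{1,n}-t_{2,n}|\to\infty$ (Proposition \ref{prop:linear_scattering}), which is the paper's trichotomy.

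There is, however, one genuine gap: with $\lambda_{1,n}\equiv 1$, the orthogonality condition allows $|t_{1,n}-t_{2,n}|\to\infty$ \emph{and} $\lambda_{2,n}\to\infty$ simultaneously, and neither of your two arguments covers this combination. The assertion that ``composing with the bounded operators $\sigma_{\lambda_{2,n}}^{-1}\vec{\mathcal{P}}$ preserves weak convergence to $0$'' is false as a general principle when the operators depend on $n$: an $n$-dependent family of isometries can map a weakly null sequence to a sequence with nonzero weak limit (take $x_n=e_n$ an orthonormal sequence and $A_n$ the isometry exchanging $e_n$ and $e_1$); it is only valid here when $\lambda_{2,n}\equiv 1$, so that the operator is the fixed map $\vec{\mathcal{P}}$. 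Your ``fixed profile'' argument for $\lambda_{2,n}\to\infty$ requires the inner function to converge strongly to a fixed element of $\mathcal{H}(\mathbb{R}^3)$, which only happens when the time shifts stay bounded. The paper closes the mixed case by using Proposition \ref{prop:linear_scattering} to replace $\vec{S}_{N}(t_{1,n}-t_{2,n})\vec{\psi}$, up to $o_n(1)$ in $\mathcal{H}(B^c)$, by a free wave on $\mathbb{R}^3$, and then invoking the classical free-wave orthogonality statement, which treats a diverging time translation together with an arbitrary (possibly unbounded) rescaling at once; this is the missing reduction in your write-up. Incidentally, the case you flag as delicate ($\lambda_{1,n}\equiv 1$, $\lambda_{2,n}\to\infty$, bounded $t_{1,n}-t_{2,n}$) is in fact the easy one and needs neither the explicit formula of Proposition \ref{prop:linear_group} nor \eqref{Hardy0}: extract so that $t_{1,n}-t_{2,n}\to\tau$, use strong continuity of the Neumann group so that $\vec{\mathcal{P}}(\vec{S}_{N}(t_{1,n}-t_{2,n})\vec{\psi})$ converges strongly to a fixed $\vec{\xi}$, and conclude from $\sigma_{\lambda_{2,n}}^{-1}\vec{\xi}\rightharpoonup 0$ by density (note also that $\sigma_{\lambda_{2,n}}^{-1}$ with $\lambda_{2,n}\to\infty$ concentrates at the origin rather than spreading to infinity, though weak vanishing holds either way).
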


\begin{proof} 
Let us show the first point. Up to the extraction of a subsequence, we have
\[
t_{1,n}-t_{2,n}\longrightarrow\tau\in\mathbb{R},
\]
and additionally
\[
\text{either }(\lambda_{1,n},\lambda_{2,n})\longrightarrow(+\infty,+\infty),\text{ or }\forall n,\;(\lambda_{1,n},\lambda_{2,n})=(1,1).
\]
In the first situation, Lemma \ref{lem:dil_lin} 
allows us to replace $S_{N}$ by $S_{\mathbb R ^3}$, for which the
result is known.
In the second situation, we have, for any test function
$\vec{\xi}\in\mathcal{H}(B^c)$, 
\[
\sigma_{\lambda_{1,n}}^{-1}\vec{\mathcal{P}}(\vec{S}_{N}(t_{2,n}-t_{1,n})\sigma_{\lambda_{2,n}}\vec{\xi})\longrightarrow\vec{\mathcal{P}}(\vec{S}_{N}(-\tau)\vec{\xi})
\]
strongly in $\mathcal{H}(B^c)$,
and the first point follows. 

Let us now deal with the second point. We are in one of the three following situations:\ednote{Thomas: ici il manquait un cas ($\lambda_{2,n}\to\infty$, $\lambda_{1,n}=1$, $|t_{1,n}-t_{2,n}|\to \infty$), que j'ai incorpor\'e dans le cas (iii). Argument \`a v\'erifier! {\color{pgreen} Je suis d'accord avec l'argument!}}
\begin{enumerate}
\item[(\emph i)] $\lambda_{1,n} \longrightarrow \infty$, 
\item[(\emph{ii})] $\lambda_{2,n} \longrightarrow \infty$, $\forall n,\; \lambda_{1,n}=1$ and $\exists M>0,\;|t_{1,n}-t_{2,n}|  \leq M$,
\item[(\emph{iii})] $\forall n,\; \lambda_{1,n}=1$ and $|t_{1,n}-t_{2,n}|\longrightarrow \infty$.
\end{enumerate}

In the situation (\emph{i}), we can use again Lemma \ref{lem:dil_lin} 
 to replace $S_{N}$ by $S_{\mathbb R ^3}$, and the result follows.
 
 In the situation (\emph{ii}), up to a subsequence, $\vec{\mathcal{P}}(\vec{S}_{N}(t_{1,n}-t_{2,n})\sigma_{\lambda_{1,n}}\vec{\psi})$ is
 converging strongly in $\mathcal H (\mathbb R ^ 3)$ 
  $$
\vec{\mathcal{P}}(\vec{S}_{N}(t_{1,n}-t_{2,n})\sigma_{\lambda_{1,n}}\vec{\psi}) \longrightarrow \vec \xi.
 $$
 By a density argument, we can assume that $\vec \xi$ is smooth and compactly supported. Then,
 by definition of the scaling $\sigma$
 $$
 \forall r \neq 0, \hspace{0.2cm} \sigma^{-1}_{\lambda_{2,n}} \vec \xi (r) \longrightarrow 0
 $$
 and the result follows.
 
 In the situation (\emph{iii}), we use this time Proposition \ref{prop:linear_scattering} to compare the solution to a solution in $\mathbb R^3$, for which the result is known.\ednote{Thomas: ici on a n'a pas besoin que $\lambda_{2,n}$ reste born\'e si je ne m'abuse, ce qui permet de traiter aussi le cas manquant {\color{pgreen} Je suis d'accord!}}
\end{proof}


We are now in position to prove the main result of this section.
\begin{proof}[Proof of Proposition \ref{prop:linprof}]
We will first construct the profiles and the parameters by induction, so that the expansion \eqref{eq:pl_dec} holds together
with the orthogonality of the parameters \eqref{eq:pl_orth}, \eqref{eq:pl_lqa0}, \eqref{eq:pl_la_inf},
and the Pythagorean expansion \eqref{eq:pl_pyth_1}, \eqref{eq:pl_pyth_L6}.
Then, we will show the decay of the remainder \eqref{eq:pl_dec_rem}.

For $\vec \alpha=(\vec {\alpha}_{n})_{n}$ a bounded sequence in $\mathcal{H}(B^c)$,
let us denote by
$\Lambda(\vec \alpha)$ the set of all  $\vec \psi\in \HHH(\RR^3)$ such that there exist an extraction $\{n_k\}_k$ and sequences $(\lambda_{n_k})_k\in (0,\infty)^{\NN}$ and $(t_{n_k})_k\in \RR^{\NN}$, with 
\begin{gather*}
\lim_{k\to\infty}\lambda_{n_k}=\infty\quad \text{or}\quad\forall k,\;\lambda_{n_k}=1,\\
\vec{\psi}=\weaklim_{k\to\infty}\Big(\sigma_{\lambda_{n_{k}}}^{-1}\vec{\mathcal{P}}\big(\vec{S}_{N}(t_{n_{k}})\vec{\alpha}_{n_{k}}\big)\Big)\text{  in }\HHH(\RR^3).
\end{gather*}
We denote\ednote{\label{Ed:Lambda}Thomas: est-ce bien n\'ecessaire de distinguer ces deux cas? Dans le deuxi\`eme cas on a en fait $\|\vec{\psi}\|_{\HHH(B^c)}=\|\vec{\psi}\|_{\HHH(\RR^3)}$ par la d\'efinition de $\mathcal{P}$. On pourrait juste mettre la norme dans $\HHH(\RR^3)$ et \'ecrire une remarque pour dire que celle-ci est \'egale \`a la norme dans $\HHH(B^c)$ dans le cas o\`u $\lambda_{\infty}=1$. Cela permettrait aussi d'all\'eger la d\'efinition de $\Lambda$ puisqu'on n'aurait plus besoin de faire r\'ef\'erence \`a $\lambda_{\infty}$. Qu'en penses-tu? {\color{pgreen} Je suis d'accord, modifications faites!}}
\begin{equation}
\eta(\vec \alpha):=\sup_{\vec \psi \in\Lambda(\vec \alpha)}
\Vert\vec \psi\Vert_{\mathcal{H}(\mathbb{R}^{3})},
 \label{eq:def_eta}
\end{equation} 
{and observe that,  by definition of $\vec{\mathcal P}$ and $\vec \psi$
\begin{equation} \label{eq:prof_same_norms}
\Vert\vec \psi\Vert_{\mathcal{H}(\mathbb{R}^{3})} = \Vert\vec \psi\Vert_{\mathcal{H}(B^c)}\;\text{ if }\vec \psi\text{ is associated with }\lambda_{n_k}=1.
\end{equation} }

\textbf{Extraction of the first profile.} If $\eta((\vec{\phi}_{n})_{n\geq 1})=0$,
then the decomposition holds. Otherwise, there exists $\vec{\psi}^{1}\in\mathcal{H}(\mathbb{R}^{3})$
and $(\lambda_{1,n})_{n\geq1}\in(\mathbb{R}_{+}^{*})^{\mathbb{N}},\ (t_{1,n})_{n\geq1}\in\mathbb{R}^{\mathbb{N}}$
with $\lambda_{1,n}\rightarrow+\infty$ or $\forall n,\;\lambda_{1,n}=1$,
such that, up to an extraction
\begin{equation}
\vec{\psi}^{1}=\weaklim_{{n\to\infty}}\sigma_{\lambda_{1,n}}^{-1}\vec{\mathcal{P}}\big(\vec{S}_{N}(t_{1,n})\vec{\phi}_{n}\big)\text{ in }\ensuremath{\mathcal{H}(\mathbb{R}^{3})},\label{eq:U01}
\end{equation}
and 
$$
\frac{1}{2}\eta((\vec{u}_{n})_{n\geq 1})\leq \Vert\vec{\psi}^{1}\Vert_{\mathcal{H}(\mathbb{R}^{3})}.
$$
 Let us denote
\begin{equation}
\vec{w}_{n}^{1}:=\vec{\phi}_{n}-\vec{S}_{N}(-t_{1,n})\sigma_{\lambda_{1,n}}\vec{\psi}^{1}.\label{eq:wn1_def}
\end{equation}
Observe that, if $t_{1,n}/\lambda_{1,n}$, has a finite limit $\bar{\tau}_1$ \ednote{\color{pgreen}ici je prefère ne pas utiliser $t$ car la limite a l'homogeneite de $t/ \lambda$ et non $t$}, we can harmlessly assume that $t_{1,n}=0$ for all $n$. { Indeed, if $\lambda_{1,n}=1$ for all $n$, we see by \eqref{eq:U01} that 
$$\vec{\mathcal{P}}\left(\vec{S}_{N}(-\bar{\tau}_1) (\vec{\psi}^{1})\right)= \weaklim_{n\to\infty} \vec{\mathcal{P}}\big(\vec{\phi}_{n}\big).$$
If $\lambda_{1,n}\to+\infty$, we have, by \eqref{eq:U01} and Lemma \ref{lem:dil_lin},
\begin{multline*}
\vec{\psi}^{1}=\weaklim_{n\to\infty}\sigma_{\lambda_{1,n}}^{-1}\big(\vec{S}_{\RR^3}(t_{1,n})\vec{\phi}_{n}\big)=\weaklim_{n\to\infty}\left(\vec{S}_{\RR^3}(t_{1,n}/ \lambda_{1,n})\sigma_{\lambda_{1,n}}^{-1}\vec{\phi}_{n}\right)\\
=\weaklim_{n\to\infty}\left(\vec{S}_{\RR^3}(\bar{\tau}_1)\sigma_{\lambda_{1,n}}^{-1}\vec{\phi}_{n}\right).
\end{multline*}
In both cases, we see that we can assume $t_{1,n}=0$ by modifying the limiting profile $\vec{\psi}^1$.
}

Now, we have, by {the} definition { of }$\vec{w}_{n}^{1}$ \eqref{eq:wn1_def} and the weak convergence \eqref{eq:U01}
\begin{multline}
\Big\langle \vec{S}_{N}(-t_{1,n})\sigma_{\lambda_{1,n}}\vec{\psi}^{1},\vec{w}_{n}^{1}\Big\rangle_{\mathcal{H}(B^c)}=\Big\langle\sigma_{\lambda_{1,n}}\vec{\psi}^{1},\vec{S}_{N}(t_{1,n})\vec{\phi}_{n}-\sigma_{\lambda_{1,n}}\vec{\psi}^{1}\Big\rangle_{\mathcal{H}(B^c)}\label{eq:weak_UO1_wn1}\\
=\Big\langle\sigma_{\lambda_{1,n}}\vec{\psi}^{1},\vec{\mathcal{P}}(\vec{S}_{N}(t_{1,n})\vec{\phi}_{n})-\sigma_{\lambda_{1,n}}\vec{\psi}^{1}\Big\rangle_{\mathcal{H}(\RR^3)}=\Big\langle\vec{\psi}^{1},\sigma_{\lambda_{1,n}}^{-1}\vec{\mathcal{P}}(\vec{S}_{N}(t_{1,n})\vec{\phi}_{n})-\vec{\psi}^{1}\Big\rangle_{\mathcal{H}(\RR^3)}\\
\longrightarrow0\text{ as }n\text{ goes to infinity,}
\end{multline}
and therefore, 
\begin{equation}
\Vert\vec{\phi}_{n}\Vert_{\mathcal{H}(B^c)}^{2}=\Vert \vec{S}_{N}(-t_{1,n})\sigma_{\lambda_{1,n}}\vec{\psi}^{1}\Vert_{\mathcal{H}(B^c)}^{2}+\Vert w_{n}^{1}\Vert_{\mathcal{H}(B^c)}^{2}+o_{n}(1).\label{eq:linprof_orth1}
\end{equation}
But, by conservation of energy
\begin{align}
\Vert \vec{S}_{N}(-t_{1,n})\sigma_{\lambda_{1,n}}\vec{\psi}^{1}\Vert_{\mathcal{H}(B^c)}^{2} & =\Vert\sigma_{\lambda_{1,n}}\vec{\psi}^{1}\Vert_{\mathcal{H}(B^c)}^{2}.\label{eq:linprof_orth1_cons}
\end{align}

Now, remark that, if $\lambda_{1,n}\longrightarrow\infty$, then,
as $n$ goes to infinity, we have
\[
\Vert\sigma_{\lambda_{1,n}}\vec{\psi}^{1}\Vert_{\mathcal{H}(B(0,1))}^{2}\longrightarrow0
\]
and thus, as $\sigma_{\lambda_{1,n}}$ is an isometry on $\mathcal H (\mathbb{R}^{3})$,
\begin{align}
\Vert\sigma_{\lambda_{1,n}}\vec{\psi}^{1}\Vert_{\mathcal H{(B^c)}}^{2} & =\Vert\sigma_{\lambda_{1,n}}\vec{\psi}^{1}\Vert_{\mathcal H(\mathbb{R}^{3})}^{2}+o_{n}(1)\nonumber \\
 & =\Vert\vec{\psi}^{1}\Vert_{\mathcal H(\mathbb{R}^{3})}^{2}+o_{n}(1)\hspace{1em}\text{\text{if }\ensuremath{\lambda_{1,n}\longrightarrow\infty}},\label{eq:linprof_lamb_inf_en_scale}
\end{align}
and thus, combining \eqref{eq:linprof_lamb_inf_en_scale} with \eqref{eq:linprof_orth1}
and \eqref{eq:linprof_orth1_cons}, the decomposition \eqref{eq:pl_dec}
with Pythagorean expansion \eqref{eq:pl_pyth_1} holds at rank $J=1$.

Let us now show the $L^{6}$ Pythagorean expansion \eqref{eq:pl_pyth_L6}.

\noindent\emph{First case: $t_{1,n}=0$}. \ednote{\color{pgreen} Ici je crois qu'avec $t_{1,n}=0$ on peut en fait rassembler les deux cas $\lambda_{1,n}=1$ et $\lambda_{1,n}\longrightarrow\infty$, car on a plus de problème de commutation de $S_N({t_{1,n}})$ avec le scaling. (Moralement on l'a fait commuter lorsqu'on a translate les profils), voir ce qui suit}
Let\ednote{Thomas: dans la suite j'ai remplac\'e ${\psi}^{1}$ par $\vec{\psi}^1$ quand il le fallait}
$$
f_n := 
\bigg| \int  |{\phi_{{}n}}|^6-|\sigma_{\lambda_{1,n}}{{\psi}^{1}}|^6-|{w_{n}^{1}}|^6  \bigg|, $$
and observe that, as for any $z,w \in \mathbb R$
$$
\big| |z+w|^6 - |z|^6 - |w|^6 \big| \lesssim |z||w| \big( |z|^4 + |w|^4\big),
$$
we have, by (\ref{eq:wn1_def})
$$
f_n \lesssim \int \left|\sigma_{\lambda_{1,n}}{{\psi}^{1}}\right|\,|w_{n}^{1}|\;g_n, \hspace{0.2cm}g_n := \left|\sigma_{\lambda_{1,n}}{{\psi}^{1}} \right|^4 + |w_{n}^{1}|^4.
$$
On the other hand, by Sobolev embedding, conservation of energy and scale invariance
\begin{equation*}
\left\Vert  \sigma_{\lambda_{1,n}}{\psi}^{1} \right\Vert_{L^6} \lesssim \left\Vert  \sigma_{\lambda_{1,n}}\vec{\psi}^{1} \right\Vert_{\mathcal H (B^c)}  \\ \leq \left\Vert \sigma_{\lambda_{1,n}}\vec{\psi}^{1} \right\Vert_{\mathcal H (\mathbb R ^3)} = \left\Vert {\vec\psi}^{1}\right\Vert_{\mathcal H (\mathbb R ^3)},
\end{equation*}
Together with (\ref{eq:wn1_def}) and Sobolev embedding, it follows that $\sup_n \Vert g_n \Vert_{L^{3/2}} < \infty$,
and we get, by H\"older inequality\ednote{Thomas: j'ai enlev\'e le dernier terme, qui n\'ecessitait de changer le domaine d'int\'egration}
\begin{multline}\label{eq:PL6fn}
f_n \lesssim \Big( \int_{B^c} |\sigma_{\lambda_{1,n}}{{\psi}^{1}}|^3|w_{n}^{1}|^3  \Big)^\frac{1}{3} \\
{
\leq  \Big( \int_{\mathbb R ^ 3} |\sigma_{\lambda_{1,n}}{{\psi}^{1}}|^3|\tilde{w}_n^1|^3  \Big)^\frac{1}{3} =  \Big( \int_{\mathbb R ^ 3} |{{\psi}^{1}}|^3| \sigma_{\lambda_{1,n}^{-1}}\tilde{w}_n^1|^3  \Big)^\frac{1}{3},}
\end{multline}
{where $\vec {\tilde{w}}_n^1 := \vec{\mathcal P}\vec{\phi}_{n}-\sigma_{\lambda_{1,n}}\vec{\psi}^{1}$ extends the definition
of $\vec {w}_n^1$ to $\mathbb R^3$ in the present case $t_{1,n} = 0$.}
Now, observe that by (\ref{eq:U01}) and (\ref{eq:wn1_def}), $\sigma_{\lambda_{1,n}^{-1}}\vec {\tilde{w}}_n^1 \rightharpoonup 0$
weakly in $\mathcal H (\mathbb R^3)$. By Rellich theorem, for any compact $K\subset \mathbb R ^3$, $\sigma_{\lambda_{1,n}^{-1}}\tilde{w}_n^1$ strongly converges to $0$ in $L^4(K)$. It follows that 
$|\sigma_{\lambda_{1,n}^{-1}}\tilde{w}_n^1|^3$ converges strongly to $0$ in $L^{4/3}(K)$.
By Sobolev embedding,  $|\sigma_{\lambda_{1,n}^{-1}}\tilde{w}_n^1|^3$ is
bounded in $L^2(\mathbb R^3)$, thus has a weakly convergent subsequence in $L^2(\mathbb R^3)$. By uniqueness of the limit {in the sense of distributions},
this weak limit is zero and \eqref{eq:pl_pyth_L6} follows from (\ref{eq:PL6fn}).


\noindent\emph{Second case: $t_{1,n}/\lambda_{1,n}\longrightarrow \pm\infty$.} In this case, we have 
$$
 \Vert {S_{N}}(-t_{1,n}/\lambda_{1,n})\vec{\psi}^{1} \Vert_{L^6} \underset{n\to\infty}{\longrightarrow} 0,
 $$
 which can be proved easily  
 from the corresponding property for the free flow $S_{\mathbb{R}^3}$, and Proposition \ref{prop:linear_scattering}. The $L^6$ Pythagorean expansion  \eqref{eq:pl_pyth_L6} follows immediately.

\textbf{Extraction of the subsequent profiles.} Let us show how to
extract the second profile, the extraction of the $J$'th from the
$J-1$'th being the same for arbitrary $J\geq2$. If $\eta(\vec{w}_{n}^{1})=0$,
then we are done, otherwise, there exists $\vec{\psi}^{2}\in\mathcal{H}(\mathbb{R}^{3})$
and $(\lambda_{2,n})_{n\geq1}\in(\mathbb{R}_{+}^{*})^{\mathbb{N}},\ (t_{2,n})_{n\geq1}\in\mathbb{R}^{\mathbb{N}}$
with $\lambda_{2,n}\rightarrow+\infty$ or $\lambda_{2,n} = 1$,
such that
\begin{equation}
\vec{\psi}^{2}=\weaklim\sigma_{\lambda_{2,n}}^{-1}\vec{\mathcal{P}}\big(\vec{S}_{N}(t_{2,n})\vec{w}_{n}^{1}\big)\text{ in }\ensuremath{\mathcal{H}(\mathbb{R}^{3})},\label{eq:U02}
\end{equation}
and
$$
\frac{1}{2}\eta((\vec{w}_{n}^{1})_{n\geq1})\leq \Vert\vec{\psi}^{2}\Vert_{\mathcal{H}(\mathbb{R}^{3})}.
$$
We take
\begin{align}
\vec{w}_{n}^{2} & :=\vec{w}_{n}^{1}-\vec{S}_{N}(-t_{2,n})\sigma_{\lambda_{2,n}}\vec{\psi}^{2}\label{eq:wn2_def}\\
 & \ =\vec{u}_{n}-\vec{S}_{N}(-t_{2,n})\sigma_{\lambda_{2,n}}\vec{\psi}^{2}-\vec{S}_{N}(-t_{1,n})\sigma_{\lambda_{1,n}}\vec{\psi}^{1}.\nonumber 
\end{align}
Let us first show the orthogonality condition \eqref{eq:pl_orth}.
Denoting
\[
\vec{r}_{n}^{1}:=\sigma_{\lambda_{1,n}}^{-1}\vec{\mathcal{P}}\left(\vec{S}_{N}(t_{1,n})\vec{w}_{n}^{1}\right){=\sigma_{\lambda_{1,n}}^{-1}\vec{\mathcal{P}}\left(\vec{S}_{N}(t_{1,n})\vec{u}_{n}\right)-\sigma_{\lambda_{1,n}^{-1}} \vec{\mathcal{P}}\sigma_{\lambda_{1,n}}\vec{\psi}^1},
\]
we have, by \eqref{eq:U01} and \eqref{eq:wn1_def}
\[
\vec{r}_{n}^{1}\rightharpoonup0\text{ weakly in }\mathcal{H}(\mathbb R ^ 3),
\]
and in addition\ednote{Thomas: il me semble qu'on n'utilise pas \eqref{eq:wn2_def} ici. \color{pgreen} en effet!}, {by} \eqref{eq:U02}
\[
\sigma_{\lambda_{2,n}}^{-1}\vec{\mathcal{P}}(\vec{S}_{N}(t_{2,n}-t_{1,n})\sigma_{\lambda_{1,n}}\vec{r}_{n}^{1})\rightharpoonup\vec{\psi}^{2}\neq0,
\]
therefore, by Lemma \ref{lem:orth_implies_decay}, the orthogonality
condition \eqref{eq:pl_orth} for $(j,k)=(1,2)$ follows.

To show the Pythagorean expansion \eqref{eq:pl_pyth_1}, using the
arguments of the case $J=1$, it suffices to show that the newly arising
mixed term goes to zero, namely that
\[
\langle \vec{S}_{N}(-t_{2,n})\sigma_{\lambda_{2,n}}\vec{\psi}^{2},\vec{S}_{N}(-t_{1,n})\sigma_{\lambda_{1,n}}\vec{\psi}^{1}\rangle_{\mathcal{H}(B^c)}\underset{{n\to\infty}}{\longrightarrow0}.
\]
{Noting that the left-hand side of the previous line equals}
\[
\langle\vec{\psi}^{2},\sigma_{\lambda_{2,n}}^{-1}\vec{\mathcal{P}}(\vec{S}_{N}(t_{2,n}-t_{1,n})\sigma_{\lambda_{1,n}}\vec{\psi}^{1})\rangle_{\mathcal{H}(B^c)},
\]
the result follows by the orthogonality condition together with
Lemma \ref{lem:orth_implies_decay}. 

Finally, (\ref{eq:U02}) and (\ref{eq:wn2_def}) imply by the exact same arguments as in the extraction of the first profile 
that
$$
 \Vert w_n^1 \Vert_{L^6} ^ 6 = \Vert {S_{N}}(-t_{2,n})\sigma_{\lambda_{2,n}}\vec{\psi}^{2}  \Vert_{L^6} ^ 6 + \Vert w_n ^ 2\Vert_{L^6}^6+ o_n(1),
$$
from which the $L^6$ Pythagorean expansion \eqref{eq:pl_pyth_L6} follows using the decomposition proved at the previous rank,
which readed 
$$
\Vert \phi_n \Vert_{L^6}^{6} =  \Big\Vert {S_{N}}(-t_{1,n})\sigma_{\lambda_{1,n}}\vec{\psi}^{1}  \Big\Vert_{L^6}^{6} + \left\Vert w_n ^ 1\right\Vert_{L^6}^{6} + o_n(1).
$$

\textbf{Labeling.} We define $J_{\text{diff}}$ and $J_{\text{comp}}$ as follows: if $\lambda_{j,n}=1$ for all $n$,
then $j\in J_{\text{comp}}$, otherwise, $j\in J_{\text{diff}}$.

\textbf{Decay of the remainder. }In order to obtain \eqref{eq:pl_dec_rem},
it suffices to show that
\begin{equation}
\lim_{J\rightarrow+\infty}\limsup_{n\rightarrow+\infty}\Vert S_{N}(\cdot)\vec{w}_{n}^{J}\Vert_{L^{\infty}L^{6}}=0.\label{eq:dec_rem_infty_6}
\end{equation}
Indeed, if \eqref{eq:dec_rem_infty_6} holds, Strichartz estimates
of Proposition \ref{prop:Strichartz} together with Hölder inequality,
conservation of energy, and the fact that, by the Pythagorean expansion
\eqref{eq:pl_pyth_1}, 
\[
\forall J,\ \limsup_{n\rightarrow+\infty}\Vert\vec{w}_{n}^{J}\Vert_{\mathcal{H}(B^c)}\leq\limsup_{n\rightarrow+\infty}\Vert\vec{\phi}_{n}\Vert_{\mathcal{H}(B^c)},
\]
yields \eqref{eq:pl_dec_rem}.

Let us show \eqref{eq:dec_rem_infty_6}. To this purpose, observe
that, by the Pythagorean expansion \eqref{eq:pl_pyth_1}, 
\[
\forall J,\hspace{1em}\sum_{j=1,\ j\in J_{\text{comp}}}^{J}\Vert\vec{\psi}^{j}\Vert_{\mathcal{H}(B^c)}^{2}\ +\sum_{j=1,\ j\in J_{\text{diff}}}^{J}\Vert\vec{\psi}^{j}\Vert_{\mathcal{H}(\mathbb{R}^{3})}^{2}\leq\limsup_{n\geq1}\Vert\vec{\phi}_{n}\Vert_{\mathcal{H}(B^c)}^2,
\]
and thus both series in $j$ are convergent. Because{, by (\ref{eq:prof_same_norms}),} the profiles
are constructed in such a way that\ednote{Thomas: une question typographique cruciale :-)
Que penserais-tu de ${\psi}^{j}$ plut\^ot que ${\psi}^{j}$? {\color{pgreen} J'ai remplace tous les $\overrightarrow{x^a_b}$ par $\vec x ^ a _b$}}
\[
\eta((\vec{w}_{n}^{j})_{n\geq1})\leq2\begin{cases}
\Vert\vec{\psi}^{j}\Vert_{\mathcal{H}(\mathbb{R}^{3})} & \text{if }j\in J_{\text{diff}},\\
\Vert\vec{\psi}^{j}\Vert_{\mathcal{H}(B^c)} & \text{if }j\in J_{\text{comp}},\ j\neq0
\end{cases}
\]
it follows that
\begin{equation}
\eta\Big((\vec{w}_{n}^{J})_{n\geq1}\Big)\underset{J\rightarrow\infty}{\longrightarrow}0.\label{eq:dec_eta}
\end{equation}
Arguing by contradiction, the $L^{\infty}L^{6}$ decay of $S_{N}(\cdot)\vec{w}_{n}^{J}$
follows by Lemma \ref{lem:el_conc_0}: indeed, if the decay of the
remainder \eqref{eq:dec_rem_infty_6} does not hold, by a diagonal
argument, there exists $\epsilon_{0}>0$ and sequences $J_{k}\longrightarrow+\infty$,
$n_{k}\longrightarrow+\infty$, and $t_{k}$ such that\ednote{Thomas: il faut faire attention, l'affirmation que $\eta(...)$ est nulle n'est pas automatique, cela d\'epend du choix des $J_k$ et $n_k$ {\color{pgreen}Merci, j'etais passé à coté!}}
\[
{\eta\Big((\vec{w}_{n_k}^{J_k})_{k}\Big)=0\quad \text{and}}\quad 
\forall k,\ \Big\Vert S_{N}(t_{k})\vec{w}_{n_{k}}^{J_{k}}\Big\Vert_{L^{6}(B^c)}\geq\epsilon_{0}.
\]
Using Lemma \ref{lem:el_conc_0}, it follows that there exists $\vec{\psi}\in \mathcal H (\mathbb{R}^{3})$,
$\vec{\psi}\neq0$, and a sequence {$(\lambda_{k})_k$ with
$$\lim_{k}\lambda_k=\infty\text{ or }\forall k,\;\lambda_k=1$$}
such that, after extraction
\[
\sigma_{\lambda_{k}}^{-1}\vec{\mathcal{P}}(\vec{S}_{N}(t_{k})\vec{w}_{n_{k}}^{J_{k}})\rightharpoonup\vec{\psi}
\]
weakly in $\mathcal H (\mathbb R ^3)$.
This contradicts the definition \eqref{eq:def_eta} of $\eta$ and ends the
proof of the proposition.
\end{proof}

\section{Construction of a compact flow solution}
\label{section:critical}
Let us define the critical energy $E_{c}$ by 
\begin{equation} \label{eq:Ec}
E_{c}:=\sup\big\{ E>0,\hspace{1em}\forall\vec{u}\in\mathcal{H}(B^c),\ \mathscr{E}(\vec{u})\leq E\implies\mathscr{S}_{N}(\cdot)\vec{u}\in L^{5}L^{10}\big\} ,
\end{equation}
where, for $\vec{u}\in\mathcal{H}(B^c)$, $\mathscr{E}$ is as before the conserved energy
\[
\mathscr{E}(\vec{u}):=\frac{1}{2}\Vert\vec{u}\Vert_{\mathcal{H}(B^c)}^{2}+\frac{1}{6}\Vert u\Vert_{L^{6}}^{6}.
\]
Observe that $E_{c}>0$ by Proposition \ref{prop:perturb_scat}. The
aim of this section is to show
\begin{thm} \label{th:conc-comp}
If $E_{c}<+\infty$, then there exists $\vec{u}_{c}\in\mathcal{H}(B^c)$, $\vec{u}_{c} \neq \vec{0}$,
such that the nonlinear flow $\big\{ \vec{\mathscr{S}}_{N}(t)\vec{u}_{c}, \ t\in \mathbb R \big\}$ has a 
compact closure in $\mathcal{H}(B^c)$.
\end{thm}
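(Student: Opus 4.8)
The plan is to run the concentration--compactness argument of Kenig--Merle in the present setting, so assume $E_c<+\infty$. By the definition \eqref{eq:Ec} of $E_c$ and Proposition \ref{prop:perturb_scat} I first pick a sequence $(\vec\phi_n)_n$ in $\mathcal H(B^c)$ with $\mathscr{E}(\vec\phi_n)\to E_c$ and $\mathscr{S}_N(\cdot)\vec\phi_n\notin L^{5}L^{10}$; since by Remark \ref{R:global} these solutions are global, failure of $L^5L^{10}$ means failure on one of the two half-lines, so after extraction I may assume $\|\mathscr{S}_N(\cdot)\vec\phi_n\|_{L^5([0,\infty),L^{10})}=+\infty$ for every $n$. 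I then apply Proposition \ref{prop:linprof} to $(\vec\phi_n)_n$, obtaining (along a subsequence) profiles $(\vec\psi^j)_j$, parameters $(t_{j,n},\lambda_{j,n})$ with the orthogonality \eqref{eq:pl_orth}, the partition $J_{\text{comp}}\cup J_{\text{diff}}$, the decomposition \eqref{eq:pl_dec}, the remainder bound \eqref{eq:pl_dec_rem}, and the Pythagorean identities \eqref{eq:pl_pyth_1}, \eqref{eq:pl_pyth_L6}. To each $j$ I attach a nonlinear profile $U^j_n$, the solution of \eqref{NLWrad}--\eqref{NBDrad} with data $\vec S_N(-t_{j,n})\sigma_{\lambda_{j,n}}\vec\psi^j$: it is $\vec{\mathscr{S}}_N(\cdot)\vec\psi^j$ when $j\in J_{\text{comp}}$ and $t_{j,n}=0$; it is (up to an $\mathcal H$-error $\to0$) a time-translate of the nonlinear wave operator $U^{\mp}$ of \eqref{eq:pert_scat_Uj} when $j\in J_{\text{comp}}$ and $t_{j,n}\to\pm\infty$; and it is compared, via Lemma \ref{lem:nl_dil}, to the rescaled $\RR^3$ nonlinear profile when $j\in J_{\text{diff}}$.

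Next I show exactly one profile is nonzero. Combining \eqref{eq:pl_pyth_1} and \eqref{eq:pl_pyth_L6} yields a Pythagorean expansion for the nonnegative energy $\mathscr{E}$, so the sum of the energy contributions of the profiles is at most $\liminf_n\mathscr{E}(\vec\phi_n)=E_c$. If two profiles $\vec\psi^1,\vec\psi^2$ were nonzero, each would carry energy strictly between $0$ and $E_c$, hence so would the data of each $U^j_n$ for $n$ large; consequently every nonlinear profile lies in $L^5L^{10}$ with norm bounded uniformly in $n$ --- for compact profiles centered at $0$ by the definition \eqref{eq:Ec} of $E_c$ (using that the $L^5L^{10}$-norm is bounded on $\{\mathscr{E}\le E\}$ for $E<E_c$), for time-translated profiles by \eqref{eq:pert_scat_Uj} together with \eqref{eq:Ec}, and for dilating profiles by Lemma \ref{lem:nl_dil} and the fact, recalled in the introduction, that every solution of the defocusing equation on $\RR^3$ scatters and hence lies in $L^5L^{10}(\RR^3)$. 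Using \eqref{eq:pl_orth} to decouple the $U^j_n$ and \eqref{eq:pl_dec_rem} to absorb the tail and the remainder, the superposition $\sum_{j=1}^JU^j_n+S_N(\cdot)\vec w^J_n$ is, for $J$ then $n$ large, an approximate solution of \eqref{NLWrad} with data $\vec\phi_n$, small $L^1L^2$ error and uniformly bounded $L^5L^{10}$ norm; Proposition \ref{lem:perturb} then puts $\mathscr{S}_N(\cdot)\vec\phi_n$ in $L^5L^{10}$ with bounded norm, contradicting $\|\mathscr{S}_N(\cdot)\vec\phi_n\|_{L^5([0,\infty),L^{10})}=+\infty$. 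The remainder alone cannot make the solution leave $L^5([0,\infty),L^{10})$ either (again by \eqref{eq:pl_dec_rem} and Proposition \ref{lem:perturb}), so exactly one profile $\vec\psi^1$ is nonzero, its energy contribution equals $E_c$, and the $\mathscr{E}$-Pythagorean expansion forces $\|\vec w^1_n\|_{\mathcal H(B^c)}\to0$. Moreover $\vec\psi^1$ is not a dilating profile (Lemma \ref{lem:nl_dil} plus $\RR^3$-scattering would bound $\|U^1_n\|_{L^5L^{10}}$), so $\lambda_{1,n}=1$ and $\vec\psi^1\in\mathcal H(B^c)$ via $\mathcal P$; and $t_{1,n}\to-\infty$ is impossible, since then $\|S_N(\cdot)\vec\phi_n\|_{L^5([0,\infty),L^{10})}\to0$ by Proposition \ref{prop:Strichartz} and time translation, whence $\mathscr{S}_N(\cdot)\vec\phi_n\in L^5([0,\infty),L^{10})$ by small-data theory and Proposition \ref{lem:perturb}. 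Thus either $t_{1,n}=0$ for all $n$, and I set $\vec u_c:=\vec{\mathscr{S}}_N(\cdot)\vec\psi^1$, or $t_{1,n}\to+\infty$, and I set $\vec u_c:=\vec U^{-}$ (the backward nonlinear wave operator of \eqref{eq:pert_scat_Uj} for $\vec\psi^1$); in both cases $\mathscr{E}(\vec u_c)=E_c>0$ so $\vec u_c\neq\vec0$, $u_c$ is global by Remark \ref{R:global}, and $u_c\notin L^5([0,\infty),L^{10})$ (in the second case because otherwise $\|u_c\|_{L^5([t_{1,n},\infty),L^{10})}\to0$ would give $\|\mathscr{S}_N(\cdot)\vec\phi_n\|_{L^5([0,\infty),L^{10})}<\infty$).

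It remains to prove that $K:=\overline{\{\vec u_c(t):t\in\RR\}}$ is compact in $\mathcal H(B^c)$. Let $(t_n)_n\subset\RR$. After extraction, either $t_n\to t_*\in\RR$, and then $\vec u_c(t_n)\to\vec u_c(t_*)$ by continuity of the flow, or $|t_n|\to\infty$; in the latter case, since $u_c\notin L^5([0,\infty),L^{10})$ while $u_c$ has finite Strichartz norm on every compact time interval, one has $\|u_c(\cdot+t_n)\|_{L^5([0,\infty),L^{10})}=\|u_c\|_{L^5([t_n,\infty),L^{10})}=+\infty$ for all $n$. I apply Proposition \ref{prop:linprof} to $(\vec u_c(t_n))_n$ and rerun the argument of the previous paragraph: conservation of energy gives $\mathscr{E}(\vec u_c(t_n))=E_c$, so again there is exactly one nonzero profile $\vec\psi$, of compact type, with remainder tending to $0$ in $\mathcal H(B^c)$, and with time parameter either identically $0$ or tending to $\pm\infty$; the translated possibilities are excluded exactly as above, because the linear flow of $\vec u_c(t_n)$ then has vanishing Strichartz norm on the relevant half-line (resp.\ the associated wave-operator solution has finite Strichartz norm there), contradicting that $u_c(\cdot+t_n)$ fails $L^5([0,\infty),L^{10})$. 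Hence the parameter is $0$, $\vec u_c(t_n)=\vec\psi+o_{\mathcal H(B^c)}(1)$, and $\vec u_c(t_n)\to\vec\psi$ strongly along the subsequence. This gives precompactness of the trajectory.

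The main obstacle is precisely the profile-by-profile analysis underlying the second and third paragraphs: one must control the $\RR^3$ versus Neumann nonlinear dynamics for dilating profiles (this is what Lemma \ref{lem:nl_dil} is for, together with the known scattering on $\RR^3$), handle the nonlinear wave operators \eqref{eq:pert_scat_Uj} for time-translated profiles, and use the orthogonality \eqref{eq:pl_orth} to sum the nonlinear profiles inside the perturbation Proposition \ref{lem:perturb}. The delicate conclusion is that the surviving profile is of compact type ($\lambda_{1,n}=1$) and essentially centered at time $0$ --- this is exactly what produces a solution with precompact flow rather than a scattering one, and it relies on the obstacle being a ball (so that dilating profiles truly see no obstacle) and on the radial assumption.
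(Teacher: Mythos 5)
Your overall scheme (profile decomposition of a minimizing sequence, nonlinear profiles via \eqref{eq:pert_scat_Uj} and Lemma \ref{lem:nl_dil}, decoupling through \eqref{eq:pl_orth} and Proposition \ref{lem:perturb} to force a single profile) is the same as the paper's, but there is a genuine gap in how you normalize the minimizing sequence and, consequently, in the compactness of your critical element. You only arrange $\Vert\mathscr{S}_N(\cdot)\vec\phi_n\Vert_{L^5([0,\infty),L^{10})}=+\infty$. With this one-sided information you can indeed rule out $t_{1,n}\to-\infty$, but not the opposite case: if $t_{1,n}\to+\infty$, the nonlinear profile is (a translate of) the backward wave operator $U^-$, which lies in $L^5(\mathbb{R}_-,L^{10})$, so the perturbation argument only shows that $u_n$ has small $L^5L^{10}$ norm on $(-\infty,0)$ — which does not contradict blow-up of the norm on $[0,\infty)$. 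This is exactly why the paper translates the minimizing sequence in time so that \eqref{symL5L10infty} holds, i.e.\ the $L^5L^{10}$ norms diverge on \emph{both} half-lines; that two-sided normalization is what kills both divergent-time cases and forces $t_{1,n}=0$, and it also guarantees (again by Proposition \ref{lem:perturb}) that the resulting $u_c$ fails to be in $L^5L^{10}$ on each half-line separately, which is what makes the rerun of the argument on $\vec u_c(t_n)$ produce strong convergence.

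Your fallback in the surviving case, $\vec u_c:=\vec U^-$, cannot work: by construction $U^-\in L^5(\mathbb{R}_-,L^{10})$, so $U^-$ scatters as $t\to-\infty$ to the nonzero linear solution $S_N(\cdot)\vec\psi^1$. Along $t_n\to-\infty$ one has $\vec U^-(t_n)-\vec S_N(t_n)\vec\psi^1\to0$ in $\mathcal H(B^c)$ while $\vec S_N(t_n)\vec\psi^1\rightharpoonup 0$ weakly (local energy decay, Proposition \ref{prop:loc_decay}, or Proposition \ref{prop:linear_scattering} plus the corresponding property of free waves) with $\Vert\vec S_N(t_n)\vec\psi^1\Vert_{\mathcal H(B^c)}$ constant and nonzero; hence $\{\vec U^-(t)\}$ has no strongly convergent subsequence along $t_n\to-\infty$ and its closure is not compact. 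Relatedly, your final paragraph excludes the translated possibilities in the rerun ``exactly as above,'' but above you did \emph{not} exclude the $+\infty$ case — so the compactness argument is circular as written. Even in your case $t_{1,n}=0$ you only obtain that $u_c$ fails $L^5$ on $[0,\infty)$; nothing prevents it from scattering backward, and then precompactness again fails. The fix is the paper's two-sided normalization \eqref{symL5L10infty}, after which the rest of your argument goes through essentially as in the paper.
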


\begin{proof}
If $E_{c}<+\infty$, let $\vec{u}_{0}^{n}$ be a minimising
sequence for $E_{c}$, in the sense that
\begin{equation}
\mathscr{E}(\vec{u}_{0}^{n})\geq  E_{c},\ \lim_{n\rightarrow\infty}\mathscr{E}(\vec{u}_{0}^{n})=E_{c},\ \mathscr{S}_{N}(\cdot)\vec{u}_{0}^n\notin L^{5}L^{10}.\label{eq:suite_u0n}
\end{equation}
{Translating $u_n=\mathscr{S}_N(\cdot)\vec{u}_0^n$ in time if necessary, we may assume
\begin{equation}
 \label{symL5L10infty}
\lim_{n\to\infty} \|u_n\|_{L^{5}\left((0,+\infty),L^{10}\right)}=\lim_{n\to\infty} \|u_n\|_{L^{5}\left((-\infty,0),L^{10}\right)}=+\infty,
 \end{equation} 
where by convention $\|u_n\|_{L^{5}\left((-\infty,0),L^{10}\right)}=+\infty$ if $u_n\notin L^{5}\left((-\infty,0),L^{10}\right)$, and similary for $L^{5}\left((0,\infty),L^{10}\right)$.
}
As $\vec{u}_{0}^{n}$ is bounded in $\mathcal{H}(B^c)$, we
can, up to a subsequence, decompose it into profiles according to Proposition
\ref{prop:linprof}: 
\begin{equation}
\vec{u}_{0}^{n}=\sum_{j=1}^{J}\vec{S}_{N}(-t_{j,n})\sigma_{\lambda_{j,n}}\vec{\psi}^{j}+\vec{w}_{n}^{J}.\label{eq:dec_u0n}
\end{equation}
\ednote{\color{pgreen}Ici j'ai pris mon courage à deux mains, et pour être coherent avec les notations j'ai chang\'e la definition des profils et appelé $U$ les profils Neumann et $V$ les profils $\mathbb R ^3$. Ce qui conduit à un peu de réorganisation au début de ce qui suit. En conséquence de quoi, par la suite la plupart des $U$ et $V$ sont échangés par rapport a la version précédente. Je n'ai pas indiqué ces modifications par des couleurs.}To each profile $(\vec{\psi}^{j},(\lambda_{j,n})_{n\geq1},(t_{j,n})_{n\geq1})$
we associate a {family of nonlinear Neumann profiles $({U_n^{j}})_{n\geq1}$, and additionally, for
$j\in J_{\text{diff}}$, a free nonlinear profile $V^j$ and its rescaled family $({V_n^{j}})_{n\geq1}$,} in the following way:
\begin{itemize}
\item If $j\in J_{\text{comp }}$ i.e. $\lambda_{j,n}=1$, let ${U^{j}}$ be the only
solution of the critical nonlinear wave equation with Neumann boundary conditions \eqref{NLWrad}--\eqref{NBDrad}, given by Proposition \ref{prop:perturb_scat} such that
\begin{equation}
\lim_{n\rightarrow\infty}\left\Vert{\vec{U}^{j}}({-t_{j,n}})-\vec{S}_{N}(-t_{j,n})\vec{\psi}^{j}\right\Vert_{\mathcal{H}(B^c)}=0,\label{eq:def_Uj_comp}
\end{equation} {
and we set
\begin{equation}
U_{n}^{j}(t):=U^{j}({t-t_{j,n}}).\label{eq:def_Vnj_comp}
\end{equation}
Notice that, if $-t_{j,n}\to \pm\infty$, $U^{j}\in L^{5}(\mathbb{R}_{\pm},L^{10}( B^c))$ by construction.}

\item 
If $j\in J_{\text{diff}},$ i.e. $\lambda_{j,n}\rightarrow\infty$, by Lemma \ref{lem:dil_lin},
$$ \lim_{n\to\infty} \left\|\vec{S}_{N}(-t_{j,n})\sigma_{\lambda_{j,n}}\vec{\psi}^{j} -\vec{S}_{\RR^3}(-t_{j,n})\sigma_{\lambda_{j,n}}\vec{\psi}^{j}\right\|_{\HHH {(B^c)}}=0. $$
Furthermore, denoting by $V_L^j(t):=S_{\RR^3}(t){\psi}^{j}(t)$, we have
$$S_{\RR^3}(t-t_{j,n})\sigma_{\lambda_{j,n}}{\psi}^{j}=\frac{1}{\lambda_{j,n}^{1/2}}V^j_L\left(\frac{t-t_{j,n}}{\lambda_{j,n}},\frac{x}{\lambda_{j,n}}  \right).$$
We define {the free nonlinear profile} $V^j$ as the unique solution of the critical nonlinear wave equation on $\RR^3$ such that if $t_{j,n}=0$ for all $n$, $\vec{V}^{j}(0)={\psi}^{j} $
and if $\lim_{n\to\infty} -t_{j,n}/\lambda_{j,n}=\pm\infty$, $\lim_{t\to \pm\infty} \left\|\vec{V}^{j}(t)-\vec{V}_L^{j}(t)\right\|_{\HHH {(B^c)}}=0$. In other terms:
\begin{equation}
\label{eq:def_Uj_diff}
\lim_{n\to \pm\infty} \left\|\vec{V}^{j}(-t_{j,n}/\lambda_{j,n})-\vec{V}_L^{j}(-t_{j,n}/\lambda_{j,n})\right\|_{\HHH {(B^c)}}=0.
\end{equation}
{ Furthermore, we set
$$
{V}_{n}^{j}(t):=\frac{1}{\lambda_{j,n}^{1/2}}V^{j}\Big(\frac{t-t_{j,n}}{\lambda_{j,n}}\Big),
$$
and we then define the associated family of nonlinear Neumann profiles as
\begin{equation}
U_{n}^{j}(t):=\mathscr{S}_N(t)\left(\vec{V}_n^j(0)\right)=\mathscr{S}_{N}(t)\left(\sigma_{\lambda_{j,n}}
\Big(\vec{V}^{j}\Big(\frac{-t_{j,n}}{\lambda_{j,n}}\Big)\Big)\right).
\label{eq:def_Vnj_diff}
\end{equation} 
Observe that, as a solution
of a defocusing nonlinear wave equation in $\mathbb{R}^{3}$, for
which the scattering is well known, we have $V^j\in L^{5}L^{10}(\mathbb{R}^{3})$.
Furthermore, as $\vec{U}_n^j(0) = \vec{V}_n^j(0)$, Lemma \ref{lem:nl_dil} (used with $t_n= t_{j,n}$) yields
$$
\forall j\in J_{\text {diff}},\hspace{1em}\sup_n \Vert U_n^j\Vert_{L^{5}(\mathbb{R},L^{10}( B^c))} < \infty,
$$
and
\begin{equation}
\forall j\in J_{\text{diff}},\hspace{1em}\sup_{t}\left\|\vec{V}_{n}^{j}(t)-\vec{U}_{n}^{j}(t)\right\|_{\HHH {(B^c)}}+\Vert V_{n}^{j}-U_{n}^{j}\Vert_{L_{t}^{5}L_{x}^{10}}\underset{n\to\infty}{\longrightarrow}0.\label{eq:comp_U_V}
\end{equation} }

\end{itemize}

Let us assume from now on, by contradiction, that the decomposition \eqref{eq:dec_u0n}
has strictly more than one non trivial profile, i.e
\begin{equation}
J>1.\label{eq:more_than_one_profile}
\end{equation}
Then, by the Pythagorean expansion \eqref{eq:pl_pyth_1} together
with its $L^{6}$ version \eqref{eq:pl_pyth_L6}
\[
\forall j\in J_{\text{comp}},\hspace{1em}\limsup_{n\rightarrow\infty}\;\mathscr{E}\left(S_{N}(-t_{j,n})\vec{\psi}^{j}\right)<E_{c}.
\]
Hence, by \eqref{eq:def_Uj_comp}, $\mathscr{E}(U^j)<E_C$,
and ${U^{j}}\in L^{5}L^{10}( B^c)$ by the definition of the critical
energy. Summing up, we have {
\begin{equation}
\forall j\in J_\text{comp},\hspace{0.3em}  U^j\in{L^{5}(\mathbb{R},L^{10}( B^c))};\; \forall j\in J_\text{diff},\hspace{0.3em}  V^j\in{L^{5}(\mathbb{R},L^{10}( B^c))}.\label{eq:tout_est_diffusif}
\end{equation} }

Let $u_{n}:=\mathscr{S}_{N}\vec{u}_{0}^{n}$. We will show
the following nonlinear profile decomposition:
\begin{prop}
\label{prop:non_lin_prof}We have
\begin{align}
\forall J,\ u_{n}(t)  &=\sum_{1\leq j \leq J} U_n^j(t) + R_{n}^{J}(t) \label{eq:dec_uj} \\
&= \sum_{\substack{j\in J_{\text{comp}}\\1\leq j\leq J}} U_n^j(t)+ \sum_{\substack{j\in J_{\text{diff}}\\1\leq j\leq J}} V_n^j(t) + \tilde R_{n}^{J}(t), \nonumber
\end{align}
where
\begin{equation*}
\lim_{J\rightarrow\infty}\limsup_{n\rightarrow\infty}\Vert R_{n}^{J}\Vert_{L^{5}L^{10}}= \lim_{J\rightarrow\infty}\limsup_{n\rightarrow\infty}\Vert \tilde R_{n}^{J}\Vert_{L^{5}L^{10}}=0.
\end{equation*}
\end{prop}

{To this purpose, let\ednote{\color{pgreen}à partir d'ici la plupart des $U$ et $V$ sont échangés par rapport a la version précédente, sans indication de couleur. J'ai aussi changé $v$ en $\tilde u$ puisque c'est une solution de Neumann}}
\begin{equation}
\tilde u_{n}^{J}:=\sum_{j=1}^{J}U_{n}^{j}+z_{n}^{J},\label{eq:def_vnj}
\end{equation}
where 
\begin{equation} \label{eq:def_znJ}
z_{n}^{J}(t):=S_{N}(t)\vec{w}_{n}^{J}, 
\end{equation} 
verifies, by the decay of the remainder of the linear profile
decomposition 
\begin{equation}
\lim_{J\to\infty}\limsup_{n\to\infty}\Vert z_{n}^{J}\Vert_{L^{5}L^{10}}=0.\label{eq:dec_znJ}
\end{equation}

Observe that $\tilde u_{n}^{J}$ is solution in $ B^c$ of the following nonlinear wave
equation with Neumann boundary conditions:
\begin{equation}
(\partial_{t}^{2}-\Delta_{N}){\tilde u}_{n}^{J}+({\tilde u}_{n}^{J})^{5}=e_{n}^{J},\hspace{1em}\text{with }e_{n}^{J}:=({\tilde u}_{n}^{J})^{5}-\sum_{j=1}^{J}(U_{n}^{j})^{5}.\label{eq:eq_vnj}
\end{equation}
Let us show\ednote{Thomas: j'ai chang\'e la proposition suivante en lemme.}
\begin{lem}
We have
\begin{equation}
\lim_{J\to\infty}\limsup_{n\rightarrow\infty}\Vert e_{n}^{J}\Vert_{L^{1}L^{2}}=0,\label{eq:decay_en_J}
\end{equation}
and
\begin{equation}
\vec {\tilde u}_{n\restriction t = 0}^J
=\vec{u}_{n}+\vec{\alpha}_{n}^{J};\hspace{1em}\ \lim_{J\to\infty}\limsup_{n\to\infty}\Vert S_N(\cdot) \alpha_{n}^{J}\Vert_{L^5L^{10}}=0.\label{eq:v_n_data}
\end{equation}
\end{lem}

\begin{proof}
We will first show \eqref{eq:decay_en_J}. We have\ednote{Thomas: $A^4B\leq A^5+A B^4$ par l'in\'egalit\'e de Young.}
\begin{equation}
|e_{n}^{J}| \lesssim_{J}\sum_{1\leq j\neq k\leq J}|U_{n}^{j}|^{4}|U_{n}^{k}|+|z_{n}^{J}|^{5}+|z_{n}^{J}|\sum_{j=1}^{J}|U_{n}^{j}|^{4}.
\end{equation}
Let us begin with the mixed terms $|U_{n}^{j}|^{4}|U_{n}^{k}|$. We
start with the case $j,k\in J_{\text{diff}}$. Notice that
\[
|U_{n}^{j}|^{4}|U_{n}^{k}|\leq|V_{n}^{j}|^{4}|V_{n}^{k}|+|U_{n}^{j}|^{4}|V_{n}^{k}-U_{n}^{k}|+|V_{n}^{j}||V_{n}^{k}-U_{n}^{k}|^{4},
\]
thus we get, by Hölder inequality
\begin{multline}
\left\Vert|U_{n}^{j}|^{4}|U_{n}^{k}|\right\Vert_{L^{1}L^{2}}\leq\left\Vert|V_{n}^{j}|^{4}|V_{n}^{k}|\right\Vert_{L^{1}L^{2}}+\left\Vert U_{n}^{j}\right\Vert_{L^{5}L^{10}}^{4}\left\Vert V_{n}^{k}-U_{n}^{k}\right\Vert_{L^{5}L^{10}}\\
+\left\Vert V_{n}^{j}\right\Vert_{L^{5}L^{10}}\left\Vert V_{n}^{k}-U_{n}^{k}\right\Vert_{L^{5}L^{10}}^{4}.\label{eq:ineq_Vnjk_jk_diff}
\end{multline}
On the one hand, as $V_{n}^{j}$ and $V_{n}^{k}$ are rescaled solutions
of the defocusing critical nonlinear wave equation in $\mathbb{R}^{3}$ associated with orthogonal parameters,
it is well known that, as $n$ goes to infinity (see for example \cite{BahouriGerard99})
\begin{equation}
\left\Vert|V_{n}^{j}|^{4}|V_{n}^{k}|\right\Vert_{L^{1}L^{2}}\longrightarrow0.\label{eq:ineq_Vnjk_jk_diff_1}
\end{equation}
On the other hand, as
\[
\sup_{n}\ \left\Vert U_{n}^{j}\right\Vert_{L^{5}L^{10}}+\left\Vert V_{n}^{j}\right\Vert_{L^{5}L^{10}}<\infty,
\]
it follows from \eqref{eq:comp_U_V} that
\begin{equation}
\left\Vert U_{n}^{j}\right\Vert_{L^{5}L^{10}}^{4}\left\Vert V_{n}^{k}-U_{n}^{k}\right\Vert_{L^{5}L^{10}}+\left\Vert V_{n}^{j}\right\Vert_{L^{5}L^{10}}\left\Vert V_{n}^{k}-U_{n}^{k}\right\Vert_{L^{5}L^{10}}^{4}\longrightarrow0\label{eq:ineq_Vnjk_jk_diff_2}
\end{equation}
as $n$ goes to infinity, and thus \eqref{eq:ineq_Vnjk_jk_diff} combined
with \eqref{eq:ineq_Vnjk_jk_diff_1} and \eqref{eq:ineq_Vnjk_jk_diff_2}
gives
\begin{equation}
\left\Vert|U_{n}^{j}|^{4}|U_{n}^{k}|\right\Vert_{L^{1}L^{2}}\longrightarrow0,\ \text{for }j,k\in J_{\text{diff}}.\label{eq:Vmjk_dec_1}
\end{equation}
Let us now assume that $j\in J_{\text{comp}}$ and $k\in J_{\text{diff}}$.
We have, in a same way as before
\begin{equation}
\left\Vert|U_{n}^{j}|^{4}|U_{n}^{k}|\right\Vert_{L^{1}L^{2}}\leq\left\Vert|U_{n}^{j}|^{4}|V_{n}^{k}|\right\Vert_{L^{1}L^{2}}+\left\Vert U_{n}^{j}\right\Vert_{L^{5}L^{10}}^4\left\Vert V_{n}^{k}-U_{n}^{k}\right\Vert_{L^{5}L^{10}}.\label{eq:Vnj_Vnk_compdiff_1}
\end{equation}
On the one hand, we already saw that for $k\in J_{\text{diff}}$
\begin{equation}
\left\Vert U_{n}^{j}\right\Vert_{L^{5}L^{10}}^4\left\Vert V_{n}^{k}-U_{n}^{k}\right\Vert_{L^{5}L^{10}}\underset{n\to\infty}{\longrightarrow}0.\label{eq:Vnj_Vnk_compdiff_2}
\end{equation}
On the other hand, by Hölder inequality and change of variables
\begin{multline*}
\left\Vert|U_{n}^{j}|^{4}|V_{n}^{k}|\right\Vert_{L^{1}L^{2}}\leq\left\Vert U_{n}^{j}\right\Vert_{L^{5}L^{10}}^{3}\left\Vert V_{n}^{k}U_{n}^{j}\right\Vert_{L^{5/2}L^{5}}\\
=\left\Vert U^{j}\right\Vert_{L^{5}L^{10}}^{3}\frac{1}{\sqrt{\lambda_{k,n}}}\Big(\int\Big(\int_{r\geq1}U^{j}({t-t_{j,n}},{x})^{5}V^{k}\Big(\frac{t-t_{k,n}}{\lambda_{k,n}},\frac{x}{\lambda_{k,n}}\Big)^{5}\ dx\Big)^{1/2}dt\Big)^{2/5}\\
=\left\Vert U^{j}\right\Vert_{L^{5}L^{10}}^{3}\frac{1}{\sqrt{\lambda_{k,n}}}\Big(\int\Big(\int_{r\geq1}U^{j}(s,y)^{5}V^{k}\Big(\frac{s+t_{j,n}-t_{k,n}}{\lambda_{k,n}},\frac{y}{\lambda_{k,n}}\Big)^{5}\ dy\Big)^{1/2}ds\Big)^{2/5}.
\end{multline*}
As the above expression is uniformly continuous in $V^{k}\in L^{5}L^{10}$,
we can assume that $V^{k}$ is contiuous and compactly supported.
Then we get
\begin{equation}
\left\Vert|U_{n}^{j}|^{4}|V_{n}^{k}|\right\Vert_{L^{1}L^{2}}\lesssim\frac{1}{\sqrt{\lambda_{k,n}}}\longrightarrow0\label{eq:Vnj_Vnk_compdiff3}
\end{equation}
and thus by \eqref{eq:Vnj_Vnk_compdiff_1}, \eqref{eq:Vnj_Vnk_compdiff_2}
and \eqref{eq:Vnj_Vnk_compdiff3}
\begin{equation}
\left\Vert|U_{n}^{j}|^{4}|U_{n}^{k}|\right\Vert_{L^{1}L^{2}}\longrightarrow0,\ \text{for }j\in J_{\text{comp}},\ k\in J_{\text{diff}}.\label{eq:Vmjk_dec_2}
\end{equation}
In a similar fashion we obtain
\begin{equation}
\left\Vert|U_{n}^{j}|^{4}|U_{n}^{k}|\right\Vert_{L^{1}L^{2}}\longrightarrow0,\ \text{for }k\in J_{\text{comp}},\ j\in J_{\text{diff}}.\label{eq:Vmjk_dec_3}
\end{equation}
To conclude with the mixed term $|U_{n}^{j}|^{4}|U_{n}^{k}|$, let
us deal with the case $j,k\in J_{\text{comp}}$. Then
\begin{equation}
\left\Vert|U_{n}^{j}|^{4}|U_{n}^{k}|\right\Vert_{L^{1}L^{2}}
=\int\Big(\int_{r\geq1}U^{j}\big({t-t_{j,n}},{x}\big)^{8}U^{k}\big({t-t_{k,n}},{x}\big)^{2}\ dx\Big)^{1/2}dt.\label{eq:Vnj_Vnk_compcomp_1}
\end{equation}
By orthogonality of the parameters,
\begin{equation}
|t_{j,n}-t_{k,n}|\longrightarrow+\infty,\label{eq:Vnj_Vnk_compcomp_1.5}
\end{equation}
but, by change of variable $s=t-t_{j,n}$ we obtain from \eqref{eq:Vnj_Vnk_compcomp_1}
\[
\left\Vert|U_{n}^{j}|^{4}|U_{n}^{k}|\right\Vert_{L^{1}L^{2}}=\int\Big(\int_{r\geq1}U^{j}\big({s},{x}\big)^{8}U^{k}\big({s+t_{j,n}-t_{k,n}},{x}\big)^{2}\ dx\Big)^{1/2}ds.
\]
Again, as this expression is uniformly continuous in $(U^{j},U^{k})\in L^{5}L^{10}$,
we may assume that both are continuous and compactly supported. But
for such functions, the above expression vanishes for $n$ large enough
by \eqref{eq:Vnj_Vnk_compcomp_1.5}. Thus we have
\begin{equation}
\left\Vert|U_{n}^{j}|^{4}|U_{n}^{k}|\right\Vert_{L^{1}L^{2}}\longrightarrow0,\ \text{for }j,k\in J_{\text{comp}}.\label{eq:Vmjk_dec_4}
\end{equation}
We dealt with all the cases \eqref{eq:Vmjk_dec_1}, \eqref{eq:Vmjk_dec_2},
\eqref{eq:Vmjk_dec_3}, \eqref{eq:Vmjk_dec_4} and shown that
\begin{equation}
\forall J,\ \left\Vert\sum_{1\leq j\neq k\leq J}|U_{n}^{j}|^{4}|U_{n}^{k}|\right\Vert_{L^{1}L^{2}}\longrightarrow0.\label{eq:enj_dec_mix}
\end{equation}
Finally, by the decay of the remainder \eqref{eq:dec_znJ},
\begin{equation}
\lim_{J\to\infty}\limsup_{n\to\infty}\left\Vert|z_{n}^{J}|^{5}\right\Vert_{L^{1}L^{2}}=\lim_{J\to\infty}\limsup_{n\to\infty}\left\Vert z_{n}^{J}\right\Vert_{L^{5}L^{10}}^{5}=0,\label{eq:e_n_j_wnj}
\end{equation}
and moreover, by Minkowski and Hölder inequalities
\begin{equation}
\bigg\Vert||z_{n}^{J}|\sum_{j=1}^{J}|U_{n}^{j}|^{4}\bigg\Vert_{L^{1}L^{2}} \leq\left\Vert z_{n}^{J}\right\Vert_{L^{5}L^{10}}\sum_{j=1}^{J}\left\Vert U_{n}^{j}\right\Vert_{L^{5}L^{10}}^{4}.
\label{e_n_j_mixed_wV}
\end{equation}
By \eqref{eq:dec_znJ}, 
\begin{equation}
 \label{mixed0}
 \lim_{J\to\infty}\limsup_{n\to\infty}\bigg\Vert||z_{n}^{J}|\sum_{j=1}^{J}|U_{n}^{j}|^{4}\bigg\Vert_{L^{1}L^{2}} =0.
\end{equation} 
Combining \eqref{eq:enj_dec_mix},
\eqref{eq:e_n_j_wnj}, \eqref{e_n_j_mixed_wV} and \eqref{mixed0}, we thus proved
the $L^{1}L^{2}$ decay of the error term $e_{n}^{J}$, that is \eqref{eq:decay_en_J}.

Let us now show \eqref{eq:v_n_data}. We have, by the definition of
${\tilde u}_{n}^{J}$ \eqref{eq:def_vnj}, of the remainder \eqref{eq:def_znJ}
and of the modified profiles \eqref{eq:def_Vnj_diff}, \eqref{eq:def_Vnj_comp}\ednote{Thomas: pour faire plusieurs ligne sous un signe somme, tu peux utiliser l'instruction $\backslash$\textit{substack} {\color{pgreen}Merci!}}
\begin{equation}
\vec{\tilde u}_{n}^{j}(0)=\sum_{\substack{
j\in J_{\text{comp}}\\
j\leq J}}\vec{U}^j({-t_{j,n}})
+\sum_{\substack{
j\in J_{\text{diff}}\\
j\leq J
}}
\sigma_{\lambda_{j,n}}
\Big(\vec{V}^{j}\Big(\frac{-t_{j,n}}{\lambda_{j,n}}\Big)\Big)+\vec{w}_{n}^{J}.
\label{eq:vnJ(0)_dec}
\end{equation}
As a consequence of the definition \eqref{eq:def_Uj_comp} of $U^{j}$
for $j\in J_{\text{comp}}$, we have, in $\mathcal{H}(B^c)$, as $n$ goes
to infinity
\begin{equation}
\forall j\in J_{\text{comp}},\hspace{1em}\vec{U}^j({-t_{j,n}})=\vec{S}_{N}(-t_{j,n})\vec{\psi}^{j}\ +o_n(1).\label{eq:alpha_n_comp}
\end{equation}
Let us deal now with the first component of the diffusive profiles,
the derivative component being handled in the same fashion. For $j\in J_{\text{diff}}$,
by the definition \eqref{eq:def_Uj_diff}, this first component verifies,
in $\dot{H}^{1}$
\begin{multline}
\sigma_{\lambda_{j,n}}\left(V^{j}\Big(\frac{-t_{j,n}}{\lambda_{j,n}}\Big)\right)
=\frac{1}{\lambda_{j,n}^{1/2}}V^{j}\left(\frac{-t_{j,n}}{\lambda_{j,n}},\frac{\cdot}{\lambda_{j,n}}\right)\\
=\frac{1}{\lambda_{j,n}^{1/2}}V^{j}_L\left(\frac{-t_{j,n}}{\lambda_{j,n}},\frac{\cdot}{\lambda_{j,n}}\right)+o_n(1)
=S_{N}(-t_{j,n})\sigma_{\lambda_{j,n}}\vec{\psi}^{j}\ +o_n(1),\label{eq:alpha_n_difff}
\end{multline}
{where at the last line we have used Lemma \ref{lem:dil_lin}.}
This last expansion \eqref{eq:alpha_n_difff}, together with the similar
one for the derivative component and \eqref{eq:alpha_n_comp}, \eqref{eq:vnJ(0)_dec},
the linear profile decomposition \eqref{eq:dec_u0n} and the Strichartz estimates for the Neumann flow (Proposition \ref{prop:Strichartz}) gives \eqref{eq:v_n_data},
and ends the proof of the lemma. 
\end{proof}
The proof of the nonlinear profile decomposition follows:
\begin{proof}[Proof of Proposition \ref{prop:non_lin_prof}]
By \eqref{eq:eq_vnj} together with \eqref{eq:decay_en_J} and \eqref{eq:v_n_data},
the perturbative result of Proposition \ref{lem:perturb} gives, together
with \eqref{eq:dec_znJ}
\[
u_{n}=\tilde u_{n}^{J}+\tilde{R}_{n}^{J},
\]
with
\[
\lim_{J\rightarrow\infty}\limsup_{n\rightarrow\infty}\Vert\tilde{R}_{n}^{J}\Vert_{L^{5}L^{10}}=0.
\]
But \eqref{eq:comp_U_V} enables us to replace all the $U_{n}^{j}$ by
$V_{n}^{j}$ for $j \in J_{\text{diff}}$ in the definition \eqref{eq:def_vnj} of $\tilde u_{n}^{J}$ and
ends the proof of the nonlinear profile decomposition.
\end{proof}
We are now in position to end the proof of the theorem. Indeed, by
Proposition \ref{prop:non_lin_prof} together with (\ref{eq:tout_est_diffusif}), $u_{n}$ { is in $L^{5}L^{10}$} for $n$ large enough, and \eqref{eq:suite_u0n} is
contradicted. Therefore the assumption \eqref{eq:more_than_one_profile}
cannot hold, that is, $J=1$: there is only one non-trivial profile
in the decomposition \eqref{eq:suite_u0n}:
\begin{equation}
\label{dev_u0n}
\vec{u}_{0}^{n}=S_{N}(-t_{1,n})\sigma_{\lambda_{1,n}}\vec{\psi}^{1}+\vec{w}_{n},\hspace{1em}\Vert S_{N}(\cdot)\vec{w}_{n}\Vert_{L^{5}L^{10}}\longrightarrow0  
\end{equation} 

Let us show that it is the time-compact ($t_{1,n}=0$),
scaling-compact ($\lambda_{1,n}=1$) one.\ednote{Thomas: J'ai un peu r\'eorganis\'e la fin de cet d\'emo. Je n'ai pas colori\'e tous les changements} 

As noticed before,
as the scattering in the free space $\mathbb{R}^{3}$ is well known,
we have $V^j\in L^{5}L^{10}$ for any $j\in J_{\text{diff}}$. Therefore, if $1\in J_{\text{diff}}$,
the same proof as before yields
the decomposition:\ednote{Thomas: attention si on veut cette d\'ecomposition pour tout temps il faut que $U^1$ soit scattering (ce qui est une des raisons de ma r\'eorganisation).{\color{pgreen} en effet, merci!}} 
\begin{equation}
u_{n}(t) =\frac{1}{\lambda_{1,n}^{1/2}}V^{1}\Big(\frac{t-t_{1,n}}{\lambda_{1,n}},\frac{\cdot}{\lambda_{1,n}}\Big)+R_{n}(t)\label{eq:dec_uj-1}
\end{equation}
with 
\begin{equation}
\label{decRn}
\limsup_{n\rightarrow\infty}\Vert R_{n}\Vert_{L^{5}L^{10}}=0,
\end{equation}
proving that $u_n\in L^5L^{10}$, a contradiction. Thus $1\in J_{\text{comp}}$ i.e. $\lambda_{1,n}=1$.

It remains to eliminate the case $t_{1,n}\longrightarrow\pm\infty$.
Recall that
\begin{equation}
\label{sym_infty}
\Vert u_{n} \Vert_{L^{5}\left((-\infty,0)L^{10}\right)}\longrightarrow\infty,\hspace{1em}\Vert u_{n}\Vert_{L^{5}\left((0,+\infty)L^{10}\right)}\longrightarrow\infty.
 \end{equation} 
Let us for example assume, by contradiction, that $t_{1,n}\longrightarrow+\infty$.
This implies
$$\lim_{n\to\infty}\left\|S_N(\cdot-t_{1,n})\vec{\psi}^1\right\|_{L^5\left((-\infty,0)L^{10}\right)}=0,$$
and we obtain, by the small data well-posedness theory, that for large $n$, $u_n\in L^{5}((-\infty,0), L^{10})$ with 
$$\lim_{n\to\infty} \|u_n\|_{L^5((-\infty,0),L^{10})}=0,$$
contradicting \eqref{sym_infty}. 
The case $t_{1,n}\longrightarrow-\infty$
is eliminated in the same way.

Therefore, $\vec{u}_{0}^{n}$ writes:
\[
\vec{u}_{0}^{n}=\vec{\psi}^{1}+\vec{w}_{n},\hspace{1em}\Vert S_{N}(\cdot)\vec{w}_{n}\Vert_{L^{5}L^{10}}\longrightarrow0.
\]
Notice that, by the Pythagorean expansion \eqref{eq:pl_pyth_1} together
with its $L^{6}$ version \eqref{eq:pl_pyth_L6}, $\mathscr{E}(\vec{\psi}^{1})\leq E_{c}$,
and therefore
\[
\mathscr{E}(\vec{\psi}^{1})=E_{c}
\]
otherwise, by \eqref{eq:dec_uj-1} and the definition of $E_{c}$,
$u_{n}$ scatters. This implies, by the Pythagorean expansion again,
together with \eqref{eq:suite_u0n}
\[
\Vert\vec{w}_{n}\Vert_{\mathcal{H}(B^c)}\longrightarrow0.
\]
We take $\vec{u}_{c}$ to be this profile:

\[
\vec{u}_{c}:=\vec{\psi}^{1}.
\]
By the conservation of energy, we have $\mathscr{E}(\vec{\mathscr{S}}_{N}(t)\vec{u}_{c})=E_{c}$
for any $t$, and the same argument applied to
\[
\vec{\mathscr{S}}_{N}(t_{n})\vec{u}_{c}
\]
for any sequence $(t_{n})_{n\geq1}\in\mathbb{R}^{\mathbb{N}}$ shows
that the flow $\big\{ t\in\mathbb{R},\;\vec{ \mathscr{S}}_{N}(t)\vec{u}_{c}\big\} $
has a compact closure in $\mathcal{H}(B^c)$. Indeed this sequence satisfies the same assumptions
as $\vec{u}^0_n$ at the beginning of the proof, and will therefore have a convergent
subsequence in $\mathcal{H}(B^c)$ as well. Finally, observe that $\mathscr{E}(\vec{u}_{c})=E_{c}>0$
insures in particular that $\vec{u}_{c} \neq \vec{0}$.
\end{proof}

\section{Rigidity}
\label{section:rigidity}
In this section we prove:
\begin{thm}
\label{thm:rigidity}
 Let $(u_0,u_1)\in \mathcal{H}(B^c)$, radial, and $u(t)=\mathscr{S}_N(t)(u_0,u_1)$ be a solution of the energy critical defocusing wave equation outside the unit ball with Neumann boundary conditions (\ref{NLWrad})-(\ref{NBDrad})-(\ref{IDrad}). Assume that $u$ is global and that 
 $$ K=\Big\{\vec{u}(t),\; t\in \mathbb{R}\Big\}$$
has compact closure in $\mathcal{H}(B^c)$. Then $u = 0$.
 \end{thm}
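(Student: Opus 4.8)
The plan is to run the rigidity step of the concentration--compactness scheme via the channels-of-energy method of \cite{DuKeMe11a,DuKeMe13,DuKeMe12c}, adapted to the exterior Neumann geometry, and thereby show that a compact-flow solution must be time-independent and hence identically zero.

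\emph{Step 1 (compactness $\Rightarrow$ no energy escapes the light cone).} First I would extract the consequences of precompactness of $K$: there is a modulus $\omega$ with $\omega(R)\to 0$ as $R\to\infty$ such that $\sup_{t\in\RR}\big(\|\vec u(t)\|_{(\dot H^1\times L^2)(\{|x|>R\})}+\|u(t)\|_{L^6(\{|x|>R\})}\big)\le\omega(R)$. By the radial Sobolev estimate this yields the pointwise bound $|u(t,r)|\le r^{-1/2}\omega(r)$ for $r\ge1$, uniformly in $t$, and (cf.\ Remark~\ref{R:global}) $\|u\|_{L^\infty(\RR\times B^c)}<\infty$. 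In particular, for every fixed $R>1$,
\[
\lim_{t\to\pm\infty}\int_{\{|x|>R+|t|\}\cap B^c}\big(|\nabla u(t,x)|^2+|\partial_t u(t,x)|^2\big)\,dx=0 .
\]

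\emph{Step 2 (channels of energy in the exterior cone).} Next, for $R>1$ and any $t_0\in\RR$, the region $\{|x|>R+|t|\}$ never meets the domain of influence of the obstacle, so by finite speed of propagation the Neumann evolution $t\mapsto u(t_0+t)$ restricted there depends only on the restriction of $\vec u(t_0)$ to $\{|x|>R\}$ and, up to the Duhamel term of the source $-u^5$, coincides with the $\RR^3$ free evolution of that datum. Using Step~1 to control the source in the exterior cone, I would then invoke the exterior energy lower bound for radial free waves in $\RR^3$ from \cite{DuKeMe13} --- whose only direction degenerate for \emph{both} limits $t\to+\infty$ and $t\to-\infty$ is $\mathrm{span}\{(|x|^{-1},0)\}$ --- to conclude, together with the vanishing of Step~1 and the uniformity of all estimates in $t_0$, that there is $R^{\ast}>1$ with
\[
\vec u(t_0)=\Big(\frac{c_{t_0}}{|x|},\,0\Big)\quad\text{on }\{|x|>R^{\ast}\}\qquad\text{for every }t_0\in\RR,
\]
with $c_{t_0}$ constant.

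\emph{Steps 3--4 (rigidity and propagation to the obstacle).} Since $\partial_t u(t_0)\equiv0$ on $\{|x|>R^{\ast}\}$ for all $t_0$, $u$ is $t$-independent on $\{|x|>R^{\ast}\}$, so $u=c|x|^{-1}$ there with $c$ fixed; substituting into \eqref{NLWrad} gives $c^5|x|^{-5}\equiv0$, i.e.\ $c=0$ --- equivalently, $c/|x|$ would be a nontrivial stationary solution, excluded because multiplying $-\Delta U+U^5=0$ by $U$ and integrating over $B^c$ kills the boundary term by the Neumann condition and forces $\int|\nabla U|^2+\int U^6=0$. Hence $u\equiv0$ and $\partial_t u\equiv0$ on $\{|x|>R^{\ast}\}\times\RR$. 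To finish, writing $\zeta=|x|u$, which solves $\partial_r^2\zeta-\partial_t^2\zeta=u^4\zeta$ on $(1,\infty)\times\RR$ with bounded potential $u^4$ and vanishing Cauchy data on $\{r=R^{\ast}\}$, I would treat $r$ as an evolution variable: the operator $\partial_r^2-\partial_t^2$ has finite propagation speed, so a standard energy/Gr\"onwall estimate on truncated triangular regions propagates the vanishing of $\zeta$ from $\{r=R^{\ast}\}$ down to $r=1$, giving $u\equiv0$, which is the claim.

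\emph{Main obstacle.} The heart of the matter is Step~2: showing that the Duhamel/nonlinear contribution does not destroy the free channels lower bound in the exterior cone as $t\to\pm\infty$ (naive bounds on $\int\|u^5(t)\|_{L^2(\{|x|>R+|t|\})}\,dt$ fail because of the growth in $|t|$). Overcoming this by combining the radial Sobolev decay of Step~1 with finite speed of propagation is where the real work lies; the remaining steps are comparatively soft.
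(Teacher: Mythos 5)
There is a genuine gap, and it sits exactly where you locate the ``heart of the matter'', but it is deeper than controlling the Duhamel term. The channels-of-energy argument applied to the compact-flow solution does \emph{not} yield the conclusion of your Step~2, namely that $\vec u(t_0)=(c_{t_0}/|x|,0)$ exactly on $\{|x|>R^\ast\}$. What it yields (and what the paper proves, by truncating the data at radius $R$, extending by the constant $u_0(R)$ inside, evolving by the \emph{nonlinear} $\RR^3$ flow so that small-data theory gives a global quintic bound on the nonlinear--linear difference -- this, incidentally, is how the paper disposes of your ``main obstacle'', not via decay of $u$ itself) is only the inequality
\begin{equation*}
\int_R^{+\infty}\big(\partial_r(ru_0)\big)^2+r^2u_1^2\,dr\;\leq\; C\Big(\int_R^{+\infty}\big((\partial_r u_0)^2+u_1^2\big)r^2\,dr\Big)^5\;\lesssim\;R^5u_0^{10}(R),
\end{equation*}
i.e.\ the component of the exterior data orthogonal to the degenerate direction $(1/r,0)$ is controlled by a high power of the full exterior energy. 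This does not force the data to lie exactly on the degenerate line outside a compact set. Indeed, at the nonlinear level the degenerate direction ``fattens'' into a one-parameter family of obstructions: the singular radial solutions $Z_\ell$ of the \emph{defocusing} elliptic equation $\Delta Z_\ell=Z_\ell^5$ on $\{r>\mathfrak{z}\ell^2\}$, with $Z_\ell(r)=\ell/r+O(r^{-3})$. Each $Z_\ell$ is a genuine static solution of \eqref{NLWrad} in the exterior of a large ball, so a solution agreeing with $Z_\ell$ for large $r$ emits no energy into the channels and passes every test in your Steps~1--2, yet it is not of the form $c/r$; your Step~3 computation ($c^5/r^5\equiv 0$) simply does not apply to it. Ruling out this scenario is the actual content of the paper's rigidity proof: a dyadic iteration on the inequality above to extract the limit $\ell=\lim_{r\to\infty}ru_0(r)$ with the rate $|ru_0(r)-\ell|\lesssim r^{-2}$; then, if $\ell\neq 0$, subtraction of $Z_\ell$ and a second channel argument (now with the potential terms $Z_\ell^{5-k}f^k$) showing that $u_0-Z_\ell$ has compact support and then that $u_0=Z_\ell$ all the way down to $r=1$; and finally a contradiction coming from the boundary, either because $Z_\ell$ blows up at $\mathfrak z\ell^2\geq 1$ (against radial Sobolev) or because $Z_\ell'(1)\neq 0$ violates the Neumann condition. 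None of these ingredients (in particular the existence and nondegeneracy $Z_\ell'\neq 0$ of the singular stationary solutions, Proposition \ref{prop:Z}) appears in your proposal, and without them the case $\ell\neq 0$ is not excluded.

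Your remaining steps are, as you say, softer and essentially sound: Step~1 matches the paper's use of compactness, and the sideways energy/Gr\"onwall propagation of vanishing from $r=R^\ast$ to $r=1$ in Step~4 is a legitimate argument (the potential $u^4$ is bounded by the radial Sobolev estimate of Remark~\ref{R:global}); the paper reaches the same endpoint slightly differently, by noting that in the $\ell=0$ case the radius $R_\eps$ can be taken arbitrarily close to $1$. But as written the proof collapses at Step~2: the passage from the channel inequality to ``the exterior data is exactly a multiple of $(1/r,0)$'' is not justified and is in fact false without the $Z_\ell$ analysis.
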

The proof follows the lines of the proof of \cite{DuKeMe13}. 
\subsection{Preliminaries}
We will use the following asymptotic energy property for the wave equation on $\mathbb{R}^3$:
\begin{prop}
\label{prop:channels}
Let $R>0$.
 Let $(v_0,v_1)\in \mathcal H (\RR^3)$ and $v=S_{\RR^3}(v_0,v_1)$ be the solution of the linear wave equation on $\RR^3$ with initial data $(v_0,v_1)$. Then
 $$ \sum_{\pm}\lim_{t\to\pm \infty} \int_{R+|t|}^{+\infty} |\partial_{t,r}(rv(t,r))|^2\,dx
 =\int_R^{+\infty} \left( \partial_{r}(rv_0) \right)^2+r^2v_1^2\,dr.$$
\end{prop}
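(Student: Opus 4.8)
The plan is to use the explicit formula for the three-dimensional radial linear flow. Writing $\zeta(t,r) := r v(t,r)$, the function $\zeta$ solves the one-dimensional wave equation $\partial_t^2 \zeta - \partial_r^2 \zeta = 0$ on $\RR \times [0,\infty)$ with the boundary condition $\zeta(t,0) = 0$ (since $v$ is continuous at the origin when $(v_0,v_1)$ is smooth, so $\zeta(t,0)=0$). Hence, for $r \geq 0$, one has the d'Alembert-type representation $\zeta(t,r) = \frac{1}{2}(\zeta_0(r-t) + \zeta_0(r+t)) + \frac{1}{2}\int_{r-t}^{r+t} \zeta_1(\sigma)\,d\sigma$, where $\zeta_0, \zeta_1$ are extended to all of $\RR$ as \emph{odd} functions to encode the boundary condition at $r=0$. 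Equivalently, $\zeta(t,r) = a(r-t) + b(r+t)$ with $a(s) = \frac{1}{2}\zeta_0(s) - \frac{1}{2}\int_0^s \zeta_1$, $b(s) = \frac{1}{2}\zeta_0(s) + \frac{1}{2}\int_0^s \zeta_1$, both functions on $\RR$, and $a, b$ have $L^2$ derivatives with $\|a'\|_{L^2(\RR)}^2 + \|b'\|_{L^2(\RR)}^2 = \frac{1}{2}\int_\RR ((\zeta_0')^2 + \zeta_1^2) = \int_0^\infty ((\partial_r(rv_0))^2 + r^2 v_1^2)\,dr$ by the oddness.

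First I would argue by density that it suffices to treat smooth, compactly supported $(v_0,v_1)$, using that both sides of the claimed identity are controlled by $\|(v_0,v_1)\|_{\mathcal H(\RR^3)}^2$ (the left-hand side being bounded by the conserved energy) and depend continuously on the data; the limits $t \to \pm\infty$ then exist by a standard argument once the identity is established on a dense set. Next, from $\partial_t \zeta(t,r) = -a'(r-t) + b'(r+t)$ and $\partial_r \zeta(t,r) = a'(r-t) + b'(r+t)$, a direct computation gives $|\partial_t \zeta|^2 + |\partial_r \zeta|^2 = 2(a'(r-t))^2 + 2(b'(r+t))^2$. Therefore
\[
\int_{R+|t|}^{\infty} |\partial_{t,r}(rv(t,r))|^2\,dr = 2\int_{R+|t|}^{\infty} (a'(r-t))^2\,dr + 2\int_{R+|t|}^{\infty} (b'(r+t))^2\,dr.
\]
For $t \to +\infty$: in the first integral substitute $s = r-t$, giving $2\int_{R}^{\infty}(a'(s))^2\,ds$, which is independent of $t$; in the second substitute $s = r+t$, giving $2\int_{R+2t}^{\infty}(b'(s))^2\,ds \to 0$ as $t\to+\infty$ since $b' \in L^2(\RR)$. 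Hence $\lim_{t\to+\infty}\int_{R+|t|}^\infty |\partial_{t,r}(rv)|^2 = 2\int_R^\infty (a'(s))^2\,ds$. Symmetrically, $\lim_{t\to-\infty}\int_{R+|t|}^\infty |\partial_{t,r}(rv)|^2 = 2\int_R^\infty (b'(s))^2\,ds$ (here for $t\to-\infty$ the roles of $a'$ and $b'$ swap after substitution).

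Summing the two limits yields $2\int_R^\infty ((a'(s))^2 + (b'(s))^2)\,ds$. It remains to identify this with the right-hand side. Using $a' = \frac{1}{2}(\zeta_0' - \zeta_1)$ and $b' = \frac{1}{2}(\zeta_0' + \zeta_1)$, one has $(a')^2 + (b')^2 = \frac{1}{2}((\zeta_0')^2 + \zeta_1^2)$, so $2\int_R^\infty ((a')^2 + (b')^2)\,ds = \int_R^\infty ((\zeta_0'(r))^2 + \zeta_1(r)^2)\,dr = \int_R^\infty ((\partial_r(rv_0))^2 + r^2 v_1^2)\,dr$, which is exactly the claimed right-hand side. (Note that since $R > 0$, the cross terms $\zeta_0' \zeta_1$ that would come from oddness issues near the origin never enter, so the argument is clean.)

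The only mild subtlety — and the step deserving the most care — is the reduction to smooth compactly supported data and the justification that the limits $t\to\pm\infty$ of $\int_{R+|t|}^\infty |\partial_{t,r}(rv)|^2$ exist for general data in $\mathcal H(\RR^3)$: one shows the functional $t \mapsto \int_{R+|t|}^\infty |\partial_{t,r}(rv(t,r))|^2\,dr$ is, for fixed $t$, a continuous function of the data uniformly in $t$ (by the energy identity $\int_0^\infty |\partial_{t,r}(rv)|^2 = \int_0^\infty ((\partial_r(rv_0))^2 + r^2 v_1^2)$, conserved in $t$), so that the convergence established on the dense smooth class passes to the limit. Everything else is the elementary change-of-variables computation above.
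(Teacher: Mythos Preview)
Your proof is correct and follows exactly the approach the paper indicates: the paper omits the proof, remarking only that it ``relies on the equation $(\partial_t^2-\partial_r^2)(rv)=0$,'' and your d'Alembert decomposition $\zeta(t,r)=a(r-t)+b(r+t)$ together with the change-of-variables computation is precisely the intended argument. One small simplification: since for $r\geq R+|t|$ both $r-t$ and $r+t$ lie in $[R,\infty)$, you never actually evaluate $a',b'$ at negative arguments, so once you know $a',b'\in L^2([0,\infty))$ (which follows from Hardy's inequality for $\zeta_0'$ and directly for $\zeta_1$) the computation goes through for all $(v_0,v_1)\in\mathcal H(\RR^3)$ without the density reduction.
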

We omit the easy proof, which relies on the equation $(\partial_t^2-\partial_r^2)u(t,r)=0$.
We note that by integration by parts,
\begin{equation}
\label{IPP}
\int_R^{+\infty} \left( \partial_{t,r}(ru_0) \right)^2\,dr+Ru_0^2(R)=\int_R^{+\infty} \left( \partial_{t,r}(u_0) \right)^2r^2\,dr.
\end{equation} 
\begin{prop}
\label{prop:Z}
There exists $\mathfrak{z}>0$ and a radial, $C^{\infty}$ function $Z=Z(|x|)$ on $\{x\in \RR^3,\; |x|>\mathfrak{z}\}$ such that\begin{gather}
 \label{eqZ}
 \Delta Z=Z^5\text{ for }r>\mathfrak{z}\\
 \label{ZlargeR}
 \left|r Z'(r)+\frac 1r\right|+\left|Z(r)-\frac{1}{r}\right|\leq \frac{C}{r^3}\\
  \label{blowupZ}
  \lim_{r\to\zeta^+}|Z(r)|=+\infty\\
  \label{Zmonotonic}
  Z'(r)\neq 0\text{ for }r>\mathfrak{z}.
 \end{gather}
\end{prop}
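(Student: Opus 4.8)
The plan is to pass to the Emden--Fowler type reduction $\zeta(r):=rZ(r)$. Since $\Delta Z=r^{-1}\zeta''$ and $Z^5=r^{-5}\zeta^5$, equation \eqref{eqZ} is equivalent to the scalar ODE
\begin{equation*}
\zeta''(r)=\frac{\zeta(r)^{5}}{r^{4}},
\end{equation*}
and I will construct a solution on a maximal interval $(\mathfrak z,\infty)$ which is positive, strictly decreasing, and asymptotic to $1$ at $+\infty$; translating back via $Z=\zeta/r$ then produces all four required properties. Smoothness of $Z$ on $\{r>\mathfrak z\}$ will be automatic since the right-hand side of the ODE is smooth in $(r,\zeta)$ for $r>0$.

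\emph{Step 1: construction near infinity.} For $R_{0}$ large I would solve the ODE on $[R_{0},\infty)$ with the conditions at infinity $\zeta(+\infty)=1$, $\zeta'(+\infty)=0$ through the fixed point formulation
\begin{equation*}
\zeta(r)=1+\int_{r}^{+\infty}(\tau-r)\,\frac{\zeta(\tau)^{5}}{\tau^{4}}\,d\tau .
\end{equation*}
Using $\int_{r}^{\infty}(\tau-r)\tau^{-4}\,d\tau=\frac{1}{6}r^{-2}$, the right-hand side is a contraction on $\{\,\|\zeta-1\|_{L^{\infty}([R_{0},\infty))}\le 1/2\,\}$ once $R_{0}$ is large enough, giving a unique solution, which is $C^{\infty}$ by the equation and a bootstrap. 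From the fixed point identity one reads $|\zeta(r)-1|\lesssim r^{-2}$, and differentiating, $\zeta'(r)=-\int_{r}^{\infty}\tau^{-4}\zeta(\tau)^{5}\,d\tau$, hence $|\zeta'(r)|\lesssim r^{-3}$ together with $\zeta>0$, $\zeta'<0$ on $[R_{0},\infty)$. Since $Z-\frac{1}{r}=(\zeta-1)/r$ and $rZ'+\frac{1}{r}=\zeta'-(\zeta-1)/r$, these bounds give \eqref{ZlargeR}.

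\emph{Step 2: leftward continuation, monotonicity and blow-up.} I would then continue the ODE solution to smaller $r$; let $(\mathfrak z,\infty)$ with $\mathfrak z\ge 0$ be its maximal interval of existence. As long as $\zeta>0$ one has $\zeta''=r^{-4}\zeta^{5}>0$, so $\zeta'$ is nondecreasing in $r$ and stays $\le\zeta'(R_{0})<0$; hence $\zeta'<0$ throughout, so $\zeta$ is strictly decreasing in $r$ and stays $\ge\zeta(R_{0})>0$. Thus $\zeta>0$, $\zeta'<0$, $\zeta''>0$ on all of $(\mathfrak z,\infty)$, which yields $\zeta>0$ and $Z'(r)=(r\zeta'(r)-\zeta(r))/r^{2}<0$ for $r>\mathfrak z$, i.e.\ \eqref{Zmonotonic}, while \eqref{eqZ} holds by construction. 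It remains to see $\mathfrak z>0$ and $\lim_{r\to\mathfrak z^{+}}|Z(r)|=+\infty$. First $\zeta$ must be unbounded as $r\to\mathfrak z^{+}$: if it converged to a finite limit then $\zeta''\ge r^{-4}\zeta(R_{0})^{5}$ is bounded below by a positive multiple of $r^{-4}$, and integrating twice forces $\zeta\to+\infty$. Now suppose $\mathfrak z=0$: on all of $(0,R_{0}]$ we have $r^{-4}\ge R_{0}^{-4}=:c_{0}>0$, so multiplying $\zeta''=r^{-4}\zeta^{5}$ by the positive quantity $-\zeta'$ shows that $\frac{1}{2}(\zeta')^{2}-\frac{c_{0}}{6}\zeta^{6}$ is nonincreasing in $r$, whence $\frac{1}{2}(\zeta'(r))^{2}\ge \frac{c_{0}}{6}\zeta(r)^{6}+O(1)$; since $\zeta(r)\to\infty$ as $r\to 0^{+}$, for small $r$ this gives $-\zeta'(r)\gtrsim\zeta(r)^{3}$, hence $\frac{d}{dr}(\zeta^{-2})\gtrsim 1$ and, integrating, $\zeta(r)\lesssim r^{-1/2}$ as $r\to 0^{+}$. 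On the other hand, integrating $\zeta''\ge c_{0}\zeta(R_{0})^{5}r^{-4}$ twice from $r$ to $R_{0}$ gives $\zeta(r)\gtrsim r^{-2}$ as $r\to 0^{+}$; the two bounds are incompatible, so $\mathfrak z>0$. Since $\mathfrak z>0$ and $\zeta\to+\infty$, $Z=\zeta/r\to+\infty$ as $r\to\mathfrak z^{+}$, which is \eqref{blowupZ}.

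The main obstacle is the last point of Step 2: ensuring the leftward continuation actually terminates at a \emph{strictly positive} radius rather than merely becoming singular as $r\to 0^{+}$. This is why one must combine the conservation-type monotonicity of $\frac{1}{2}(\zeta')^{2}-\frac{c_{0}}{6}\zeta^{6}$ (which gives an upper rate of growth for $\zeta$ near a putative endpoint at $0$) with the elementary lower bound $\zeta(r)\gtrsim r^{-2}$ coming from $\zeta''\gtrsim r^{-4}$; everything else is routine ODE analysis and changes of variable.
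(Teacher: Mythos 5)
Your proposal takes a genuinely different route from the paper. The paper does not construct $Z$ at all: it quotes the existence of $\mathfrak z$ and $Z$ satisfying \eqref{eqZ}, \eqref{ZlargeR}, \eqref{blowupZ} from \cite[Proposition 4.1]{DuyckaertsYang18}, and only proves \eqref{Zmonotonic} in-house, by a soft argument: if $Z'(R)=0$ for some $R>\mathfrak z$, multiplying $\Delta Z=Z^5$ by $Z$ and integrating by parts on $\{|x|>R\}$ (the boundary term vanishes and the decay \eqref{ZlargeR} handles infinity) gives $\int_{\{|x|>R\}}|\nabla Z|^2+\int_{\{|x|>R\}}Z^6=0$, contradicting $Z\sim 1/r$. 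Your Emden--Fowler reduction $\zeta=rZ$, the fixed point at infinity (the kernel computation $\int_r^\infty(\tau-r)\tau^{-4}d\tau=\tfrac16 r^{-2}$ and the resulting bounds $|\zeta-1|\lesssim r^{-2}$, $|\zeta'|\lesssim r^{-3}$ are correct and do give \eqref{ZlargeR} for large $r$, which is the only sensible reading of that estimate since it is incompatible with \eqref{blowupZ} near $\mathfrak z$), and the leftward continuation reprove the quoted facts from scratch; you even get the stronger conclusion $Z'<0$ rather than merely $Z'\neq 0$. What the paper's route buys is brevity given the citation and an argument for \eqref{Zmonotonic} that does not use the one-dimensional structure; what yours buys is self-containedness.

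One justification in Step 2 needs repair, although the statement it supports is true and standard. You claim $\zeta$ is unbounded as $r\to\mathfrak z^+$ because $\zeta''\ge c\,r^{-4}$ and ``integrating twice forces $\zeta\to+\infty$''. That double integration diverges only when the left endpoint is $0$ (it is exactly how you later obtain $\zeta\gtrsim r^{-2}$ in the case $\mathfrak z=0$); when $\mathfrak z>0$ it produces a finite lower bound and no contradiction, so as written it does not establish the blow-up you invoke in the last sentence to get \eqref{blowupZ}. The fix is the usual maximality argument: if $\zeta$ stayed bounded as $r\to\mathfrak z^+$ with $\mathfrak z>0$, then $\zeta''=\zeta^5/r^4$ would be bounded on $(\mathfrak z,R_0]$, hence $\zeta'$ bounded, hence $(\zeta,\zeta')$ would extend continuously to $r=\mathfrak z$ and Cauchy--Lipschitz (the ODE is regular at $r=\mathfrak z>0$) would continue the solution to the left of $\mathfrak z$, contradicting maximality of $(\mathfrak z,\infty)$; since $\zeta$ is monotone, unboundedness then means $\zeta\to+\infty$ and $Z=\zeta/r\to+\infty$. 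With this one-line correction (and the energy-monotonicity computation for $\tfrac12(\zeta')^2-\tfrac{c_0}{6}\zeta^6$, which I checked has the right sign since $\zeta'<0$ and $r^{-4}\ge c_0$ on $(0,R_0]$), your argument ruling out $\mathfrak z=0$ via the incompatible bounds $\zeta\lesssim r^{-1/2}$ and $\zeta\gtrsim r^{-2}$ is sound, and the proof is complete.
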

\begin{proof}
The existence of $\mathfrak{z}$ and $Z$ satisfying \eqref{eqZ}, \eqref{ZlargeR} and \eqref{blowupZ} is proved in 
\cite[Proposition 4.1]{DuyckaertsYang18} and we omit it. 

To prove \eqref{Zmonotonic}, we argue by contradiction. Assume that $Z'(R)=0$ for some $R>\mathfrak{z}$. Multiplying equation \eqref{eqZ} by  $Z$, integrating by parts for $r>R$ and using the boundary condition $Z'(R)=0$, we obtain 
$$\int_{\{|x>R\}}|\nabla Z|^2\,dx+\int_{\{|x>R\}}|Z|^6\,dx=0.$$
This proves that $Z(r)=0$ for almost every $r>R$, contradicting \eqref{ZlargeR}. 
\end{proof}
\begin{remark}
 \label{rem:Z}
 Let $\ell\in \RR\setminus\{0\}$ and
 $$Z_{\ell}=\frac{1}{\ell}Z\left( \frac{r}{\ell^2}\right).$$
 Then \eqref{eqZ}, \eqref{blowupZ} and \eqref{Zmonotonic} hold with $Z$ replaced by $Z_{\ell}$ and $\mathfrak{z}$ by $\ell^2\mathfrak{z}$, and there exists a constant $C_{\ell}$ such that
\begin{equation}
   \label{Zell_largeR}
 \left|r Z'_{\ell}(r)+\frac{\ell}{r}\right|+\left|Z_{\ell}(r)-\frac{\ell}{r}\right|\leq \frac{C_{\ell}}{r^3}.
 \end{equation} 
\end{remark}

\subsection{Proof of Theorem \ref{thm:rigidity}}
\setcounter{step}{0}
\begin{step} \ednote{\color{pgreen} Changé les solutions $\mathbb R^3$: $\tilde u$ etc... pour $v$}
 \label{step:small}
 { Let $(u_0, u_1)\in\mathcal H(B^c)$ be as in Theorem \ref{thm:rigidity}.} Let $\eps>0$ be a small parameter to be specified. In all the proof we fix $R_{\eps}>1$ such that 
 \begin{equation}
  \label{defReps}
  \int_{R_{\eps}}^{+\infty}\left((\partial_ru_0)^2+u_1^2\right)r^2\,dr\leq \eps.
 \end{equation} 
 In this step, we prove
 \begin{equation}
  \label{R4}
  \forall R\geq R_{\eps},\quad
\int_R^{+\infty} \left( \partial_{r}(r u_0) \right)^2+r^2u_1^2\,dr\leq CR^5u_0^{10}(R).
 \end{equation} 
 Let $R\geq R_{\eps}$.
 We define the radial functions $v_0\in \dot{H}^1(\RR^3)$, $v_1\in L^2(\RR^3)$ as follows:
 \begin{equation}
 \label{tildeu}
 \begin{cases}
  (v_0,v_1)(r)=(u_0,u_1)(r)&\text{ if }r>R\\
  (v_0,v_1)(r)=(u_0(R),0)&\text{ if }r\in (0,R).
 \end{cases} 
 \end{equation}   \ednote{\color{pgreen}Quid de $u_1$ pour $|r|<1$ ?}
We let $v(t)=\mathscr{S}_{\RR^3}(t)(v_0,v_1)$ be the solution to the quintic wave equation on $\RR^3$ with initial data $(v_0,v_1)$, and $v_L(r)=S_{\RR^3}(v_0,v_1)$ be the corresponding solution to the free wave equation. We note that by final speed of propagation 
$$ v(t,r)=u(t,r),\quad r>R+|t|. $$ 
By the small data theory, since $\eps$ is small,
\begin{equation}
 \label{R2}
 \sup_{t\in \RR} \left\|\vec{v}(t)-\vec{v}_L(t)\right\|_{\dot{H}^1\times L^2}\leq C\left\|(v_0,v_1)\right\|^5_{\dot{H}^1\times L^2}.
\end{equation}
By Proposition \ref{prop:channels},
\begin{equation}
 \label{R3}
 \sum_{\pm}\lim_{t\to\pm \infty} \int_{R+|t|}^{+\infty} |\partial_{t,r}(rv_L(t,r))|^2\,dr
 =\int_R^{+\infty} \left( \partial_{r}(r u_0) \right)^2+u_1^2\,dr. 
\end{equation} 
By \eqref{R2}, and finite speed of propagation
$$\left|\int_{R+|t|}^{+\infty} \big|\partial_{t,r}(rv_L(t,r))-\partial_{t,r}(r u(t,r))\big|^2\,dr\right|\leq 
C\left(\int_R^{+\infty} \big((\partial_r u_0)^2+u_1^2\big)r^2 dr\right)^5.
$$
Combining with \eqref{R3} and using that by the compactness of the closure of $K$ in $\mathcal{H}(B^c)$
$$\lim_{t\to\pm \infty}\int_{R+|t|}^{+\infty} |\partial_{t,r}(r u(t,r))|^2\,dr=0,$$
we deduce
$$
\int_R^{+\infty} \left( \partial_{r}(r u_0) \right)^2+u_1^2\,dr\leq C\left(\int_R^{+\infty} \left(\partial_r u_0)^2+u_1^2\right)r^2 dr\right)^5.
$$
Combining with the integration by parts formula \eqref{IPP} and the smallness of $\eps$, we deduce \eqref{R4}.
\end{step}
\begin{step}
\label{step:limit}
 In this step we prove that there exists $\ell\in \RR$ and $C>0$ such that for large $R$,
 \begin{equation} 
  \label{R5}
  \left|u_0(r)-\frac{\ell}{r}\right|\leq \frac{C}{r^3},\quad \int_r^{+\infty}\rho^2u_1(\rho)\,d\rho\leq \frac{C}{r^5}.
 \end{equation} 
 First fix $R$ and $R'$ such that $R_{\eps}\leq R\leq R'\leq 2R$. Letting $\zeta_0(r)=ru_0(r)$\ednote{\color{pgreen}Changé $v_0$ pour $\zeta_0$ dans ce qui suit pour être coherent avec les sections precedentes. Le $\zeta$ associé a $Z$ a été change en $\mathfrak{z}$}, we have, using Cauchy-Schwarz, then Step \ref{step:small}
 \begin{equation}
  \label{R6}
  \left|\zeta_0(R)-\zeta_0(R')\right|\leq \int_{R}^{R'}|\partial_r\zeta_0(r)|\,dr\leq \sqrt{R}\sqrt{\int_R^{R'}(\partial_r \zeta_0)^2dr}\leq \frac{1}{R^2}\zeta_0^5(R).
 \end{equation}
 Since by the definition \eqref{defReps} of $R_{\eps}$ and the integration by parts formula \eqref{IPP} one has
 \begin{equation}
  \label{R7}
  \frac{1}{R}\zeta_0^2(R)\leq \eps,
 \end{equation} 
 we deduce from \eqref{R6}:
 \begin{equation}
  \label{R8}
  \left|\zeta_0(R)-\zeta_0(R')\right|\leq \eps^2 \zeta_0(R).
 \end{equation} 
 We apply this inequality between $2^kR$ and $2^{k+1}R$ for $k\in \mathbb{N}$ and a fixed $R\geq R_{\eps}$. This yields
 $$\left|\zeta_0\left( 2^{k+1}R\right)-\zeta_0\left( 2^kR\right)\right|\lesssim \eps^2\left|\zeta_0\left( 2^kR\right)\right|$$
 and thus
 $$\left|\zeta_0\left(2^{k+1}R\right)\right|\leq (1+C\eps^2)\left|\zeta_0\left( 2^kR \right)\right|.$$
 We deduce, by an easy induction:
 $$ \left|\zeta_0\left( 2^kR \right)\right|\leq (1+C\eps^2)^{k}|\zeta_0(R)|$$
 Combining with \eqref{R6} we obtain
 \begin{equation}
 \label{R9}
 \left|\zeta_0(2^kR)-\zeta_0(2^{k+1}R)\right|\lesssim \left( 1+C\eps^2 \right)^{5k}|\zeta_0(R)|^5\frac{1}{2^{2k}R^{2}}. 
 \end{equation}
 Chosing $\eps$ small, so that $(1+C\eps^2)^5<4$, we see that $\sum \left|\zeta_0(2^kR)-\zeta_0(2^{k+1}R)\right|$ converges, and thus that $\zeta_0(2^kR)$ has a limit $\ell(R)$ as $k\to \infty$. Summing \eqref{R9} over all $k\geq k_0$, we obtain
 \begin{equation}
  \label{R10}
  \left|\zeta_0(2^{k_0}R)-\ell(R)\right|\lesssim \frac{1}{R^{2}}\frac{1}{(1+c_{\eps})^{k_0}}[\zeta_0(R)|^5,
 \end{equation} 
 for some constant $c_{\eps}>0$. Combining with \eqref{R6}, we see that 
 $$\lim_{r\to\infty}\zeta_0(r)=\ell(R),$$
 and in particular the limit $\ell(R)$ does not depend on $R$. We will simply denote it by $\ell$.  By \eqref{R10} at $k_0=1$, since $\zeta_0$ is bounded
 \begin{equation}
  \label{R11}
  \left|\zeta_0(R)-\ell\right|\lesssim_{\zeta_0} \frac{1}{R^2}, 
 \end{equation} 
 which yields the first inequality in \eqref{R5}.Combining with step 1, we obtain the second inequality in \eqref{R5}.
\end{step}
\begin{step}
\label{step:ell0}
 In this step, we assume $\ell=0$ and prove that $(u_0,u_1)\equiv (0,0)$. Indeed by \eqref{R8}, if $R\geq R_{\eps}$ and $k\in \mathbb{N}$, 
 $$ \left|\zeta_0\left(2^{k+1}R\right)\right| \geq (1-C\eps^2)\left|\zeta_0\left( 2^kR \right)\right|.$$
 Hence by induction on $k$,
 $$ \left|\zeta_0\left( 2^kR \right)\right| \geq (1-C\eps^2)^{k} |\zeta_0(R)|.$$
 Since by the preceding step and the assumption $R=0$, $|\zeta_0(2^kR)|\lesssim 1/{2^kR}^2$, we  deduce, chosing $\eps$ small enough and letting $k\to\infty$ that $\zeta_0(R)=0$. Combining with \eqref{R4} we deduce
 \begin{equation*}
  R\geq R_{\eps}\Longrightarrow \int_{R}^{+\infty} (\partial_r \zeta_0)^2+u_1^2(r)\,dr=0,
 \end{equation*} 
 that is $u_0(r)$ and $u_1(r)$ are $0$ for almost every $r\geq R_{\eps}$. Going back to the definition of $R_{\eps}$ we see that we can choose any $R_{\eps}>1$, which concludes this step.
\end{step}
\begin{step}
\label{step:Zell}
We next assume $\ell \neq 0$. Let $Z_{\ell}$ be as in Remark \ref{rem:Z}. In this step we prove that $(u_0-Z_{\ell},u_1)$ has  a bounded support. Let $f=u-Z_{\ell}$. Then
\begin{equation}
 \label{eqf}
 \left\{
 \begin{aligned}
\partial_t^2 f-\Delta f&=D_{\ell}(f):= \sum_{k=1}^5 \binom{5}{k}Z_{\ell}^{5-k}f^k.\\
\vec{f}_{\restriction t=0}&=(f_0,f_1):=\left( u_0-Z_{\ell},u_1 \right),
 \end{aligned}
\right.
\end{equation}
For $\eps>0$ small, we fix $R_{\eps}'\gg 1$ such that 
\begin{gather} 
 \label{defReps1}
 \int_{R_{\eps}'}^{+\infty} \left(|\partial_rf_0(r)|^2+|f_1(r)|^2\right)r^2dr\leq \eps^2\\
 \label{defReps2}
 \int_{\RR} \left( \int_{R_{\eps}'+|t|}^{+\infty} Z_{\ell}^{10}(r)r^2\,dr \right)^{\frac 12}dt\leq \eps^{5}.
\end{gather}
Let $f_{L}$ be the solution of $\partial_t^2f_L=\Delta f_L$ with 
$$\vec{f}_{L\restriction t=0}=(\tilde{f}_0,\tilde{f}_1),$$
where $(\tilde{f}_0,\tilde{f}_1)$ coincides with $(f_0,f_1)$ for $r>R_{\eps}'$ and is defined as in \eqref{tildeu}. Using \eqref{eqf} and the assumptions \eqref{defReps1} and \eqref{defReps2} on $R_{\eps}'$, we obtain 
\begin{equation}
\label{f_tf}
\sup_{t\in \RR} \left\|\indic_{\{|x|>|t|+R_{\eps}'\}}\big|\nabla_{t,x}(\tilde{f}(t)-\tilde{f}_L(t))\big|\,\right\|_{L^2}\lesssim \eps^4\Big\|(\tilde{f}_0,\tilde{f}_1)\Big\|_{\dot{H}^1\times L^2}.
\end{equation}
Let $R\geq R_{\eps}'$.
Using that by Proposition \ref{prop:channels},
$$\sum_{\pm}\lim_{t\to\pm\infty}\int_{R}^{+\infty}\left( \partial_{t,r}\big(r\tilde{f}_L(t,r)\big) \right)^2dr\gtrsim \int_{R}^{+\infty}
\left(\left( \partial_r(r\tilde{f}) \right)^2+r^2\tilde{f}_1^2\right)dr,
$$
and since 
$$\sum_{\pm}\lim_{t\to\pm\infty}\int_{R}^{+\infty}\left( \partial_{t,r}\big(r\tilde{f}(t,r)\big) \right)^2dr=0,$$
we deduce from \eqref{f_tf}
$$\eps^{8}\int_R^{+\infty}\left( (\partial_rf_0)^2+f_1^2 \right)r^2dr\gtrsim\int_{R}^{+\infty}\left(\big(\partial_r(rf_0)\big)^2+r^2f_1^2\right)dr,$$
and thus
\begin{equation}
\label{boundf0}
\eps^8Rf_0^2(R)\gtrsim \int_R^{\infty}\left(\big(\partial_r(rf_0)\big)^2+r^2f_1^2\right)dr. 
\end{equation} 
Letting $g_0=rf_0$, we deduce by Cauchy-Schwarz that for $R\geq R_{\eps}'$, $k\in \NN$,
$$\left|g_0\left(2^{k+1}R\right)-g_0\left(2^{k}R\right)\right|\lesssim \int_{2^kR}^{2^{k+1}R} |\partial_r g_0|dr\lesssim \eps^4\left|g_0(2^kR)\right|.$$
This yields by an easy induction $|g_0(2^{k}R)|\geq \left(1-C\eps^4\right)^k|g_0(R)|$, where $C>0$ is a constant which is independent of $\eps$. Since by Step \ref{step:limit},
$$\frac{C}{\left( 2^kR\right)^2}\geq \left|g_0\left( 2^kR \right)\right|,$$
we obtain choosing $\eps$ small enough that $g_0(R)=0$ for large $R$. Combining with \eqref{boundf0}, we deduce that $(f_0(r),f_1(r))=0$ a.e.\,for large $R$, concluding this step.
\end{step}
\begin{step}
\label{step:contradiction}
 In this step we still assume $\ell\neq 0$ and deduce a contradiction. We let
 $$ \rho=\inf\left\{ R>c\;:\; \int_{R}^{+\infty}\left((\partial_r f_0)^2+f_1^2\right)r^2dr=0\right\}$$
 and prove that $\rho= \max(1,\mathfrak{z} \ell^2)$, i.e.\,that $u_0(r)=Z_{\ell}(r)$ almost everywhere for $r>\max(1,\mathfrak{z}\ell^2)$. If $\mathfrak{z}\ell^2 \geq 1$, we deduce
 $$\lim_{r\to \mathfrak{z}\ell^2}|u_0(r)|=+\infty,$$
 a contradiction with the radial Sobolev embedding theorem. If $\mathfrak{z}\ell^2\leq 1$, we obtain
 $$u_0(r)=Z_{\ell}(r)$$
 for all $r>1$. Translating the solution in time, the same proof yields that for all $t$ in the domain of definition of $u$,
 \begin{equation}
 \label{u_stat}
 u(t,r)=Z_{\ell}(r),  
 \end{equation} 
 a contradiction with the Neumann boundary condition, as given by Lemma \ref{lem:interpr_neumann}. Note that by finite speed of propagation, the limit $\ell$ in \eqref{u_stat} is independent of $t$.
 
 To prove that $\rho=\max(1,\mathfrak{z}\ell^2)$, we argue by contradiction, assuming $\rho>\max(1,\mathfrak{z} \ell^2)$. By the preceding step and finite speed of propagation, the essential support of $f$ is included in $\{r\leq \rho+|t|\}$. Thus $f$ is solution of 
\begin{equation*}
 \left\{
 \begin{aligned}
\partial_t^2 f-\Delta f&=\indic_{\{|x|\leq \rho+|t|\}}D_{\ell}(f).\\
\vec{f}_{\restriction t=0}&=(f_0,f_1):=\left( u_0-Z_{\ell},u_1 \right),
 \end{aligned}
\right.
\end{equation*} 
Fix $R_{\eps}''\in (1,\rho)$ such that, 
\begin{gather*} 
 \int_{R_{\eps}''}^{+\infty} \left(|\partial_rf_0(r)|^2+|f_1(r)|^2\right)r^2dr\leq \eps^2\\
 \int_{\RR} \left( \int_{R_{\eps}''+|t|}^{\rho+|t|} Z_{\ell}^{10}(r)r^2\,dr \right)^{\frac 12}dt\leq \eps^{5}.
\end{gather*}
The same argument as in the preceding step, replacing $R_{\eps}'$ by $R_{\eps}''$, yields that $(f_0,f_1)=0$ for almost every $r>R_{\eps}''$, which contradicts the definition of $\rho$. The proof is complete.
\end{step}

\qed

We are now in position to conclude
\begin{proof}[Proof of Theorem \ref{thm:main}]
By contradiction, assume that $E_c$, as defined by (\ref{eq:Ec}), is finite. Then Theorem \ref{th:conc-comp} shows that there exists a solution $\vec{u}_c$
to (\ref{NLWrad})-(\ref{NBDrad})-(\ref{IDrad}) such that $\big\{ \vec{u}_c(t), \ t\in\mathbb R \big\}$ has a compact closure in $\mathcal H{ (B^c)}$, but by Theorem \ref{thm:rigidity}, such a solution cannot exist. Thus
$E_c = + \infty$, and by Proposition \ref{prop:perturb_scat}, all the solutions of (\ref{NLWrad})-(\ref{NBDrad})-(\ref{IDrad}) scatter.
\end{proof}

\section{Focusing case}
\label{section:focusing}
In this section we sketch the proofs of Theorems \ref{thm_foc1} and \ref{thm_foc2}. Subsection \ref{subs:trapping} is dedicated to the proof of a trapping property for solutions below the energy of the $\RR^3$ ground state $W$ that is important in the proof of both of these results. Subsection \ref{subs:foc1} concerns Theorem \ref{thm_foc1} and Subsection \ref{subs:foc2} Theorem \ref{thm_foc2}. Finally, in Subsection \ref{subs:geom}, we comment on the assumptions of these two theorems, and prove that the exact analog of Theorem \ref{thm_foc1} is not true when $\RR^3\setminus B(0,1)$ is replaced by a more general domain. 
\subsection{Trapping by the energy}
\label{subs:trapping}
Recall that
$$W(x)=\frac{1}{\left( 1+\frac{|x|^2}{3}\right)^{\frac{1}{2}}}$$
is the ground state of the focusing critical wave equation on $\RR^3$. If $(f,g)\in \mathcal H (\RR^3)$, we denote by 
$$\mathscr{E}_{\RR^3}(f,g)=\frac{1}{2}\int_{\RR^3} |\nabla f|^2+\frac{1}{2}\int_{\RR^3} |g|^2-\frac{1}{6}\int_{\RR^3}|f|^6.$$
\begin{prop}
 \label{prop:trapping}
 Let $u$ be a solution of \eqref{NLWrad_foc} with Neumann boundary condition \eqref{NBDrad} and initial data \eqref{IDrad}. Let $I$ be its maximal interval of existence.
 Assume $\mathscr{E}(u_0,u_1)<\mathscr{E}_{\RR^3}(W,0)$. Then the sign of $\int_{  B^c} |\nabla u(t)|^2-\int_{\RR^3} |\nabla W|^2$ is independent of $t\in I$, and there exists $\delta>0$ depending only on $\mathscr{E}(u_0,u_1)$ such that 
 \begin{equation}
 \label{trapping1}
 \forall t\in I,\quad \left|\int_{ B^c} |\nabla u(t,x)|^2-\int_{\RR^3} |\nabla W(x)|^2dx\right|\geq \delta.
\end{equation} 
\end{prop}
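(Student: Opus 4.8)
The plan is to reduce the statement to the one-dimensional variational analysis underlying the Kenig--Merle threshold, transporting the problem to $\RR^3$ via the extension operator $\mathcal{P}$ of Definition \ref{def:prolong}. Set $y(t):=\int_{B^c}|\nabla u(t,x)|^2\,dx$; by the local well-posedness theory $u\in C(I,\dot H^1(B^c))$, so $y$ is continuous on $I$. The goal is to show that the conserved energy $\mathscr{E}(u_0,u_1)$ bounds $y(t)$ below by a fixed unimodal function of $y$ whose maximal value is precisely $\mathscr{E}_{\RR^3}(W,0)$, attained at $y=\int_{\RR^3}|\nabla W|^2$; the conclusion then follows from a connectedness argument.

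First I would record two elementary properties of $\mathcal{P}$: since $\mathcal{P}u(t)$ is constant on $B$, one has $\int_{\RR^3}|\nabla \mathcal{P}u(t)|^2=\int_{B^c}|\nabla u(t)|^2=y(t)$, and since $\mathcal{P}u(t)$ coincides with $u(t)$ on $B^c$, $\int_{\RR^3}|\mathcal{P}u(t)|^6\ge\int_{B^c}|u(t)|^6$. Let $C_3$ be the sharp constant in $\|v\|_{L^6(\RR^3)}\le C_3\|\nabla v\|_{L^2(\RR^3)}$, attained by $W$; using $\int_{\RR^3}|\nabla W|^2=\int_{\RR^3}W^6$ this gives $C_3^6\big(\int_{\RR^3}|\nabla W|^2\big)^2=1$. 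Then, by conservation of energy, for every $t\in I$,
\begin{equation*}
\mathscr{E}(u_0,u_1)=\mathscr{E}(\vec u(t))\ \ge\ \frac12\,y(t)-\frac16\int_{\RR^3}|\mathcal{P}u(t)|^6\ \ge\ \frac12\,y(t)-\frac16\,C_3^6\,y(t)^3\ =:\ f\big(y(t)\big).
\end{equation*}
On $[0,\infty)$ the function $f(y)=\frac12 y-\frac16 C_3^6 y^3$ is strictly increasing on $[0,y_*]$ and strictly decreasing on $[y_*,\infty)$, where $y_*:=C_3^{-3}=\int_{\RR^3}|\nabla W|^2$, with $f(y_*)=\frac13\int_{\RR^3}|\nabla W|^2=\mathscr{E}_{\RR^3}(W,0)$.

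Since $f(y(t))\le\mathscr{E}(u_0,u_1)<\mathscr{E}_{\RR^3}(W,0)=\max_{[0,\infty)}f$, the quantity $y(t)$ lies for every $t$ in the sublevel set $\{y\ge0:\ f(y)\le\mathscr{E}(u_0,u_1)\}$, which is a disjoint union $[0,y_-]\cup[y_+,\infty)$ with $0\le y_-<y_*<y_+$ (the piece $[0,y_-]$ being empty when $\mathscr{E}(u_0,u_1)<0=f(0)$); here $y_\pm$ are the roots of $f(y)=\mathscr{E}(u_0,u_1)$ and depend only on $\mathscr{E}(u_0,u_1)$. Because $y(\cdot)$ is continuous and $I$ is an interval, $y(I)$ is connected and therefore contained in exactly one of these two pieces; this yields that the sign of $y(t)-\int_{\RR^3}|\nabla W|^2$ is independent of $t\in I$. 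Setting $\delta:=\min(y_*-y_-,\,y_+-y_*)>0$ (equal to $y_+-y_*$ when the first piece is empty), we obtain $\big|\int_{B^c}|\nabla u(t,x)|^2-\int_{\RR^3}|\nabla W(x)|^2dx\big|\ge\delta$ for all $t\in I$, with $\delta$ depending only on $\mathscr{E}(u_0,u_1)$.

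There is no genuine obstacle here: this is the exact analogue of the argument in \cite{KeMe06, KeMe08}, and the only domain-specific ingredient is the passage through $\mathcal{P}$, which is harmless precisely because $\mathcal{P}$ preserves the $\dot H^1$-seminorm while only increasing the $L^6$-norm, so the obstacle can never push a sub-threshold solution up to the ground-state level. The mildly delicate points are the bookkeeping in the case $\mathscr{E}(u_0,u_1)\le0$ and the degenerate possibility $y(t)=0$ (which forces $u\equiv0$ via the radial Sobolev embedding), both of which are consistent with the stated dichotomy.
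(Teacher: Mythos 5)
Your proof is correct and follows essentially the same route as the paper: the key domain-specific step is identical (extend by $\mathcal{P}$, which preserves the $\dot H^1$-seminorm and only increases the $L^6$-norm, so the extended data stays below the $\RR^3$ threshold), after which the paper simply cites the variational trapping lemma of Kenig--Merle (\cite[Lemma 3.4]{KeMe06}) whereas you write out its proof explicitly via the sharp Sobolev constant, the unimodal function $f$, and a continuity/connectedness argument.
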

\begin{proof}
For $(f,g)\in \mathcal H$, we denote { by} $(\tilde f, \bar g) := \vec{\mathcal P}(f,g)$, the extension of
$(f,g)$ to $\mathcal H (\mathbb R ^3)$ by $(f(1), 0)$, as defined in definition \ref{def:prolong}. Observe that
$(\tilde f, \bar g)$ verifies
$$\int_{\RR^3}\big|\nabla \tilde{f}\big|^2=\int_{ B^c}|\nabla f|^2\text{ and }\int_{\RR^3} \tilde{f}^6\geq \int_{ B^c}f^6,$$
$$\int_{\RR^3} |\bar{g}|^2=\int_{ B^c}|g|^2,$$
and
\begin{equation}
\label{lower_energy}
\mathscr{E}_{\RR^3}\left( \tilde{f},\bar{g} \right)\leq \mathscr{E}\left( f,g \right). 
\end{equation} 
Let $u$ satisfy the assumptions of Proposition \ref{prop:trapping}. Then by conservation of the energy and \eqref{lower_energy},
$$ \forall t\in I,\quad \mathscr{E}_{\RR^3}\left( \tilde{u}(t),\overline{\partial_t u}(t) \right)\leq \mathscr{E}(u_0,u_1)<\mathscr{E}_{\RR^3}(W,0).$$
The conclusion of the proposition then follows from the variational properties of the ground-state $W$ on $\RR^3$, see e.g.\,\cite[Lemma 3.4]{KeMe06}.
\end{proof}

\subsection{Scattering}
\label{subs:foc1}
{
Note that by Proposition \ref{prop:trapping} and the radial Sobolev inequality (see Remark \ref{R:global}), any solution of (\ref{NLWrad_foc})-(\ref{NBDrad})-(\ref{IDrad}) that satisfies
$\mathscr{E}(u_0,u_1)<\mathscr{E}_{\RR^3}(W,0)$, $\int_{ B^c}|\nabla u_0|^2<\int_{\RR^3}|\nabla W|^2$
is global.
}

Using Proposition \ref{prop:trapping}, the proof of Theorem \ref{thm_foc1} follows exactly the same lines as the proof of Theorem \ref{thm:main}. 

Recall that according to \cite{KeMe08}, any solution of the quintic focusing wave equation on $\RR^3$ with initial data $(v_0,v_1)\in (\dot{H}^1\times L^2)\left( \RR^3 \right)$ such that 
$$ \int_{\RR^3}|\nabla v_0|^2<\int_{\RR^3}|\nabla W|^2\text{ and }\mathscr{E}_{\RR^3}(v_0,v_1)<\mathscr{E}_{\RR^3}(W,0)$$
scatters to a linear solution.

Arguing by contradiction and using the arguments of Sections \ref{section:dilating}, \ref{section:profiles},  and \ref{section:critical}, we see that it is sufficient to prove:
\begin{thm}
\label{thm:rigidity_foc}
 Let $(u_0,u_1)\in \mathcal{H}(B^c)$, radial, and $u(t)$ be a solution of the energy critical focusing wave equation outside the unit ball with Neumann boundary conditions 
(\ref{NLWrad_foc})-(\ref{NBDrad})-(\ref{IDrad}). Assume that $u$ is global and that 
 $$ K=\Big\{\vec{u}(t),\; t\in \mathbb{R}\Big\}$$
has compact closure in $\mathcal{H}(B^c)$. Then $u\equiv 0$.
 \end{thm}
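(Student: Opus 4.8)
The plan is to mimic the defocusing rigidity argument (Theorem~\ref{thm:rigidity}), replacing the exterior stationary profile $Z_\ell$ with the one built from the focusing static equation $\Delta Z = -Z^5$ for large $r$, and then derive a contradiction either from the radial Sobolev embedding (if the singularity of the profile lies outside the ball) or from the Neumann boundary condition (if it lies inside). First I would fix $\eps>0$ small and choose $R_\eps>1$ so that $\int_{R_\eps}^\infty\big((\partial_r u_0)^2+u_1^2\big)r^2\,dr\le\eps$; this is possible since $(u_0,u_1)\in\mathcal H(B^c)$. As in Step~\ref{step:small}, for $R\ge R_\eps$ I extend $(u_0,u_1)\restriction\{r>R\}$ to $(v_0,v_1)\in\mathcal H(\RR^3)$ by the constant $(u_0(R),0)$ for $r<R$, run the \emph{focusing} quintic flow $\mathscr S_{\RR^3}$ and its linearization $S_{\RR^3}$ on these data, invoke the small-data theory (valid for the focusing equation too, with the same Strichartz estimates), use finite speed of propagation to identify $v$ with $u$ on $\{r>R+|t|\}$, apply Proposition~\ref{prop:channels} for the linear $\RR^3$-flow, and use the compactness of $\overline K$ to kill the exterior energy of $u$ as $t\to\pm\infty$. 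The sign of the nonlinearity does not affect the channels-of-energy computation, so I would obtain exactly
\begin{equation*}
\forall R\ge R_\eps,\qquad \int_R^{+\infty}\big(\partial_r(ru_0)\big)^2+r^2u_1^2\,dr\le C\,R^5u_0^{10}(R).
\end{equation*}

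Next I would reproduce Step~\ref{step:limit} verbatim: writing $\zeta_0(r)=ru_0(r)$, the bound above together with Cauchy--Schwarz on dyadic annuli gives $|\zeta_0(2^{k+1}R)-\zeta_0(2^kR)|\lesssim\eps^2|\zeta_0(2^kR)|$, hence $\zeta_0(r)$ has a limit $\ell\in\RR$ as $r\to\infty$ with $|u_0(r)-\ell/r|\le C/r^3$ and $\big|\int_r^\infty\rho^2u_1(\rho)\,d\rho\big|\le C/r^5$. If $\ell=0$, the \emph{lower} bound $|\zeta_0(2^{k+1}R)|\ge(1-C\eps^2)|\zeta_0(2^kR)|$ combined with $|\zeta_0(2^kR)|\lesssim(2^kR)^{-2}$ forces $\zeta_0(R)=0$ for all $R\ge R_\eps$, and then the channels bound gives $u_1=0$ a.e.\ on $\{r\ge R_\eps\}$ as well; since $R_\eps$ can be taken arbitrarily close to $1$, one gets $(u_0,u_1)\equiv(0,0)$, i.e.\ $u\equiv0$. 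If $\ell\ne0$, I would use the \emph{focusing} analogue of Proposition~\ref{prop:Z}: there exist $\mathfrak z>0$ and a radial $C^\infty$ function $Z$ on $\{|x|>\mathfrak z\}$ with $\Delta Z=-Z^5$, $|rZ'(r)+1/r|+|Z(r)-1/r|\le C/r^3$, $|Z(r)|\to\infty$ as $r\to\mathfrak z^+$, and $Z'(r)\ne0$ for $r>\mathfrak z$ (the existence with the asymptotic expansion is again in \cite[Proposition~4.1]{DuyckaertsYang18}, and the monotonicity follows by the same Pohozaev-type argument as in Proposition~\ref{prop:Z}, now using $\int|\nabla Z|^2=\int Z^6$). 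Setting $Z_\ell=\frac1\ell Z(\cdot/\ell^2)$ so that $Z_\ell(r)\sim\ell/r$ matches the asymptotics of $u_0$, I put $f=u-Z_\ell$, which solves $\partial_t^2f-\Delta f=D_\ell(f)$ with $D_\ell(f)=-\sum_{k=1}^5\binom5k Z_\ell^{5-k}f^k$, and repeat Steps~\ref{step:Zell}--\ref{step:contradiction}: the $L^1L^2$-integrability in $t$ of $Z_\ell^5$ near the light cone exterior (which holds because $Z_\ell(r)\sim\ell/r$) lets me treat $D_\ell(f)$ perturbatively, the channels-of-energy estimate applied to $f$ together with compactness of $\overline K$ forces $(f_0,f_1)$ to have bounded support, and a final dyadic lower-bound argument shows the support infimum $\rho$ equals $\max(1,\mathfrak z\ell^2)$.

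Finally I would extract the contradiction exactly as in Step~\ref{step:contradiction}: if $\mathfrak z\ell^2\ge1$ then $|u_0(r)|\to\infty$ as $r\to(\mathfrak z\ell^2)^+$, contradicting the radial Sobolev embedding on $B^c$; if $\mathfrak z\ell^2\le1$ then $u_0(r)=Z_\ell(r)$ for all $r>1$, and translating in time gives $u(t,r)=Z_\ell(r)$ for all $t$, which contradicts the Neumann boundary condition $\partial_r u(1,t)=0$ (valid for $t\ne0$ by Lemma~\ref{lem:interpr_neumann}) since $Z_\ell'(r)\ne0$ for $r>\mathfrak z$, in particular at $r=1$. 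I expect the main obstacle to be purely bookkeeping rather than conceptual: one must check that the focusing static profile $Z$ has \emph{exactly} the same asymptotic expansion and monotonicity as its defocusing counterpart (the sign change in $\Delta Z=\mp Z^5$ only affects lower-order terms in the expansion, so this is essentially the same ODE analysis), and that the small-data and perturbation machinery of Sections~\ref{section:dilating}--\ref{section:critical} goes through with $+u^5$ replaced by $-u^5$ under the trapping hypothesis $\int_{B^c}|\nabla u_0|^2<\int_{\RR^3}|\nabla W|^2$ supplied by Proposition~\ref{prop:trapping}, which is precisely what confines the compact-flow solution to the sub-threshold regime where the nonlinearity remains defocusing-like. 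Apart from these verifications, the argument is identical to that of Theorem~\ref{thm:rigidity}.
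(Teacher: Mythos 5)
Your overall strategy is exactly the paper's: run Steps 1--3 of the defocusing rigidity proof unchanged (channels of energy for the linear $\RR^3$ flow, small-data theory, compactness of $\overline K$, dyadic argument giving $u_0(r)=\ell/r+O(r^{-3})$), then in the case $\ell\neq 0$ compare $u$ with a stationary solution of the \emph{focusing} static equation with the same $\ell/r$ asymptotics and conclude from the Neumann boundary condition via Lemma \ref{lem:interpr_neumann}. However, there is a genuine gap in your construction of that stationary profile. You posit a ``focusing analogue of Proposition \ref{prop:Z}'': a radial solution $Z$ of $\Delta Z=-Z^5$ on $\{r>\mathfrak z\}$ with $Z(r)=1/r+O(r^{-3})$ and $|Z(r)|\to\infty$ as $r\to\mathfrak z^+$, citing \cite[Proposition 4.1]{DuyckaertsYang18}. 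That reference concerns the \emph{defocusing} equation $\Delta Z=Z^5$, and for the focusing sign no such singular exterior solution exists: writing $\zeta=rZ$, the condition $\zeta\to\ell$ at infinity forces the integral equation $\zeta(r)=\ell-\int_r^\infty\int_s^\infty \zeta^5\sigma^{-4}\,d\sigma\,ds$, whose solution is unique near infinity and whose inward continuation is precisely the globally defined, smooth rescaled ground state $W_{\ell}(x)=\frac{\sqrt 3}{\ell}W(3x/\ell^2)$ used in the paper. So your dichotomy ``$\mathfrak z\ell^2\geq 1$ (radial Sobolev) versus $\mathfrak z\ell^2\leq 1$ (Neumann)'' collapses: the first branch never occurs, and the whole contradiction must come from the boundary condition. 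Relatedly, your proposed proof of $Z'(r)\neq 0$ by multiplying by $Z$ and integrating does not work in the focusing sign: it only yields $\int_{r>R}|\nabla Z|^2=\int_{r>R}Z^6$, which is not a contradiction; the paper instead simply reads off $\partial_r W_\ell(1)\neq 0$ from the explicit formula \eqref{defWell}.

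The repair is exactly the paper's proof: replace $Z_\ell$ by $W_\ell$ in Steps \ref{step:Zell} and \ref{step:contradiction}, replace $\max(1,\mathfrak z\ell^2)$ by $1$, and conclude that $u(t,r)=W_\ell(r)$ for $r>1$, contradicting $\partial_r u(1,t)=0$ since $W_\ell'(1)\neq 0$. (The decay $W_\ell\sim\ell/r$ makes the smallness condition analogous to \eqref{defReps2} hold, so the perturbative channels argument goes through verbatim.) One further small point: the rigidity statement requires no sub-threshold or trapping hypothesis, so your closing appeal to Proposition \ref{prop:trapping} to ``confine the compact-flow solution to the defocusing-like regime'' is not needed here (it belongs to the concentration-compactness part of Theorem \ref{thm_foc1}, not to Theorem \ref{thm:rigidity_foc}); the paper explicitly notes that these assumptions play no role in the rigidity theorem.
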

Note that it would be sufficient to prove Theorem \ref{thm:rigidity_foc} with the additional assumptions $\mathscr{E}(u_0,u_1)<\mathscr{E}_{\RR^3}(W,0)$, $\int_{ B^c}|\nabla u_0|^2<\int_{\RR^3}|\nabla W|^2$, but these assumptions are not needed to obtain the conclusion of the theorem.

The proof of Theorem \ref{thm:rigidity_foc} is the same as the proof of the Theorem \ref{thm:rigidity}  in Section \ref{section:rigidity}, except that in Steps \ref{step:Zell} and \ref{step:contradiction} the solution $Z_{\ell}$ of the elliptic equation $\Delta Z_{\ell} =Z_{\ell}^5$ must be replaced by the solution $W_{\ell}$ of the elliptic equation $-\Delta W_{\ell}=W_{\ell}^5$, where
\begin{equation}
\label{defWell}
W_{\ell}(x)=\frac{\sqrt{3}}{\ell}W\left( \frac{3x}{\ell^2} \right)=\frac{\sqrt 3}{\ell\left( 1+\frac{3|x|^2}{\ell^4} \right)^{1/2}}, 
\end{equation} 
so that 
$$ \left|W_{\ell}(x)-\frac{\ell}{|x|}\right|\lesssim \frac{1}{|x|^3},\quad |x|\gg 1.$$
Also, since $W_{\ell}(x)$ is defined for all $x\in \RR^3$, whereas $Z_{\ell}(x)$ is only defined for $r>\mathfrak{z}\ell^2$, we must replace $\max(1,\mathfrak{z} \ell^2)$ everywhere in these two steps of the proof by $1$. The key point to obtain the contradiction is that $\partial_r W_{\ell}(1)\neq 0$ for any $\ell\neq 0$, i.e. that $W_{\ell}$ is not a stationary solution of the focusing wave equation on $ B^c$ with Neumann boundary condition, which can be easily checked on the explicit formula \eqref{defWell} .
\subsection{Blow-up}
\label{subs:foc2}
Using Proposition \ref{prop:trapping}, the proof of Theorem \ref{thm_foc2} is very close to the proof of its analog on the whole space $\RR^3$, see Theorem 3.7 and the proof of Theorem 1.1, (ii) in section 7 of \cite{KeMe08}. Let us mention that this argument is inspired by the work of H.A.~Levine \cite{Levine74}.

Let us first assume that $u_0\in H^1( B^c)=\dot{H}^1( B^c)\cap L^2( B^c)$. Using the equation satisfied by $u$, one sees that $u(t)\in L^2( B^c)$ for all $t$ and, denoting $y(t)=\int_{ B^c}u^2(t,x)dx$, that
$$ y'(t)=2\int_{ B^c} u\partial_tu,\quad y''(t)=2\int_{ B^c} u^6-2\int_{ B^c}|\nabla u|^2+\int_{ B^c} (\partial_tu)^2.$$
Note that we have used the boundary condition $\partial_n u_{\restriction\partial  B^c}=0$ which implies $\int_{ B^c}u\Delta u=-\int_{ B^c} |\nabla u|^2$.

Recall that $\mathscr{E}_{\RR^3}(W,0)=\frac{1}{3}\int_{\RR^3}|\nabla W|^2$.
As in the proof of Theorem 3.7 of \cite{KeMe08}, one can write, for $t$ in the domain of existence of $u$,
\begin{align*}
y''(t)&=-12\mathscr{E}(u_0,u_1)+4\int_{ B^c}|\nabla u|^2+8\int_{ B^c} (\partial_tu)^2\\
&=8\int_{ B^c} (\partial_tu)^2+4\int_{ B^c} |\nabla u|^2-4\int_{\RR^3}|\nabla W|^2+12\mathscr{E}_{\RR^3}(W,0)-12\mathscr{E}(u_0,u_1)\\
&\geq 8\int_{ B^c}(\partial_tu)^2+\delta_0,
\end{align*}
where $\delta_0= 12\mathscr{E}_{\RR^3}(W,0)-12\mathscr{E}(u_0,u_1) > 0$ and we have used that by Proposition \ref{prop:trapping}, $\int_{ B^c}|\nabla u(t)|^2>\int_{\RR^3}|\nabla W|^2$ for all $t$.

The end of the proof that $u$ blows up in finite time is exactly as the end of the proof of Theorem 3.7, p.165 of \cite{KeMe08} and we omit it.

To treat the general case $u_0\in \dot{H}^1( B^c)$ one should use a localized version of $\int_{ B^c} u^2(t)$. These bring out new terms in the preceding computation, that can be controled using finite speed of propagation. We refer to \cite[p.205-206]{KeMe08} for the details.

\subsection{Comments on the assumptions}\label{subs:geom}
Consider the nonlinear focusing wave equation \eqref{NLW} with $\iota=-1$, and Neumann boundary condition \eqref{NBD} in a general open domain $\Omega$ of $\RR^3$. We claim that the analog of Theorem \ref{thm_foc1} does not hold in general. Indeed, first consider the case of a half-plane:
$$\Omega=\left\{ (x_1,x_2,x_3)\in \RR^2,\; x_1>0\right\}.$$
Let $w$ be the restriction of $W$ to $\Omega$. Then $w$ is a solution of $-\Delta w=w^5$. Since $W$ is radial, $w$ satisfies in addition the Neumann boundary condition \eqref{NBD}. This yields a non-scattering solution $w$ of \eqref{NLW}, \eqref{NBD} such that 
$$ \int_{\Omega} |\nabla w|^2=\frac{1}{2}\int_{\RR^3}|\nabla W|^2,\quad \mathscr{E}(\vec{w}(0))=\frac{1}{2}\mathscr{E}_{\RR^3}(W,0),$$
which proves that one cannot generalise Theorem \ref{thm_foc1} in this setting. Similarly, for $\eps>0$, the solution $w_{\eps}$ of \eqref{NLW}, \eqref{NBD} with initial data $((1+\eps)w,0)$ blows up in finite time by \cite{KeMe08}. This solution satisfies
$$ \int_{\Omega} |\nabla w|^2=\frac{(1+\eps)^2}{2}\int_{\RR^3}|\nabla W|^2,\quad \mathscr{E}(\vec{w}(0))<\frac{1}{2}\mathscr{E}_{\RR^3}(W,0),$$
which shows that the assumptions $\mathscr{E}(u_0,u_1)<\mathscr{E}_{\RR^3}(W,0)$, $\int_{\Omega} |\nabla u_0|^2<\int_{\RR^3} |\nabla W|^2$ is not sufficient to ensure global existence on the half-plane.

We now give a similar example when $\Omega$ is an exterior domain. Assume that $\Omega=\RR^3\setminus K$, where $K$ is bounded subset of $\RR^3$ with a smooth boundary $\partial K=\partial \Omega$ containing a portion of a plane. Without loss of generality, we can assume (translating and rescaling $\Omega$):
$$ \{0\}\times [-1,+1]^2\subset \partial \Omega, \quad  B(0,1)\cap \{ x_1>0\} \subset \Omega.$$
According to \cite{KrScTa09}, for all $\eps>0$, there exists a radial solution $z$ of the focusing critical wave equation on $\RR^3$, blowing-up in finite time $T>0$ and such that
$$ \limsup_{t\to T} \int_{\RR^3} \left|\nabla\left(z(t,x)-\frac{1}{t}W\left( \frac{x}{t^2} \right)\right)\right|^2+(\partial_tz(t,x))^2\,dx \leq \eps,\quad \mathscr{E}(\vec{z}(0))\leq \mathscr{E}(W,0)+\eps.$$
using finite speed of propagation, time translating and rescaling the solution, we can assume that the support of $\vec{z}(t)$ is included in $B(0,1)$ for all $t\in [0,T)$. The restriction $u$ of $z$ to $x_1>0$ is then a solution of \eqref{NLW}, \eqref{NBD}, \eqref{ID} that satisfies
$$ \mathscr{E}(u_0,u_1)\leq \frac{1}{2}\mathscr{E}_{\RR^3}(W,0)+\eps,\quad \limsup_{t\to T} \int_{\Omega} |\nabla u(t)|^2+\int_{\Omega} (\partial_tu(t))^2\leq \frac{1}{2}\int_{\RR^3} |\nabla W|^2+\eps,$$
proving that a generalization of Theorem \ref{thm_foc1} is hopeless in this setting also. 

In view of this example, we conjecture that Theorem \ref{thm_foc2} cannot be either generalised to other geometries, and that the radiality assumptions in Theorems \ref{thm_foc1} and \ref{thm_foc2} is also necessary. More precisely, a natural conjecture is that the energy threshold to ensure energy trapping and a blow-up scattering/dichotomy in the case of Neumann boundary condition is exactly $\frac{1}{2}\mathscr{E}_{\RR^3}(W,0)$. This is of course the case when $\Omega$ is a half-plane, since one can then use the result on $\RR^3$ after extending the solution by symmetry to the whole space.

\subsection*{Acknowledgements} The authors thank Fabrice Planchon for interesting discussions about the problem.



\bibliographystyle{amsalpha}
\bibliography{refs}

\end{document}